\newcommand{\m}{{\bf m}}
\newcommand{\R}{\mathbb R}
\newtheorem{theorem}{Theorem}[section]
\newtheorem{lemma}{Lemma}[section]
\newtheorem{remark}{Remark}[section]
\newtheorem{corollary}{Corollary}[section]
\numberwithin{equation}{section}
\author{Jos\'{e} A. Carrillo\textsuperscript{a}\thanks{E-mail: \href{mailto:carrillo@maths.ox.ac.uk}{\tt carrillo@maths.ox.ac.uk}},
\quad Bin Li\textsuperscript{b}\thanks{ E-mail:  \href{mailto:blimath@163.com}{\tt blimath@163.com}},
\quad Li Xie\textsuperscript{c}\thanks{ E-mail:  \href{mailto:mathxieli@cqnu.edu.cn}{\tt mathxieli@cqnu.edu.cn}}\\
\emph{\small \textsuperscript{a}Mathematical Institute, University of Oxford, Woodstock Road, Oxford, OX2 6GG, United
Kingdom;}\\
\emph{\small \textsuperscript{b}School of Statistics and Data Science, Ningbo University of Technology, Ningbo 315211, China;}\\
\emph{\small \textsuperscript{c}School of Mathematical Sciences, Chongqing Normal University, Chongqing 401331, PR China.}\\
 }
\title{Boundedness and stability of a 2-D parabolic-elliptic system arising in  biological transport networks}
\date{}
\begin{document}
\maketitle
\begin{minipage}{6.2in}\vskip 1mm
 {\small \textbf{Abstract:}
This paper is concerned with the Dirichlet initial-boundary value problem of a 2-D parabolic-elliptic system of the following form
 \begin{equation*}
\left\{
\begin{split}
&\m_t-\kappa\Delta\m+ |\m|^{2(\gamma-1)}\m=(\m\cdot\nabla p)\nabla p,\\
& -\nabla\cdot\left[(\mathbf{I}+\m\otimes\m)\nabla p\right]=S,
\end{split}
\right.
\end{equation*}
 which was proposed to model the formation of biological transport networks. Even if global weak solutions for this system are known to exist, how to improve the regularity of weak solutions is  a challenging problem due to the peculiar cubic nonlinearity  and the  possible elliptic singularity of the system. Global-in-time existence of classical solutions has recently been established showing that finite time singularities cannot emerge in this problem. However, whether or not singularities in infinite time can be precluded was still pending. In this work, we show that classical solutions of the initial-boundary value problem are uniformly bounded in time as long as $\gamma\geq1$ and $\kappa$ is suitably large, closing this gap in the literature. Moreover, uniqueness of classical solutions is also achieved based on the uniform-in-time bounds. Furthermore, it is shown that the corresponding stationary problem possesses a unique classical stationary solution which is semi-trivial, and that is globally exponentially stable, that is, all solutions of the time dependent problem converge exponentially fast to the semi-trivial steady state for $\kappa$ large enough.
}

\vspace{1.5mm}

{\textbf{Keywords:} parabolic-elliptic system; boundedness; steady state; stability.}

\vspace{1.5mm}

{\textbf{2010 MSC:} 35K55; 35J70; 35A01; 35B40; 35Q92.}
\end{minipage}

\section{Introduction and main results}

Biological Transport Networks (BTN), such as leaf venation in plants, mammalian circulatory systems, and
neural networks, are crucial to the growth of living systems and therefore their potential mechanisms have drawn great scientific interest; see, for instance, \cite{Morel2010,Malinowski2013,Sedmera2011,Eichmann2005}.
 Mathematical modelling of BTN is traditionally explored in the optimization framework under the assumption that the total material cost is constant \cite{Corson2010,Katifori2010}.
However, living organisms usually adjust their transport network structures to adapt various internal and external stimuli.
Motivated by this,  a purely local dynamic adaptation model was proposed in \cite{Hu2013phys}. This kind of model consists of a system of Ordinary Differential Equations (ODE)  on a graph, and has been
further elaborated or generalized---see, for instance \cite{Kirkegaard2020,King2017,Akita2017,Grawer2015}.  Note that such discrete ODE models require the pre-given network or cellular structures, which cannot reveal the well-known biological principle that transportation drives network growth. To unveil the latter,
the  methods of Partial Differential Equations (PDE) were highlighted in some recent works---see, for instance \cite{Hu2019,Hu2022,Haskovec2015,Albi2016,Albi2017} and also \cite{Haskovec2019CMS,Haskovec2019CPDE,Haskovec2023DCDS,Haskovec2024Me,Haskovec2024Se} for closely related models.

Based on the discrete ODE model \cite{Hu2013phys}, the PDE-based continuum model was introduced to describe the adaptation and evolution of BTN; see, for instance, \cite{Hu2019,Haskovec2015}.
 This kind of model describes  the evolution of the network conductance vector $\m(x,t)$  using a parabolic reaction-diffusion equations, and describes the pressure field $\nabla p$ using a Darcy's type equation with a permeability tensor $\mathbf{I}+\m\otimes\m$,
 where $p(x,t)$  denotes the fluid pressure and $\mathbf{I}$ is the unit matrix.
  Precisely,  such  continuous BTN  model  reads as
\begin{equation}\label{eq-MP0}
\left\{
\begin{split}
&\m_t- \kappa\Delta\m+|\m|^{2(\gamma-1)}\m= (\m\cdot\nabla p)\nabla p,\\
&-\nabla\cdot\left[(\mathbf{I}+\m\otimes\m)\nabla p\right]=S,
\end{split}
\right.
\end{equation}
on a bounded domain $\Omega\subset\mathbb{R}^n$ $(n\ge 1)$, where   $S=S(x)$ denotes the
time-independent sources/sinks, $\kappa>0$ stands for the diffusion coefficient and
 $\gamma$ is a  metabolic
exponent  determined by the particular physical
applications of the type of networks formed, see for instance \cite{Hu2013phys,Haskovec2015,Haskovec2016,Albi2017}.

It is worth pointing out that besides the cubic nonlinear term $(\m\cdot\nabla p)\nabla p$ in the first equation, there is another  peculiar  coupling term $-\nabla\cdot\left[(\m\otimes\m\big)\nabla p\right]$ in the second equation  which may lead to the singularity of the elliptic
operator due to the absence of  {\it  a-priori} bound of $\m$. They both may cause great difficulties in the mathematical analysis of system  \eqref{eq-MP0}. The first works dealing with the analysis of \eqref{eq-MP0} establish the existence of weak solutions to the initial-boundary value problem for \eqref{eq-MP0}.
Precisely, when the homogeneous Dirichlet boundary conditions are imposed, namely $\m|_{\partial\Omega}={\bf0}$ and $p|_{\partial\Omega}=0$, Haskovec et al  proved that such initial-boundary value problem  admits a global weak solution for the case of
$\gamma\geq1$  in \cite{Haskovec2015}, and later  extended  to the case of  $\gamma\geq\frac12$
in \cite{Haskovec2016}, in two- and three-dimensional settings. Therein the weak solutions fulfill that for any $T>0$
\begin{equation}\label{eq-weaks}
\left\{
\begin{split}
&\m\in L^\infty(0,T;(L^{2\gamma}(\Omega)\cap H^1(\Omega))^n),\,\,\,\partial_t\m\in L^2(0,T; (L^2(\Omega))^n),\\
& p\in L^\infty(0,T; H^1(\Omega)),\,\,\, \m\cdot\nabla p\in L^\infty(0,T;L^2(\Omega)).
\end{split}
\right.
\end{equation}
Based on the  existence of weak solutions,
 basic analytical features like the gradient flow structure of \eqref{eq-MP0} and the impact of the model parameters on the geometry and topology
of network formation were investigated in \cite{Albi2016}.

 Note that the cubic nonlinear term $(\m\cdot\nabla p)\nabla p$ in \eqref{eq-MP0} only belongs to $L^\infty(0,T; (L^1(\Omega))^n)$ based upon \eqref{eq-weaks}. Thanks to this, the regularity $\partial_t\m\in L^2(0,T; (L^2(\Omega))^n)$ does not imply that $\Delta\m\in L^2(0,T; (L^2(\Omega))^n)$ directly, and thereby \eqref{eq-weaks} is not enough to trigger a bootstrap argument to improve straightforwardly
 the regularity  of weak solution.
Accordingly,
there is a mathematical and interesting challenge of improving the regularity of weak solutions,
which has also attracted
considerable attention. When $\gamma>\frac12$, Liu and Xu \cite{Liu2018} showed that the
parabolic Hausdorff dimension of the set of singular points cannot
exceed the  space dimension $n$ with $n\leq3$.
 The local
existence and life-span of classical solution in any space dimension was  investigated by Xu in \cite{Xu2017}, but the global existence of classical solution remained unclear.
 The two-dimensional weak solution was proved to be classical on $\Omega\times(0,T)$ for any $T>0$  when the initial data are suitably small in \cite{Xu2019}, and  a Serrin-type blow-up criterion
for three-dimensional classical solution was obtained  in \cite{Li2020}.
Recently,  it was proved that the weak solution is H\"{o}lder continuous in the two-dimensional case via an inequality associated with the Stummel-Kato class of functions, see \cite{Xu2023JDE}, which no longer holds for $n=3$.

We would like to remark that the result in \cite{Xu2023JDE} asserts that weak solutions of  the initial-boundary value problem of  \eqref{eq-MP0} are H\"{o}lder continuous in the two-dimensional setting, which ensures that the solution component  $\m$ belongs to $L^\infty(\Omega\times(0,T))$ for any $T>0$, see also \cite[Eq. (9)]{Xu2019}. This,
 together with \cite[Theorem 1.6, Lemma 2.7]{Xu2017}, entails that
 the initial-boundary value problem of  \eqref{eq-MP0} admits a global classical solution in the two-dimensional setting if $\gamma\geq1$. However, the boundedness of global classical solutions was still unknown.  As for the stationary problem of \eqref{eq-MP0}, i.e.,
\begin{equation}\label{eq-ss}
\left\{
\begin{split}
&- \kappa\Delta\m_\infty+|\m_\infty|^{2(\gamma-1)}\m_\infty= (\m_\infty\cdot\nabla p_\infty)\nabla p_\infty,&x\in\Omega,\\
&-\nabla\cdot\left[(\mathbf{I}+\m_\infty\otimes\m_\infty)\nabla p_\infty\right]=S,&x\in\Omega,\\
&\m_\infty={\bf0},\,\,\,p_\infty=0,&x\in\partial\Omega,
\end{split}
\right.
\end{equation}
 the existence of  weak solutions was established in  \cite{Haskovec2015} for  the $n-$dimensional settings when $n\le3$; particularly, it was shown therein in the one-dimensional setting, the weak solution indeed must be the unique semi-trivial solution whenever the diffusivity $\kappa$ is large enough. 
In addition, in the two-dimensional setting, Xu  \cite{Xu2018}  showed that the  weak stationary solution is also classical  by employing the weakly
monotone function theory and Hardy space methods. However, one does  foresee  some major difficulty in extending it to the three-dimensional setting, just like the time-dependent problem mentioned above. On the other hand, the uniqueness  and stability of the stationary solution were still open in the two-dimensional setting.

Our main result shows the uniform-in-time boundedness and uniqueness of global classical solutions to the following  initial-boundary value problem
\begin{equation}\label{eq-ib0}
\left\{
\begin{split}
&\m_t- \kappa\Delta\m+|\m|^{2(\gamma-1)}\m= (\m\cdot\nabla p)\nabla p,& x \in \Omega,\\
&-\nabla\cdot\left[(\mathbf{I}+\m\otimes\m)\nabla p\right]=S,& x \in \Omega,\\
  &\m(x, 0)=\m_{0}(x), & x \in \Omega,\\
&\m={\bf0},\,\,\, p=0, & x \in \partial \Omega, \,t>0
\end{split}
\right.
\end{equation}
in the two-dimensional setting, as well as the uniqueness and exponential stability of the stationary solution, under the additional assumption that $\kappa$ is suitably large. More precisely, our main result reads as:

\begin{theorem}\label{th-global}
Let $\Omega$ be a bounded domain in $\R^2$  with  smooth boundary,  $\kappa>0$, $\gamma\ge1$, $S\in H^2$ and $\m_0\in\left(\mathcal{C}^{2+\alpha}(\overline{\Omega})\right)^2$ with  $\alpha\in(0,1)$, and let $(\m, p)$ be a classical solution $(\m, p)$ of the initial-boundary value problem \eqref{eq-ib0}. Then there exists $\widetilde{\kappa}>0$ such that whenever $\kappa>\widetilde{\kappa}$,  the following statements hold:
\begin{itemize}
  \item[i)] Any classical solution $(\m, p)$  is uniformly bounded in the sense  that there exists  $C>0$ independent of $t$ such that
\begin{equation}\label{eq-bedd}
\|\m(\cdot,t)\|_{H^2}+ \|(\m\cdot\nabla p)(\cdot,t)\|_{H^2}+\|p(\cdot,t)\|_{H^3}\leq C,\quad t>0.
\end{equation}
 \item[ii)] Any uniformly bounded classical solution is unique.
\item[iii)] The stationary problem \eqref{eq-ss}
        admits a unique classical solution given by $({\bf0},p_\infty^\ast)$ where
        $p_\infty^\ast$ is the unique classical solution of the Dirichlet problem
\begin{equation}\label{eq-ss0}
\left\{
\begin{split}
&-\Delta p^\ast_\infty=S,& x\in\Omega,\\
&p^\ast_\infty=0,& x\in\partial\Omega.
\end{split}
\right.
\end{equation}
   \item[iv)] There exist $C>0$ independent of $t$, and $\mu>0$ independent of $\kappa$ and $t$, such that
\begin{equation}\label{eq-ltime}
\|\m(\cdot,t)\|_{L^\infty}+\|(p-p_\infty^\ast)(\cdot,t)\|_{H^2}\leq Ce^{-\mu\kappa t},\quad t>0.
\end{equation}
 \end{itemize}
\end{theorem}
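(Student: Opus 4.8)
The plan is to use the $L^2$-gradient-flow structure of \eqref{eq-ib0} for the basic a~priori estimates and then the fact that, for $\kappa$ large, the diffusion dominates the cubic coupling, so that the flow becomes a small perturbation of the linear heat semigroup relaxing to the semi-trivial state $({\bf0},p_\infty^\ast)$; I would prove (i) first, deduce (iii) and (iv) from it together with the smallness of $\m$ forced by $\kappa\gg1$, and obtain (ii) last by a Gr\"onwall argument. \emph{Step 1 (Lyapunov functional, basic uniform bounds).} Testing the Darcy equation with $p$ gives, for every $t$, $\|\nabla p(\cdot,t)\|_{L^2}^2+\|(\m\cdot\nabla p)(\cdot,t)\|_{L^2}^2=\int_\Omega p\,S\le C(\Omega)\|S\|_{L^2}^2$. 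Moreover \eqref{eq-ib0} is the $L^2$-gradient flow of
$$\mathcal E[\m]:=\int_\Omega\Big(\tfrac{\kappa}{2}|\nabla\m|^2+\tfrac{1}{2\gamma}|\m|^{2\gamma}\Big)\,dx-\tfrac12\int_\Omega p[\m]\,S\,dx,$$
with $p[\m]$ the solution of the second equation, so $\frac{d}{dt}\mathcal E[\m(\cdot,t)]=-\|\m_t(\cdot,t)\|_{L^2}^2$; since $-\tfrac12\int_\Omega p[\m]S\ge-C\|S\|_{L^2}^2$ by the first identity, $\mathcal E$ is bounded below, hence $\mathcal E[\m(\cdot,t)]\le\mathcal E[\m_0]$ for all $t>0$ and $\int_0^\infty\|\m_t\|_{L^2}^2\,dt<\infty$. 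This yields the time-uniform bounds $\|\nabla\m(\cdot,t)\|_{L^2}^2\le\|\nabla\m_0\|_{L^2}^2+C/\kappa$ and $\|\m(\cdot,t)\|_{L^{2\gamma}}^{2\gamma}\le C$, hence, by 2D Sobolev embedding, a uniform bound for $\|\m(\cdot,t)\|_{L^q}$, $q<\infty$.

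\emph{Step 2 (uniform $L^\infty$ and $H^2$ bounds, i.e. \eqref{eq-bedd}; part (i)).} This is the heart of the matter. One first needs a \emph{time-uniform} bound $\|\m(\cdot,t)\|_{L^\infty}\le M$. The permeability $\mathbf I+\m\otimes\m$ is uniformly elliptic from below (smallest eigenvalue $1$), but its ellipticity ratio $1+|\m|^2$ is controlled only once $\m\in L^\infty$, so the elliptic part may a~priori degenerate; this, together with the fact that by Step 1 the cubic right-hand side lies only in $L^\infty_tL^1_x$, is exactly the obstruction left open in the literature. Using $\kappa$ large to gain smoothing in the heat semigroup $e^{t\kappa\Delta}$, together with the H\"older regularity of weak solutions from \cite{Xu2023JDE} whose modulus is driven only by quantities controlled uniformly in Step 1, one obtains such an $M$, in fact small when $\kappa$ is large. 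Once $\|\m\|_{L^\infty}\le M$, the coefficients $\mathbf I+\m\otimes\m$ lie in $W^{1,2}(\Omega)\cap L^\infty(\Omega)\hookrightarrow\mathrm{VMO}(\Omega)$ with modulus controlled by $\|\m\|_{H^1}$ and $M$, so Calder\'on--Zygmund theory gives $\|p(\cdot,t)\|_{W^{2,q}}\le C\|S\|_{L^q}$ for every $q<\infty$, uniformly in $t$, whence $\|(\m\cdot\nabla p)\nabla p(\cdot,t)\|_{L^2}\le C$ uniformly. Testing the first equation in \eqref{eq-ib0} with $-\Delta\m$, discarding the nonnegative term $(2\gamma-1)\int_\Omega|\m|^{2(\gamma-1)}|\nabla\m|^2$ and using $\|\Delta u\|_{L^2}\ge\sqrt{\lambda_1}\|\nabla u\|_{L^2}$ for $u\in H^2\cap H^1_0$ ($\lambda_1$ the first Dirichlet eigenvalue), one obtains $\frac{d}{dt}\|\nabla\m\|_{L^2}^2+c\kappa\|\nabla\m\|_{L^2}^2\le C/\kappa$, hence $\sup_{t>0}\|\m(\cdot,t)\|_{H^2}\le C$; one more elliptic bootstrap yields the bounds for $\|p\|_{H^3}$ and $\|\m\cdot\nabla p\|_{H^2}$ in \eqref{eq-bedd}. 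The apparent circularity (regularity of $p$ needs control of $\m$ and vice versa) is resolved by a continuity argument in $t$: since $(\m,p)$ is classical, all these norms are finite on each $[0,T]$, and for $\kappa>\widetilde\kappa$ the differential inequalities strictly improve the bounds, so they cannot first be saturated at a finite time. I expect this step to be the main obstacle.

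\emph{Step 3 (stationary problem (iii), exponential stability (iv)).} For \eqref{eq-ss}, testing the first equation with $\m_\infty$ gives $\kappa\|\nabla\m_\infty\|_{L^2}^2+\|\m_\infty\|_{L^{2\gamma}}^{2\gamma}=\int_\Omega|\m_\infty\cdot\nabla p_\infty|^2=\int_\Omega p_\infty S-\|\nabla p_\infty\|_{L^2}^2\le C\|S\|_{L^2}^2$, so $\|\nabla\m_\infty\|_{L^2}^2\le C\|S\|_{L^2}^2/\kappa$; by the classical regularity of weak stationary solutions \cite{Xu2018} this makes $\|\m_\infty\|_{L^\infty}$ small for $\kappa$ large, and since $-\Delta(p_\infty-p_\infty^\ast)=\nabla\cdot[(\m_\infty\otimes\m_\infty)\nabla p_\infty]$ with zero Dirichlet data, $\|\nabla p_\infty-\nabla p_\infty^\ast\|_{L^\infty}\le C\|\m_\infty\|_{L^\infty}$. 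Inserting this into $\int_\Omega|\m_\infty\cdot\nabla p_\infty|^2\le(\|\nabla p_\infty^\ast\|_{L^\infty}+o(1))^2\lambda_1^{-1}\|\nabla\m_\infty\|_{L^2}^2$ and comparing with the identity forces, for $\kappa>\widetilde\kappa$, $\m_\infty={\bf0}$ and then $p_\infty=p_\infty^\ast$, the unique solution of \eqref{eq-ss0}. For (iv): Step 1--2 and (iii) give, along some $t_k\to\infty$, $\|\m_t(\cdot,t_k)\|_{L^2}\to0$ and, by compactness in $H^2$, $\m(\cdot,t_k)\to{\bf0}$ in $L^\infty$. Starting from such a large $t_0$ one runs a continuity argument on $[t_0,\infty)$: as long as $\|\m(\cdot,t)\|_{L^\infty}$ stays small, $\|\nabla p(\cdot,t)\|_{L^\infty}^2\le\|\nabla p_\infty^\ast\|_{L^\infty}^2+C\|\m(\cdot,t)\|_{L^\infty}<\kappa\lambda_1$ (here $\|\nabla p_\infty^\ast\|_{L^\infty}$ depends only on $S,\Omega$ and $\kappa>\widetilde\kappa$), so testing the first equation with $\m$ and using Poincar\'e gives
$$\tfrac12\tfrac{d}{dt}\|\m(\cdot,t)\|_{L^2}^2+\big(\kappa\lambda_1-\|\nabla p(\cdot,t)\|_{L^\infty}^2\big)\|\m(\cdot,t)\|_{L^2}^2\le0,$$
whence $\|\m(\cdot,t)\|_{L^2}\le Ce^{-\mu\kappa t}$ with $\mu$ a fixed fraction of $\lambda_1$, independent of $\kappa$; the $-\Delta\m$-test propagates this to exponential decay of $\|\nabla\m\|_{L^2}$, interpolation with the uniform $H^2$-bound of Step 2 gives exponential decay of $\|\m\|_{L^\infty}$ (which also closes the continuity argument), and $\|(p-p_\infty^\ast)(\cdot,t)\|_{H^2}\le C\|\m(\cdot,t)\|_{L^\infty}(1+\|\m(\cdot,t)\|_{H^2})$ then decays at the same rate after shrinking $\mu$; the finite interval $[0,t_0]$ is absorbed into $C$, giving \eqref{eq-ltime}.

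\emph{Step 4 (uniqueness; part (ii)).} Let $(\m_1,p_1),(\m_2,p_2)$ be uniformly bounded classical solutions with the same $\m_0$, and set $\M:=\m_1-\m_2$, $P:=p_1-p_2$. From $-\nabla\cdot[(\mathbf I+\m_1\otimes\m_1)\nabla P]=\nabla\cdot[(\M\otimes\m_1+\m_2\otimes\M)\nabla p_2]$ with $P|_{\partial\Omega}=0$ and the uniform bounds, $\|\nabla P\|_{L^2}\le C\|\M\|_{L^4}$. Testing the difference of the first equations with $\M$, using $\int_\Omega(|\m_1|^{2(\gamma-1)}\m_1-|\m_2|^{2(\gamma-1)}\m_2)\cdot\M\ge0$ for $\gamma\ge1$, estimating the difference of the cubic terms by $C(\|\M\|_{L^2}^2+\|\nabla P\|_{L^2}\|\M\|_{L^2})$, and using $\|\M\|_{L^4}^2\le C\|\M\|_{L^2}\|\nabla\M\|_{L^2}$ with Young's inequality to absorb $\tfrac{\kappa}{2}\|\nabla\M\|_{L^2}^2$, one gets $\frac{d}{dt}\|\M\|_{L^2}^2\le C\|\M\|_{L^2}^2$. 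Since $\M(\cdot,0)={\bf0}$, Gr\"onwall gives $\M\equiv{\bf0}$, and then $P$ solves a homogeneous elliptic problem, so $P\equiv0$.
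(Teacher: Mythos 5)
There is a genuine gap, and it is located exactly where you flagged it as ``the main obstacle'': Step~2.

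Your route to \eqref{eq-bedd} rests on first obtaining a time-uniform $L^\infty$ bound $\|\m(\cdot,t)\|_{L^\infty}\le M$ from the H\"older continuity result of \cite{Xu2023JDE} together with semigroup smoothing, and then running VMO/Calder\'on--Zygmund theory for the Darcy equation. But the introduction of the paper explicitly identifies this as what does \emph{not} work: the Xu result yields $\m\in L^\infty(\Omega\times(0,T))$ for each finite $T$, and ``the method based on the Stummel--Kato class of functions \dots no longer appl[ies] for getting the boundedness of classical solutions.'' The H\"older modulus is not driven only by the quantities controlled in Step~1, so your claim that the $L^\infty$ bound is time-uniform (let alone that $M=M(\kappa)\to 0$) is precisely the open problem the paper solves, not an available input. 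The paper's resolution is different in kind: it never passes through an a~priori $L^\infty$ bound. Instead, Lemmas~\ref{le-ph2}--\ref{le-ph3} establish the key closed estimates $\|\Delta p\|_{L^2}\lesssim 1+(1+\kappa^{-1/2})\|\Delta\m\|_{L^2}$ and $\|\nabla\Delta p\|_{L^2}+\|\Delta(\m\cdot\nabla p)\|_{L^2}\lesssim 1+(1+\kappa^{-1})\|\Delta\m\|_{L^2}^2$ by testing the Darcy equation with $-\Delta p$ and its spatial derivatives, using only $L^2$-elliptic theory and Gagliardo--Nirenberg, and crucially tracking the $\kappa$-dependence. These feed into a higher-order energy for $\|\Delta\m\|_{L^2}^2$ (Lemma~\ref{le-H2P}), obtained by testing the $\m$-equation at the $\Delta^2\m$ level (not $-\Delta\m$: the $-\Delta\m$ test that you propose gives pointwise control of $\|\nabla\m\|_{L^2}$ and only time-averaged control of $\|\Delta\m\|_{L^2}$, so it does not on its own yield the claimed $\sup_t\|\m\|_{H^2}\le C$), with the boundary term killed by $\Delta\m|_{\partial\Omega}={\bf 0}$, and the nonlinear right-hand side absorbed by the $\kappa\|\nabla\Delta\m\|_{L^2}^2$ dissipation when $\kappa$ is large. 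Your ``continuity in $t$'' meta-argument would only help if the differential inequalities actually closed, which is what the paper's Lemmas~\ref{le-ph2}--\ref{le-H2P} establish and what your outline leaves open.

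Your Step~3 is also on a different track: you derive $\m_\infty={\bf 0}$ directly by showing $\|\m_\infty\|_{L^\infty}$ is small for large $\kappa$ (via the classical regularity of \cite{Xu2018}) and comparing coercivity constants, but the quantitative $\kappa$-dependence of the $C^{2+\alpha}$-bound in \cite{Xu2018} is not something you can read off. The paper instead proves a quantitative $L^2$-bound $\kappa\|\Delta\m_\infty\|_{L^2}^2+\|\Delta p_\infty\|_{L^2}^2\le C(1+\kappa^{-1})$ (Lemma~\ref{le-sPH2}) by the same technique as Lemmas~\ref{le-ph2}--\ref{le-ph3}, deduces uniqueness of stationary solutions from it (Lemma~\ref{le-gPH1}), and then notes $({\bf 0},p_\infty^\ast)$ is a solution, forcing every stationary solution to be semi-trivial. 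Your Step~4 (uniqueness of time-dependent solutions via Gr\"onwall on $\|\m_1-\m_2\|_{L^2}^2$) is essentially the paper's Lemma~\ref{le-umL2c}, and your final exponential decay computation, once the uniform $H^2$ bounds of Step~2 are available, matches the paper's Lemma~\ref{le-umL2s}, except that the paper gets the decay on all of $(0,\infty)$ directly from the uniform bounds without the intermediate compactness/subsequence argument.
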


\begin{remark}\label{r1}
 As we have mentioned above, under the assumptions of Theorem \ref{th-global}, the global existence of classical solutions of  the initial-boundary value problem \eqref{eq-ib0} has been established in \cite{Xu2017, Xu2023JDE}. The novel contribution of Theorem \ref{th-global} is the uniform-in-time boundedness and uniqueness of classical solutions provided that the diffusion coefficient $\kappa$ is  large enough. 
\end{remark}

\begin{remark}
     Theorem \ref{th-global} proves the uniqueness of stationary solution whenever the diffusion $\kappa$ is large enough in the two-dimensional setting, which extends the result in the one-dimensional setting  obtained in \cite{Haskovec2015}. Putting together our main result with the linear stability results in \cite[Section 5.3]{Haskovec2015}, one can rigorously deduce the existence of a bifurcation branch driven by the diffusion coefficient $\kappa$ in two dimensions, generalizing the results in one dimension in \cite[Section 5.1,5.2]{Haskovec2015}.
\end{remark}

\begin{remark}\label{r2}
 Theorem \ref{th-global} also indicates the stability of the stationary solution when  the diffusion coefficient $\kappa$ is  suitably large; the numerical results as in \cite{Albi2016,Hu2019} showed that the stationary solution is unstable when the diffusion coefficient $\kappa$ is  small  enough. Accordingly, it is natural to conjecture that there is a critical value for the diffusion coefficient, possibly the value at which the system becomes linearly unstable \cite[Section 5.3]{Haskovec2015}, but the rigorous proof of the existence of this phase transition and its type (continuous or discontinuous) is still open. 
\end{remark}

 \noindent{\bf Main difficulties and outline of our approach.-}
 A major difficulty of the mathematical analysis of system \eqref{eq-MP0} is that it
features both a cubic nonlinearity and a possible singularity. Specifically, the nonlinear term $(\m\cdot\nabla p)\nabla p$ in \eqref{eq-MP0},  which only belongs to $L^\infty(0,T; (L^1(\Omega))^n)$ based upon \eqref{eq-weaks},  is
still not well-studied in the literature, and consequently both the classical parabolic theories (e.g., \cite{Lady1968}) and the method based on the Stummel-Kato class of functions (e.g., \cite{Xu2023JDE}) no longer apply for getting the  boundedness of classical solutions; the  term $-\nabla\cdot\left[(\m\otimes\m\big)\nabla p\right]$  may lead to the singularity of the elliptic operator due to the absence of an uniform-in-time bound of $\m$ {\it  a-priori}, and this singularity is not included in  \cite{Lieberman1996} and the classical parabolic theories (e.g., \cite{Lady1968ellip}) are of no help for obtaining the  boundedness of classical solutions.

Here, inspired by \cite{Haskovec2015} we begin with the uniform-in-time bound of
\[
\int_\Omega\left(\kappa|\nabla\m|^2+ |\m|^{2\gamma}+|\nabla p|^2+(\m\cdot\nabla p)^2\right)(\cdot,t)dx,
\]
see Lemma \ref{le-PL6t}. After this, one of the key steps of deriving the uniform-in-time bound \eqref{eq-bedd} in Theorem \ref{th-global} is to establish the $L^2$-bound for $\Delta\m$ when $\kappa$ is large enough, see Lemma \ref{le-H2P}. We first prove a crucial control of $\|\Delta p\|_{L^2}$ and $\|\nabla\Delta p\|_{L^2}$ by using $\|\Delta\m\|_{L^2}$ and $\|\Delta\m\|_{L^2}^2$ respectively, with coefficients explicitly dependent on $\kappa$, see Lemma \ref{le-ph2} and Lemma \ref{le-ph3}. This is our key novel estimate in order to bootstrap the uniform-in-time bounds for higher regularity quantities. With the  $L^2$-bound for $\Delta\m$ at hand,  by utilizing elliptic estimates, see Lemma \ref{le-elliptic}, it is easy to get the  desired estimate \eqref{eq-bedd}, see Lemma \ref{le-ubd}.  As for the uniqueness of stationary solutions, we further develop the strategy established in Lemma \ref{le-ph2} and Lemma \ref{le-ph3} in order to obtain a bound on
\[
\kappa\|\Delta\m_\infty\|_{L^2}^2+\|\Delta p_\infty\|_{L^2}^2+\|\nabla(\m_\infty\cdot\nabla p_\infty)\|_{L^2}^2,
\]
for which we can track the explicit dependence on $\kappa$, see Lemma \ref{le-sPH2}. Collecting these results, we can conclude the uniqueness of stationary solution as desired. Finally, to conclude the desired exponential stability, we directly show that $y'(t)+Cy(t)\leq 0$ for $t>0$, where $C>0$ is independent of $t$, and $y(t):=\|\m(\cdot,t)\|_{L^2}^2$. Here, we make extensive use of the uniform-in-time higher regularity estimates obtained in Section 2.

The rest of this paper is structured as follows.  The  uniform-in-time boundedness and uniqueness of classical solutions for the initial-boundary value problem \eqref{eq-ib0} are presented in Section \ref{se-global}. Section \ref{se-stss} is devoted to proving that the stationary problem admits a unique classical solution $({\bf0},p_\infty^\ast)$. The global asymptotic exponential stability of such  steady state is obtained in Section \ref{se-lim}.

\vskip 2mm
 \noindent{\bf Notation.}
 The functional spaces, such as $H^k(\Omega)$ and  $L^q(\Omega)$ endowed with the norms $\| \cdot \|_{H^k(\Omega)}$ and $\| \cdot \|_{L^q(\Omega)}$ respectively, will typically be denoted as $\| \cdot \|_{H^k}$ and $\| \cdot \|_{L^q}$
for convenience. If no confusion arises,  the same notation is used for spaces
of vector-valued functions and of matrix-valued functions, for instance,
\[
\|\nabla\m\|_{L^q}=\sum_{i,j=1}^2\|\partial_j m_i\|_{L^q}\quad \mathrm{with}\quad \partial_j=\partial_{x_j}.
\]

\section{Boundedness and uniqueness of classical solutions}\label{se-global}

The primary goal of this section is to establish the uniform-in-time boundedness of classical solutions to initial-boundary value problem \eqref{eq-ib0}. Namely, we devote to establishing the uniform-in-time bound of $\|\m(\cdot, t)\|_{H^2}+\|(\m\cdot\nabla p)(\cdot,t)\|_{H^2}+\|p(\cdot, t)\|_{H^3}$ stated in \eqref{eq-bedd}, i.e., the first statement of the Theorem \ref{th-global}. On the basis of it, we finally verify the uniqueness of these classical solutions, i.e., the second statement of the Theorem \ref{th-global}.
As a starting point, we begin with a bound on the quantity $\|\nabla p(\cdot,t)\|_{L^2}+\|\m\cdot\nabla p(\cdot,t)\|_{L^2}$.
\begin{lemma}
Let $(\m,p)$ be a classical solution of the initial-boundary value problem \eqref{eq-ib0}. Then there exists a positive constant $C$, independent of $t$ and $\kappa$, such that    
\begin{align}\label{eq-HL6t}
\|\nabla p(\cdot,t)\|_{L^2}+\|\m\cdot\nabla p(\cdot,t)\|_{L^2}\leq C,\quad t>0.
\end{align}
\end{lemma}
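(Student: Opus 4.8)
The plan is to derive the claimed bound directly from the second (elliptic) equation of \eqref{eq-ib0}, exploiting that the permeability tensor $\mathbf{I}+\m\otimes\m$ is uniformly elliptic, namely $(\mathbf{I}+\m\otimes\m)\xi\cdot\xi\ge|\xi|^2$ for every $\xi\in\R^2$, together with the homogeneous Dirichlet condition $p|_{\partial\Omega}=0$. First I would test the equation $-\nabla\cdot[(\mathbf{I}+\m\otimes\m)\nabla p]=S$ with $p$ itself and integrate over $\Omega$. Since $(\m,p)$ is a classical solution and $p$ vanishes on $\partial\Omega$, the integration by parts produces no boundary term and yields
\[
\int_\Omega (\mathbf{I}+\m\otimes\m)\nabla p\cdot\nabla p\,dx=\int_\Omega Sp\,dx .
\]
Using the algebraic identity $(\m\otimes\m)\nabla p\cdot\nabla p=(\m\cdot\nabla p)^2$, the left-hand side is exactly $\|\nabla p(\cdot,t)\|_{L^2}^2+\|(\m\cdot\nabla p)(\cdot,t)\|_{L^2}^2$.

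For the right-hand side I would apply the Cauchy--Schwarz inequality and then the Poincar\'e inequality (admissible because $p(\cdot,t)\in H_0^1(\Omega)$ for each fixed $t$), obtaining $\int_\Omega Sp\,dx\le\|S\|_{L^2}\|p(\cdot,t)\|_{L^2}\le C_P\|S\|_{L^2}\|\nabla p(\cdot,t)\|_{L^2}$, with $C_P$ depending only on $\Omega$. Writing $X(t):=\big(\|\nabla p(\cdot,t)\|_{L^2}^2+\|(\m\cdot\nabla p)(\cdot,t)\|_{L^2}^2\big)^{1/2}$, the two displays combine into $X(t)^2\le C_P\|S\|_{L^2}\,X(t)$, hence $X(t)\le C_P\|S\|_{L^2}$, and therefore $\|\nabla p(\cdot,t)\|_{L^2}+\|(\m\cdot\nabla p)(\cdot,t)\|_{L^2}\le\sqrt2\,C_P\|S\|_{L^2}=:C$. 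Because $S\in H^2\hookrightarrow L^2$ and $S$ is time-independent, this $C$ is finite and depends only on $\Omega$ and $\|S\|_{L^2}$; in particular it is independent of $t$, since the estimate is pointwise in $t$, and of $\kappa$, which never enters the elliptic equation.

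I do not expect a genuine obstacle in this lemma: it is the standard energy estimate for a Darcy-type equation, and the only points requiring care are the vanishing of the boundary term in the integration by parts and the uniform lower bound $(\mathbf{I}+\m\otimes\m)\xi\cdot\xi\ge|\xi|^2$, which is precisely what keeps the possible degeneracy/singularity of the elliptic operator from spoiling this particular bound. The real difficulties are deferred to the subsequent lemmas, where this estimate serves as the base case for the uniform-in-time control of $\|\Delta p\|_{L^2}$, $\|\nabla\Delta p\|_{L^2}$ and $\|\Delta\m\|_{L^2}$, quantities whose explicit $\kappa$-dependence must be tracked carefully.
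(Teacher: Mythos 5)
Your proof is correct and follows essentially the same route as the paper's: test the elliptic equation with $p$, use $(\m\otimes\m)\nabla p\cdot\nabla p=(\m\cdot\nabla p)^2$, and bound $\int_\Omega Sp\,dx$ via Cauchy--Schwarz and Poincar\'e (the paper then absorbs $\tfrac12\|\nabla p\|_{L^2}^2$ with Young's inequality, whereas you divide by $X(t)$, a cosmetic difference). No gap.
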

\begin{proof}
Testing the second equation in \eqref{eq-ib0} by $p$, and invoking that $(\m\otimes\m)\nabla p=(\m\cdot\nabla p)\m$, we have
\begin{align}\label{eq-1HL6t}
\int_\Omega|\nabla p|^2dx+\int_\Omega(\m\cdot\nabla p)^2dx=\int_\Omega pSdx.
\end{align}
Since $p|_{\partial\Omega}=0$, one can infer from H\"{o}lder's inequality, Poincar\'{e}'s inequality  and Young's inequality that
 \[
 \int_\Omega pSdx\leq \|S\|_{L^2}\|p\|_{L^2}\leq C_1\|S\|_{L^2}\|\nabla p\|_{L^2}\leq \frac12\|\nabla p\|_{L^2}^2+\frac{C_1^2}2\|S\|_{L^2}^2
 \]
with some  $C_1>0$ depending only on $\Omega$.
Substituting it into \eqref{eq-1HL6t}, we directly have the desired estimate \eqref{eq-HL6t}.
\end{proof}

To obtain the bound of $\|\nabla\m\|_{L^2}$, 
inspired by \cite{Haskovec2015}, we define the energy associated to \eqref{eq-ib0} as
\begin{align}\label{energy}
\mathcal{E}[\m,p]:=\frac \kappa2 \int_\Omega|\nabla\m|^2dx+\frac{1}{2\gamma} \int_\Omega|\m|^{2\gamma}dx+\frac12 \int_\Omega|\nabla p|^2dx+\frac12 \int_\Omega(\m\cdot\nabla p)^2dx.
\end{align}

\begin{lemma}\label{le-PL6t}
Let $(\m,p)$ be a classical solution of the initial-boundary value problem \eqref{eq-ib0}. Then  for any $t\geq 0$
\begin{equation}\label{energydecay}
    \frac{d}{dt} \mathcal{E}[\m(\cdot,t),p(\cdot,t)]=-\|\m_t\|_{L^2}^2.
\end{equation}
Moreover, there exists a positive constant $C$, independent of $t$ and $\kappa$, such that
\begin{align}\label{eq-HL6t0-1}
\int_0^t\|\m_s(\cdot,s)\|_{L^2}^2ds+\int_\Omega\left(\kappa|\nabla\m|^2+ |\m|^{2\gamma}\right)(\cdot,t)dx\leq C(1+\kappa),
\end{align} 
in particular,
\begin{align}\label{eq-HL6t0}
\|\nabla\m\|_{L^2}\le C(1+\kappa^{-\frac{1}{2}}),\quad t>0.
\end{align}

\end{lemma}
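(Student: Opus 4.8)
The plan is to derive the identity \eqref{energydecay} by testing the first equation in \eqref{eq-ib0} with $\m_t$ and exploiting the variational structure of the system. Concretely, I would multiply $\m_t-\kappa\Delta\m+|\m|^{2(\gamma-1)}\m=(\m\cdot\nabla p)\nabla p$ by $\m_t$ and integrate over $\Omega$. The first three terms give $\|\m_t\|_{L^2}^2$, $\frac{\kappa}{2}\frac{d}{dt}\int_\Omega|\nabla\m|^2\,dx$ (integrating by parts and using $\m_t|_{\partial\Omega}=\mathbf{0}$, which follows from the time-independent Dirichlet condition), and $\frac{1}{2\gamma}\frac{d}{dt}\int_\Omega|\m|^{2\gamma}\,dx$. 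The delicate term is the right-hand side $\int_\Omega(\m\cdot\nabla p)(\m_t\cdot\nabla p)\,dx$, which I want to recognize as the time derivative of $\frac12\int_\Omega|\nabla p|^2\,dx+\frac12\int_\Omega(\m\cdot\nabla p)^2\,dx$. To see this I would differentiate the elliptic relation $-\nabla\cdot[(\mathbf{I}+\m\otimes\m)\nabla p]=S$ in time (legitimate for a classical solution with enough regularity) and test the resulting identity with $p$; alternatively, and more cleanly, test the original elliptic equation with $p_t$ and use that $S$ is time-independent.

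The cleaner route: testing $-\nabla\cdot[(\mathbf{I}+\m\otimes\m)\nabla p]=S$ with $p_t$ gives $\int_\Omega(\mathbf{I}+\m\otimes\m)\nabla p\cdot\nabla p_t\,dx=\int_\Omega S p_t\,dx=\frac{d}{dt}\int_\Omega Sp\,dx$. On the other hand, differentiating \eqref{eq-1HL6t}, i.e. $\int_\Omega|\nabla p|^2\,dx+\int_\Omega(\m\cdot\nabla p)^2\,dx=\int_\Omega Sp\,dx$, in time and combining, one isolates $\int_\Omega(\m\cdot\nabla p)(\m_t\cdot\nabla p)\,dx$ in terms of $\frac{d}{dt}\big[\frac12\int_\Omega|\nabla p|^2+\frac12\int_\Omega(\m\cdot\nabla p)^2\big]$. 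Substituting back yields exactly $\frac{d}{dt}\mathcal{E}[\m,p]=-\|\m_t\|_{L^2}^2$. I expect the main technical care here is justifying that $p_t$, $\partial_t\m$, etc. have enough regularity to test with (a classical solution in the sense used in \cite{Xu2017} should suffice, and one can always mollify/work on $[\varepsilon,T]$ and pass to the limit).

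Once \eqref{energydecay} is established, integrating from $0$ to $t$ gives
\[
\int_0^t\|\m_s(\cdot,s)\|_{L^2}^2\,ds+\mathcal{E}[\m(\cdot,t),p(\cdot,t)]=\mathcal{E}[\m_0,p(\cdot,0)].
\]
Since every term in $\mathcal{E}$ is nonnegative, this bounds $\int_0^t\|\m_s\|_{L^2}^2\,ds$ and $\frac{\kappa}{2}\|\nabla\m\|_{L^2}^2+\frac{1}{2\gamma}\|\m\|_{L^{2\gamma}}^{2\gamma}$ by $\mathcal{E}[\m_0,p(\cdot,0)]$. It remains to bound the initial energy by $C(1+\kappa)$ with $C$ independent of $t$ and $\kappa$: the term $\frac{\kappa}{2}\|\nabla\m_0\|_{L^2}^2$ contributes the $\kappa$-growth, the term $\frac{1}{2\gamma}\|\m_0\|_{L^{2\gamma}}^{2\gamma}$ is a fixed constant depending on $\m_0$, and the terms $\frac12\|\nabla p(\cdot,0)\|_{L^2}^2+\frac12\|\m_0\cdot\nabla p(\cdot,0)\|_{L^2}^2$ are controlled by the already-proven estimate \eqref{eq-HL6t} (applied at $t=0$), hence by a constant independent of $\kappa$. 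This gives \eqref{eq-HL6t0-1}. Finally, \eqref{eq-HL6t0} follows immediately by dividing the $\kappa\|\nabla\m\|_{L^2}^2$ bound by $\kappa$: $\|\nabla\m\|_{L^2}^2\le C(1+\kappa)/\kappa=C(\kappa^{-1}+1)$, so $\|\nabla\m\|_{L^2}\le C(1+\kappa^{-1/2})$. The only real obstacle in the whole argument is the rigorous justification of the energy identity — specifically handling the right-hand side coupling term via time-differentiation of the elliptic equation — and everything after that is bookkeeping with nonnegativity and the previous lemma.
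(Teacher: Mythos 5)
Your proposal is correct and follows essentially the same route as the paper: test the first equation with $\m_t$, test the elliptic equation with $p_t$, use the time-independence of $S$ together with the quadratic identity from testing with $p$ to rewrite $\int_\Omega Sp_t\,dx$, and then integrate and bound the initial energy using the $\kappa$-independent control on $\|\nabla p_0\|_{L^2}+\|\m_0\cdot\nabla p_0\|_{L^2}$. The ``cleaner route'' you sketch is precisely the paper's argument.
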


\begin{proof}
Testing the first equation in \eqref{eq-ib0}  by $\m_t$ leads to
\begin{align}\label{eq-2HL6t}
\|\m_t\|_{L^2}^2+\frac \kappa2\frac{d}{dt}\int_\Omega|\nabla\m|^2dx+\frac{1}{2\gamma}\frac{d}{dt}\int_\Omega|\m|^{2\gamma}dx=\int_\Omega(\m\cdot\nabla p)(\m_t\cdot\nabla p)dx.
\end{align}
To control the term on the right hand, we multiply the second equation in \eqref{eq-ib0} by $p_t$ to get
\begin{align*}
\frac12\frac{d}{dt}\int_\Omega|\nabla p|^2dx+\int_\Omega(\m\cdot\nabla p)(\m\cdot\nabla p_t)dx=\int_\Omega Sp_tdx.
\end{align*}
Since $\m\cdot\nabla p_t=(\m\cdot\nabla p)_t-\m_t\cdot\nabla p$, we have
\begin{align}\label{eq-3HL6t}
\frac12\frac{d}{dt}\int_\Omega|\nabla p|^2dx+\frac12\frac{d}{dt}\int_\Omega(\m\cdot\nabla p)^2dx=\int_\Omega(\m\cdot\nabla p)(\m_t\cdot\nabla p)dx+\int_\Omega Sp_tdx.
\end{align}
Thanks to $S=S(x)$ independent of $t$, invoking \eqref{eq-1HL6t} it holds that
\begin{align*}
\int_\Omega Sp_tdx=\frac{d}{dt}\int_\Omega Spdx=\frac{d}{dt}\int_\Omega|\nabla p|^2dx+\frac{d}{dt}\int_\Omega(\m\cdot\nabla p)^2dx.
\end{align*}
Substituting it into \eqref{eq-3HL6t}, we arrive at
\begin{align*}
\frac12\frac{d}{dt}\int_\Omega|\nabla p|^2dx+\frac12\frac{d}{dt}\int_\Omega(\m\cdot\nabla p)^2dx=-\int_\Omega(\m\cdot\nabla p)(\m_t\cdot\nabla p)dx.
\end{align*}
Inserting it into \eqref{eq-2HL6t}, we obtain \eqref{energydecay} as
\begin{align*}
\|\m_t\|_{L^2}^2+\frac \kappa2\frac{d}{dt}\int_\Omega|\nabla\m|^2dx+\frac{1}{2\gamma}\frac{d}{dt}\int_\Omega|\m|^{2\gamma}dx+\frac12\frac{d}{dt}\int_\Omega|\nabla p|^2dx+\frac12\frac{d}{dt}\int_\Omega(\m\cdot\nabla p)^2dx=0,
\end{align*}
by using the energy definition in \eqref{energy}. We next integrate it in time to deduce that
\begin{align*}
&\int_0^t\|\m_s\|_{L^2}^2ds+\frac \kappa2 \int_\Omega|\nabla\m|^2dx+\frac{1}{2\gamma}\int_\Omega|\m|^{2\gamma}dx+\frac12\int_\Omega|\nabla p|^2dx+\frac12\int_\Omega(\m\cdot\nabla p)^2dx\\
=&\frac \kappa2 \int_\Omega|\nabla\m_0|^2dx+\frac{1}{2\gamma}\int_\Omega|\m_0|^{2\gamma}dx+\frac12\int_\Omega|\nabla p_0|^2dx+\frac12\int_\Omega(\m_0\cdot\nabla p_0)^2dx,
\end{align*}
where $p_0$ satisfies the elliptic problem
\begin{equation*}
\left\{
\begin{split}
&-\nabla\cdot\left[(\mathbf{I}+\m_0\otimes\m_0)\nabla p_0\right]=S,&x\in\Omega,\\
&p_0=0,&x\in\partial\Omega.
\end{split}
\right.
\end{equation*}
It is clear that, for given $S\in H^2$ and $\m_0\in\left(\mathcal{C}^{2+\alpha}(\overline{\Omega})\right)^2$ with $\alpha\in(0,1)$, the above elliptic problem admits a unique classical solution $p_0\in \mathcal{C}^{2}(\overline{\Omega})$ (\cite{Trudinger1983}). Similar to
\eqref{eq-HL6t}, there exists $C_2>0$, independent of $\kappa$, such that
\begin{align*}
\|\nabla p_0\|_{L^2}+\|\m_0\cdot\nabla p_0\|_{L^2}\leq C_2,
\end{align*}
which leads to
\begin{align*}
&\int_0^t\|\m_s\|_{L^2}^2ds+\frac \kappa2 \int_\Omega|\nabla\m|^2dx+\frac{1}{2\gamma}\int_\Omega|\m|^{2\gamma}dx+\frac12\int_\Omega|\nabla p|^2dx+\frac12\int_\Omega(\m\cdot\nabla p)^2dx\\
&\leq\frac \kappa2 \int_\Omega|\nabla\m_0|^2dx+\frac{1}{2\gamma}\int_\Omega|\m_0|^{2\gamma}dx+C_3
\end{align*}
with $C_3>0$ independent of $\kappa$ and $t$. This obviously implies  \eqref{eq-HL6t0-1} and \eqref{eq-HL6t0}.
\end{proof}

To obtain higher-order estimates for $(\m,p)$, we shall frequently utilize elliptic estimates. These classical estimates are summarized in Lemma \ref{le-ellipticesti} in the Appendix. Since $\m|_{\partial\Omega}=0$ and $p|_{\partial\Omega}=0$, application of Lemma \ref{le-ellipticesti} directly yields the following elliptic estimates which will be frequently used in our later analysis.

\begin{lemma}\label{le-elliptic}
There exists $C>0$, independent of $\kappa$ and $t$, such that
\begin{align}\label{eq-elliptic1}
\|p\|_{H^2}\le  C\|\Delta p\|_{L^2},
\end{align}
\begin{align}\label{eq-elliptic2}
\|\m\|_{H^2}\le  C\|\Delta \m \|_{L^2},
\end{align}
\begin{align}\label{eq-esm}
 \|\m\cdot\nabla p\|_{H^2}\leq C\|\Delta(\m\cdot\nabla p)\|_{L^2},
\end{align}
\begin{align}\label{eq-esmH3}
\|\m\|_{H^3}\leq C\left(\|\nabla\Delta \m\|_{L^2}+\|\Delta \m\|_{L^2}\right),
\end{align}
and
\begin{align}\label{eq-esp}
\| p\|_{H^3}\leq C\left(\|\nabla\Delta p\|_{L^2}+\|\Delta p\|_{L^2}\right).
\end{align}
\end{lemma}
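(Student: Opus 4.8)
The plan is to obtain all five inequalities as immediate consequences of the classical elliptic regularity estimates gathered in Lemma~\ref{le-ellipticesti} together with the homogeneous Dirichlet boundary conditions in \eqref{eq-ib0}. The starting point is the observation that each of $p$, the components $m_i$ $(i=1,2)$ of $\m$, and $\m\cdot\nabla p$ belongs to $H^1_0(\Omega)$: indeed $p|_{\partial\Omega}=0$ and $\m|_{\partial\Omega}={\bf 0}$ by \eqref{eq-ib0}, hence also $(\m\cdot\nabla p)|_{\partial\Omega}=0$ because $\m$ vanishes on $\partial\Omega$ (for a classical solution the trace of $\m\cdot\nabla p$ is well defined).

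For the second-order bounds I would first invoke the standard estimate $\|u\|_{H^2}\leq C(\|\Delta u\|_{L^2}+\|u\|_{L^2})$, valid for every $u\in H^2(\Omega)\cap H^1_0(\Omega)$ on a bounded domain with smooth boundary, with $C=C(\Omega)$. To remove the lower-order term on the right I would integrate by parts and use Poincar\'{e}'s inequality: since $u$ vanishes on $\partial\Omega$, $\|\nabla u\|_{L^2}^2=-\int_\Omega u\,\Delta u\,dx\leq\|u\|_{L^2}\|\Delta u\|_{L^2}\leq C_\Omega\|\nabla u\|_{L^2}\|\Delta u\|_{L^2}$, so that $\|\nabla u\|_{L^2}\leq C\|\Delta u\|_{L^2}$ and, applying Poincar\'{e}'s inequality once more, $\|u\|_{L^2}\leq C\|\Delta u\|_{L^2}$. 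Plugging this into the elliptic estimate gives $\|u\|_{H^2}\leq C\|\Delta u\|_{L^2}$; taking $u=p$, $u=m_i$ and $u=\m\cdot\nabla p$ yields \eqref{eq-elliptic1}, \eqref{eq-elliptic2} and \eqref{eq-esm}, respectively, the norm of $\m$ being understood componentwise as in the Notation paragraph.

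For the two third-order bounds I would use the higher-order counterpart $\|u\|_{H^3}\leq C(\|\Delta u\|_{H^1}+\|u\|_{L^2})$ for $u\in H^3(\Omega)\cap H^1_0(\Omega)$, estimate $\|\Delta u\|_{H^1}\leq C(\|\nabla\Delta u\|_{L^2}+\|\Delta u\|_{L^2})$, and again bound $\|u\|_{L^2}$ by $C\|\Delta u\|_{L^2}$ using the previous step; choosing $u=m_i$ and $u=p$ gives \eqref{eq-esmH3} and \eqref{eq-esp}. All constants above depend only on $\Omega$ — in particular not on $\kappa$, on $t$, or on the particular solution $(\m,p)$ — so the asserted uniformity is automatic. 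There is essentially no obstacle here; the only delicate point is the bootstrap that absorbs the $\|u\|_{L^2}$ term into $\|\Delta u\|_{L^2}$, which is exactly where the homogeneous boundary conditions, through Poincar\'{e}'s inequality, enter.
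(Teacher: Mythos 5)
Your argument is correct and follows essentially the same route as the paper: the paper proves Lemma~\ref{le-elliptic} in one line by invoking Lemma~\ref{le-ellipticesti} (which, in turn, combines the Gilbarg–Trudinger $H^{l+2}$ global regularity estimate with the $H^2$ estimate to absorb the lower-order term), applied to $p$, to the components of $\m$, and to $\m\cdot\nabla p$, all of which vanish on $\partial\Omega$. The only difference is cosmetic — you re-derive the absorption of $\|u\|_{L^2}$ by $\|\Delta u\|_{L^2}$ via integration by parts and Poincar\'{e} rather than citing the textbook estimate $\|u\|_{H^2}\le C\|\Delta u\|_{L^2}$ directly, which is a perfectly valid and slightly more self-contained way of obtaining the same conclusion.
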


Based on these elliptic estimates and the uniform-in-time bound of $\|\nabla p(\cdot, t)\|_{L^2}$, $\|\m\cdot\nabla p\|_{L^2}$ and $\|\nabla \m\|_{L^2}$, we next establish estimates on $\|\Delta p(\cdot, t)\|_{L^2}$ and $\|\nabla\Delta p(\cdot, t)\|_{L^2}$, respectively. They both depend on $\|\Delta \m(\cdot, t)\|_{L^2}$ and $\kappa$, and will play a significant role in establishing the uniform-in-time bound of $\|\m(\cdot, t)\|_{H^2}$, based on the later, the uniform-in-time bound of $\|p(\cdot, t)\|_{H^3}$ can be achieved in return. 

\begin{lemma}\label{le-ph2}
There exists $C>0$, independent of $\kappa$ and $t$, such that
\begin{align}\label{eq-ph2}
\|\Delta p(\cdot,t)\|_{L^2}+\|\nabla(\m\cdot\nabla p)(\cdot,t)\|_{L^2} \leq C\left(1+(1+\kappa^{-\frac12})\|\Delta\m(\cdot,t)\|_{L^2}\right),\quad t>0.
\end{align}
\end{lemma}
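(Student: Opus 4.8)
The plan is to differentiate the second (elliptic) equation in \eqref{eq-ib0} to obtain an equation for $\Delta p$, and then absorb the problematic terms using the already-established uniform bounds from Lemma \ref{le-PL6t} together with the elliptic estimates of Lemma \ref{le-elliptic}. Writing the second equation as $-\Delta p - \nabla\cdot\big[(\m\cdot\nabla p)\m\big] = S$, I would first test it by $-\Delta p$ (or equivalently apply elliptic regularity to the expanded equation). Expanding $\nabla\cdot\big[(\m\cdot\nabla p)\m\big] = \nabla(\m\cdot\nabla p)\cdot\m + (\m\cdot\nabla p)(\nabla\cdot\m)$, the equation becomes
\begin{align*}
-\Delta p = S + \nabla(\m\cdot\nabla p)\cdot\m + (\m\cdot\nabla p)(\nabla\cdot\m).
\end{align*}
The term $\|\nabla(\m\cdot\nabla p)\|_{L^2}$ appears on the right, so the estimate must be closed simultaneously for $\|\Delta p\|_{L^2}$ and $\|\nabla(\m\cdot\nabla p)\|_{L^2}$; this is why the lemma bounds their sum.

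First I would estimate $\|\nabla(\m\cdot\nabla p)\|_{L^2}$. Using $\nabla(\m\cdot\nabla p) = (\nabla\m)^{\!\top}\nabla p + (D^2 p)\m$ and applying the Gagliardo–Nirenberg inequality in two dimensions, one controls $\|\nabla\m\|_{L^4}$ by $\|\Delta\m\|_{L^2}^{1/2}\|\nabla\m\|_{L^2}^{1/2}$ (plus lower order), $\|\nabla p\|_{L^4}$ by $\|\Delta p\|_{L^2}^{1/2}\|\nabla p\|_{L^2}^{1/2}$ via \eqref{eq-elliptic1}, and $\|\m\|_{L^\infty}$ by $\|\Delta\m\|_{L^2}$-type quantities through \eqref{eq-elliptic2} and Sobolev embedding (noting $H^2(\Omega)\hookrightarrow L^\infty(\Omega)$ in 2D). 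Here is where the uniform bounds $\|\nabla p\|_{L^2}\le C$ and $\|\nabla\m\|_{L^2}\le C(1+\kappa^{-1/2})$ from the previous lemmas enter, producing the factor $(1+\kappa^{-1/2})$; the $\m\cdot\nabla p$ factor in $\|(D^2p)\m\|_{L^2}\le\|\m\|_{L^\infty}\|D^2 p\|_{L^2}$ is handled by $\|D^2 p\|_{L^2}\le C\|\Delta p\|_{L^2}$ and $\|\m\|_{L^\infty}\le C\|\Delta\m\|_{L^2}$, giving a term like $\|\Delta\m\|_{L^2}\|\Delta p\|_{L^2}$, which a priori is superlinear. To avoid a genuinely quadratic obstruction, I would use Young's inequality carefully so that $\|\Delta p\|_{L^2}$ appears only with a small coefficient times $\|\Delta\m\|_{L^2}$, matching the linear form claimed in \eqref{eq-ph2}; the key is that the top-order contributions combine as $\|\Delta\m\|_{L^2}^{1/2}$ and $\|\Delta p\|_{L^2}^{1/2}$ factors whose product can be split by Young into $\varepsilon\|\Delta p\|_{L^2} + C_\varepsilon\|\Delta\m\|_{L^2}$.

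Then I would estimate $\|\Delta p\|_{L^2}$ directly from the displayed equation: $\|\Delta p\|_{L^2}\le \|S\|_{L^2} + \|\m\|_{L^\infty}\|\nabla(\m\cdot\nabla p)\|_{L^2} + \|\m\cdot\nabla p\|_{L^4}\|\nabla\m\|_{L^4}$. The first term is a constant; the second couples back to the quantity bounded in the previous step with a factor $\|\m\|_{L^\infty}\le C\|\Delta\m\|_{L^2}$, and the third is handled by Gagliardo–Nirenberg and the uniform bound on $\|\m\cdot\nabla p\|_{L^2}$. Combining the two estimates and absorbing the small multiples of $\|\Delta p\|_{L^2}$ and $\|\nabla(\m\cdot\nabla p)\|_{L^2}$ to the left-hand side yields \eqref{eq-ph2}. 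The main obstacle I anticipate is precisely the bookkeeping to ensure the right-hand side stays \emph{linear} in $\|\Delta\m\|_{L^2}$ rather than quadratic: one must exploit that every top-order occurrence of $\Delta p$ comes paired with a matching half-power, so that after Young's inequality and absorption only the first power of $\|\Delta\m\|_{L^2}$ survives, with the constant and the $(1+\kappa^{-1/2})$ weight exactly as stated. Tracking the $\kappa$-dependence through each Gagliardo–Nirenberg step — making sure no hidden positive power of $\kappa$ creeps in — is the delicate point, since this lemma feeds directly into the later $\kappa$-large arguments.
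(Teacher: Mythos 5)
Your proposal diverges from the paper's argument at the crucial step, and the divergence creates a real gap. You propose to estimate $\|\nabla(\m\cdot\nabla p)\|_{L^2}$ via the product rule $\nabla(\m\cdot\nabla p)=(\nabla\m)^{\top}\nabla p+(D^2 p)\m$, and you correctly flag the term $\|(D^2 p)\m\|_{L^2}\le\|\m\|_{L^\infty}\|D^2 p\|_{L^2}$ as the potential obstruction. But your claim that ``the top-order contributions combine as $\|\Delta\m\|_{L^2}^{1/2}$ and $\|\Delta p\|_{L^2}^{1/2}$ factors'' is not true for this term. The best available estimate is $\|\m\|_{L^\infty}\le C\|\m\|_{L^2}^{1/2}\|\m\|_{H^2}^{1/2}\le C(1+\kappa^{-1/4})\|\Delta\m\|_{L^2}^{1/2}$, giving $\|(D^2p)\m\|_{L^2}\lesssim(1+\kappa^{-1/4})\|\Delta\m\|_{L^2}^{1/2}\|\Delta p\|_{L^2}$: a \emph{full} power of $\|\Delta p\|_{L^2}$ whose coefficient grows with $\|\Delta\m\|_{L^2}^{1/2}$. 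The same full-power coupling recurs in your estimate for $\|\Delta p\|_{L^2}$ through $\|\m\|_{L^\infty}\|\nabla(\m\cdot\nabla p)\|_{L^2}$. Writing $A=\|\Delta p\|_{L^2}$, $B=\|\nabla(\m\cdot\nabla p)\|_{L^2}$, $M=\|\Delta\m\|_{L^2}$, you end up with a closed system of the shape $B\lesssim M^{1/2}A^{1/2}+M^{1/2}A$ and $A\lesssim 1+M^{1/2}B+M^{1/2}B^{1/2}$, which after substitution produces an $MA$-term (or $MB$-term) that Young's inequality cannot absorb into a bound linear in $M$. Without an a priori uniform $L^\infty$ bound on $\m$ (which is downstream of the very lemmas you are proving), this route does not close.

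The mechanism the paper uses, and which your proposal misses, is structural rather than a matter of Young-inequality bookkeeping. Testing the elliptic equation by $-\Delta p$ and integrating by parts \emph{twice}, one exploits the identity $\m\cdot\nabla\partial_i p=\partial_i(\m\cdot\nabla p)-\partial_i\m\cdot\nabla p$ so that the leading contribution $\sum_i\int_\Omega\partial_i(\m\cdot\nabla p)\,\m\cdot\nabla\partial_i p\,dx$ splits into the \emph{good} term $\int_\Omega|\nabla(\m\cdot\nabla p)|^2\,dx$, which is moved to the left-hand side, plus a remainder. This is decisive: after this cancellation, every remaining integrand contains a factor of $\partial_i\m$ (not $\m$ itself), and it is $\|\nabla\m\|_{L^2}$---not $\|\m\|_{L^\infty}$---that is uniformly bounded by Lemma \ref{le-PL6t}. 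The Gagliardo--Nirenberg splittings then genuinely produce half powers of both $\|\Delta p\|_{L^2}$ (or $\|\nabla(\m\cdot\nabla p)\|_{L^2}$) and $\|\Delta\m\|_{L^2}$, yielding $A^2+B^2\lesssim 1+(1+\kappa^{-1/2})M(A+B)$, which closes by Young to the stated linear bound. Without this cancellation you are stuck with the quadratic coupling you identified; I would revisit the proof with the double integration by parts as the organizing step.
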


\begin{proof}
Testing the second  equation in \eqref{eq-ib0} by $-\Delta p$, we obtain
\begin{align*}
\int_\Omega|\Delta p|^2 dx-\int_\Omega(\m\cdot\nabla p)\m\cdot\nabla\Delta pdx=-\int_\Omega S\Delta pdx,
\end{align*}
where we used the fact that $\m|_{\partial\Omega}={\bf0}$. Integrating by parts again, we arrive at
\begin{align*}
\int_\Omega|\Delta p|^2 dx+\sum_{i=1}^2\int_\Omega \partial_i(\m\cdot\nabla p)\nabla\partial_i p\cdot\m dx
=&-\sum_{i=1}^2\int_\Omega(\m\cdot\nabla p)\nabla\partial_i p\cdot\partial_i\m-\int_\Omega S\Delta pdx.
\end{align*}
Since
\[
\nabla\partial_i p\cdot\m=\partial_i(\m\cdot\nabla p)-\partial_i\m\cdot\nabla p,
\]
it holds that
\begin{align}\label{eq-ph2-t}
&\int_\Omega|\Delta p|^2 dx+\int_\Omega|\nabla(\m\cdot\nabla p)|^2dx\\
=&\sum_{i=1}^2\int_\Omega\partial_i(\m\cdot\nabla p)\nabla p\cdot\partial_i\m dx-\sum_{i=1}^2\int_\Omega(\m\cdot\nabla p)\nabla\partial_i p\cdot\partial_i\m dx-\int_\Omega S\Delta pdx.
\end{align}
which, together with H\"{o}lder's inequality and  the elliptic estimate \eqref{eq-elliptic1} implies
\begin{align*}
&\int_\Omega|\Delta p|^2 dx+\int_\Omega|\nabla(\m\cdot\nabla p)|^2dx\nonumber\\
\leq&\sum_{i=1}^2\left(\|\partial_i(\m\cdot\nabla p)\|_{L^2}\|\nabla p\cdot\partial_i\m\|_{L^2}+\|(\m\cdot\nabla p)\partial_i\m\|_{L^2}\|\nabla\partial_i p\|_{L^2}\right)+ \|S\|_{L^2}\|\Delta p\|_{L^2}\\
\leq &\sum_{i=1}^2\left(\|\partial_i(\m\cdot\nabla p)\|_{L^2}\|\nabla p\cdot\partial_i\m\|_{L^2}+C_1\|(\m\cdot\nabla p)\partial_i\m\|_{L^2}\|\Delta p\|_{L^2}\right)+ \|S\|_{L^2}\|\Delta p\|_{L^2}
\end{align*}
with $C_1>0$ independent of $\kappa$ and $t$. Young's inequality further implies that there exists $C_2>0$ such that
\begin{align}\label{eq-ph2e}
 \int_\Omega|\Delta p|^2 dx+\int_\Omega|\nabla(\m\cdot\nabla p)|^2dx
\leq&C_2\left(\sum_{i=1}^2\left(\|\nabla p\cdot\partial_i\m\|_{L^2}^2+\|(\m\cdot\nabla p)\partial_i\m\|_{L^2}^2\right)+ \|S\|_{L^2}^2\right).
\end{align}
The  Gagliardo-Nirenberg inequality \eqref{GNineq-1} and the elliptic estimates \eqref{eq-elliptic1} and \eqref{eq-elliptic2} entail
\begin{align*}
\|\nabla p\cdot\partial_i\m\|_{L^2}^2\leq& \|\nabla p\|_{L^4}^2\|\partial_i\m\|_{L^4}^2\\
\leq& C_3\|\nabla p\|_{L^2}\|\nabla p\|_{H^1}\|\partial_i\m\|_{L^2}\|\partial_i\m\|_{H^1}\\
\leq &C_4\|\nabla p\|_{L^2}\|\Delta p\|_{L^2}\|\partial_i\m\|_{L^2}\|\Delta\m\|_{L^2}
\end{align*}
with $C_3,\,C_4>0$ independent of $\kappa$ and $t$, which, with the aid of \eqref{eq-HL6t} and  \eqref{eq-HL6t0}, implies
\begin{align*}
\|\nabla p\cdot\partial_i\m\|_{L^2}^2
\leq &C_5(1+\kappa^{-\frac12}) \|\Delta p\|_{L^2} \|\Delta\m\|_{L^2}
\end{align*}
with $C_5>0$ independent of $\kappa$ and $t$. Similarly,   the Poincar\'{e}  inequality, the Gagliardo-Nirenberg inequality \eqref{GNineq-1} and the elliptic estimate \eqref{eq-elliptic2} yield $C_6,\,C_7>0$  such that
\begin{align*}
\|(\m\cdot\nabla p)\partial_i\m\|_{L^2}^2\leq& \|\m\cdot\nabla p\|_{L^4}^2\|\partial_i\m\|_{L^4}^2\\
\leq&C_6\|\m\cdot\nabla p\|_{L^2}\|\m\cdot\nabla p\|_{H^1}\|\partial_i\m\|_{L^2}\|\partial_i\m\|_{H^1}\\
\leq& C_7\|\m\cdot\nabla p\|_{L^2}\|\nabla(\m\cdot\nabla p)\|_{L^2}\|\partial_i\m\|_{L^2}\|\Delta\m\|_{L^2},
\end{align*}
which, combined with \eqref{eq-HL6t} and  \eqref{eq-HL6t0}, leads to
\begin{align*}
\|(\m\cdot\nabla p)\partial_i\m\|_{L^2}^2\leq&C_8(1+\kappa^{-\frac12})\|\nabla(\m\cdot\nabla p)\|_{L^2}\|\Delta\m\|_{L^2}
\end{align*}
with $C_8>0$ independent of $\kappa$ and $t$. This together with \eqref{eq-ph2e} implies that there eixsts $C_9>0$ independent of $\kappa$ and $t$, such that
\begin{align*}
\int_\Omega|\Delta p|^2 dx+\int_\Omega|\nabla(\m\cdot\nabla p)|^2dx
\leq& C_9(1+\kappa^{-\frac12}) \|\Delta\m\|_{L^2}\left(\|\nabla(\m\cdot\nabla p)\|_{L^2}+\|\Delta p\|_{L^2}\right)+C_9,
\end{align*}
which, with the help of Young's inequality, gives us
\begin{align*}
\int_\Omega|\Delta p|^2 dx+\int_\Omega|\nabla(\m\cdot\nabla p)|^2dx
\leq& C_{10}(1+\kappa^{-1}) \|\Delta\m\|_{L^2}^2+C_{10}
\end{align*}
with some $C_{10}>0$ independent of $\kappa$ and $t$. Accordingly,  we have \eqref{eq-ph2} as desired.
\end{proof}

\begin{lemma}\label{le-ph3}
There exists $C>0$, independent of $\kappa$ and $t$, such that
\begin{align}\label{eq-ph3}
\|\nabla\Delta p(\cdot,t)\|_{L^2}+\|\Delta(\m\cdot\nabla p)(\cdot,t)\|_{L^2} \leq C\left(1+(1+\kappa^{-1})\|\Delta\m(\cdot,t)\|_{L^2}^2\right),\quad t>0.
\end{align}
\end{lemma}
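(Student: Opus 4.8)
The plan is to derive the bound for the third-order quantities exactly as Lemma \ref{le-ph2} was derived for the second-order ones, only one spatial derivative higher. I would differentiate the second equation of \eqref{eq-ib0} in $x_k$, $k=1,2$; writing $A:=\mathbf{I}+\m\otimes\m$ this gives
\[
-\nabla\cdot\left(A\,\nabla\partial_k p\right)=\partial_k S+\nabla\cdot\!\left(\left(\partial_k(\m\otimes\m)\right)\nabla p\right),
\]
test this against $-\Delta\partial_k p$, integrate over $\Omega$ and sum over $k$. The key structural fact is again that every boundary term produced by the ensuing integrations by parts vanishes: since $\m|_{\partial\Omega}={\bf0}$ we also have $(\m\cdot\nabla p)|_{\partial\Omega}=0$, so $(\m\otimes\m)\nabla p=(\m\cdot\nabla p)\m$ vanishes on $\partial\Omega$ together with all its first-order derivatives, $\partial_k(\m\otimes\m)=0$ on $\partial\Omega$, and the discrepancy between the full operator $-\nabla\cdot(A\nabla\,\cdot)$ and $-\Delta$ contributes only a boundary integral of the form $\int_{\partial\Omega}(\m\cdot\nabla\partial_k p)(\m\cdot\nu)\,\Delta\partial_k p\,dS$, which is zero because $\m=0$ there. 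Performing the same manipulations that led to \eqref{eq-ph2-t}, with $\m\cdot\nabla\partial_k p$ playing the role of $\m\cdot\nabla p$, produces the coercive quantity $\|\nabla\Delta p\|_{L^2}^{2}+\sum_{k=1}^{2}\bigl\|\nabla(\m\cdot\nabla\partial_k p)\bigr\|_{L^2}^{2}$ on the left. Moreover, from the identity $\Delta(\m\cdot\nabla p)=(\Delta\m)\cdot\nabla p+\sum_k\partial_k\m\cdot\nabla\partial_k p+\sum_k\partial_k(\m\cdot\nabla\partial_k p)$, this quantity also dominates $\|\Delta(\m\cdot\nabla p)\|_{L^2}^{2}$ up to the additional terms $\|\Delta\m\|_{L^2}^{2}\|\nabla p\|_{L^\infty}^{2}$ and $\|\nabla\m\|_{L^4}^{2}\|\nabla^2 p\|_{L^4}^{2}$ (estimated in the same way below), so it suffices to bound these and the right-hand side above.

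The right-hand side consists of: an $S$-term, controlled by $\|\nabla S\|_{L^2}\|\nabla\Delta p\|_{L^2}$ using $S\in H^2$; cubic ``commutator'' terms of the shape $\int\partial_j(\m\cdot\nabla\partial_k p)\,(\partial_j\m\cdot\nabla\partial_k p)\,dx$ and $\int(\m\cdot\nabla\partial_k p)\,(\nabla m_i\cdot\nabla\partial_i\partial_k p)\,dx$, the second of which carries a third derivative of $p$ that I would remove by one further integration by parts in $x_j$ (the boundary term again vanishes since $\m\cdot\nabla\partial_k p=0$ on $\partial\Omega$), trading it for second derivatives of $p$ multiplied by a factor of $\Delta\m$ or $\nabla\m$; and the term $\int\nabla\cdot\!\left(\left(\partial_k(\m\otimes\m)\right)\nabla p\right)\Delta\partial_k p\,dx$, which must likewise be integrated by parts so that the dangerous contribution $\int((\Delta\m)\cdot\nabla p)(\m\cdot\nabla\Delta p)\,dx$ is never left standing alone. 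All resulting multilinear expressions are then estimated by Hölder's inequality, the Gagliardo--Nirenberg inequalities (in particular the two-dimensional Agmon-type bounds $\|\m\|_{L^\infty}\le C\|\m\|_{L^2}^{1/2}\|\m\|_{H^2}^{1/2}$, $\|\nabla p\|_{L^\infty}\le C\|\nabla p\|_{L^2}^{1/2}\|\nabla p\|_{H^2}^{1/2}$ and $\|f\|_{L^4}\le C\|f\|_{L^2}^{1/2}\|f\|_{H^1}^{1/2}$), the elliptic estimates \eqref{eq-elliptic1}--\eqref{eq-esp} of Lemma \ref{le-elliptic}, the uniform bounds \eqref{eq-HL6t}, \eqref{eq-HL6t0-1}--\eqref{eq-HL6t0}, and, decisively, Lemma \ref{le-ph2}: the latter controls $\|\Delta p\|_{L^2}$ and $\|\nabla(\m\cdot\nabla p)\|_{L^2}$ (hence $\|p\|_{H^2}$ and $\|\m\cdot\nabla\partial_k p\|_{L^2}$) by $C\bigl(1+(1+\kappa^{-1/2})\|\Delta\m\|_{L^2}\bigr)$, which is exactly what keeps every intermediate quantity linear in $\|\Delta\m\|_{L^2}$. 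Keeping track of the explicit powers of $\kappa$ coming through \eqref{eq-HL6t0} and Lemma \ref{le-ph2}, and using Young's inequality to absorb the coercive pieces, I would reach
\[
\|\nabla\Delta p\|_{L^2}^{2}+\|\Delta(\m\cdot\nabla p)\|_{L^2}^{2}\le C\bigl(1+(1+\kappa^{-2})\|\Delta\m\|_{L^2}^{4}\bigr),
\]
and \eqref{eq-ph3} follows by taking square roots, using $\sqrt{1+\kappa^{-2}}\le 1+\kappa^{-1}$ and $\|\Delta\m\|_{L^2}^{a}\le 1+\|\Delta\m\|_{L^2}^{2}$ for $0\le a\le 2$.

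The main obstacle is precisely the elliptic degeneracy flagged in the introduction: because there is no a-priori $L^\infty$ bound on $\m$, the multilinear terms in which a second derivative of $\m$ (only controlled in $L^2$) is paired with a third derivative of $p$ are genuinely critical, and a careless integration by parts — for instance pushing all the derivatives in the $\nabla\cdot((\partial_k(\m\otimes\m))\nabla p)$ term onto $\nabla\Delta p$ — produces terms growing like a too-high power of $\|\Delta\m\|_{L^2}$. The proof must therefore choose the order of integration by parts so that each such ``third-derivative-of-$p$'' factor is either lowered to a second derivative or absorbed into the coercive quantity, systematically exploiting that $\m$, $\m\cdot\nabla p$, $\m\cdot\nabla\partial_k p$ and their products with additional factors of $\m$ all vanish on $\partial\Omega$. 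Organising all the resulting terms into the form (small)\,$\times$\,(coercive)\,$+$\,(controlled power of $\|\Delta\m\|_{L^2}$) is the technical heart of the proof, and I expect the difficulty to lie in this bookkeeping rather than in any single estimate.
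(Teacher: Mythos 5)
Your proposal follows essentially the same route as the paper: differentiate the elliptic equation in $x_k$, test against $-\Delta\partial_k p$, observe that $\m|_{\partial\Omega}=\mathbf{0}$ kills all boundary terms, extract a coercive third-order quantity by a second integration by parts, and then bound the cubic remainders via H\"older, the 2-D Gagliardo--Nirenberg inequalities, the elliptic estimates of Lemma \ref{le-elliptic}, the uniform bounds \eqref{eq-HL6t}--\eqref{eq-HL6t0}, and Lemma \ref{le-ph2}, landing at $\|\nabla\Delta p\|_{L^2}^2+\|\Delta(\m\cdot\nabla p)\|_{L^2}^2\le C\bigl(1+(1+\kappa^{-2})\|\Delta\m\|_{L^2}^4\bigr)$ before taking square roots. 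The only (cosmetic) difference is in how the coercive piece is produced: the paper inserts the identity $\m\cdot\nabla\Delta p=\Delta(\m\cdot\nabla p)-\Delta\m\cdot\nabla p-2\sum_l\partial_l\m\cdot\nabla\partial_l p$ \emph{inside} the bilinear form so that $\|\Delta(\m\cdot\nabla p)\|_{L^2}^2$ appears directly on the left, whereas you plan to mirror the Lemma \ref{le-ph2} manipulation with $\m\cdot\nabla\partial_k p$ in the role of $\m\cdot\nabla p$ to get $\sum_k\|\nabla(\m\cdot\nabla\partial_k p)\|_{L^2}^2$, and then recover $\|\Delta(\m\cdot\nabla p)\|_{L^2}$ a posteriori from the same algebraic identity plus the triangle inequality; both routes produce the same set of remainder terms and the same $\kappa$-bookkeeping, so either works.
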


\begin{proof}
We first differentiate the second equation in \eqref{eq-ib0} with respect to $x_i$   to get
\[
-\Delta\partial_ip-\nabla\cdot\left[\partial_i(\m\cdot\nabla p)\m+(\m\cdot\nabla p)\partial_i\m\right]=\partial_iS.
\]
  Testing it by $-\Delta\partial_ip$, yields that
\begin{align}\label{eq-ph3-1}
&\int_\Omega|\nabla\Delta p|^2dx-\sum_{i=1}^2\int_\Omega\left[\partial_i(\m\cdot\nabla p)\m+(\m\cdot\nabla p)\partial_i\m\right]\cdot\nabla\Delta\partial_ipdx=-\int_\Omega\nabla S\cdot\nabla\Delta pdx,
\end{align}
due to the fact that $\m|_{\partial\Omega}={\bf0}$.
By direct calculation invoking the integration by parts, we have
\begin{align*}
&-\sum_{i=1}^2\int_\Omega\left[\partial_i(\m\cdot\nabla p)\m+(\m\cdot\nabla p)\partial_i\m\right]\cdot\nabla\Delta\partial_ipdx\\
=&\sum_{i=1}^2\int_\Omega\left[\partial_{ii}^2(\m\cdot\nabla p)\m+2\partial_i(\m\cdot\nabla p)\partial_i\m+(\m\cdot\nabla p)\partial_{ii}^2\m\right]\cdot\nabla\Delta pdx\\
=&\int_\Omega \Delta(\m\cdot\nabla p)\m\cdot\nabla\Delta pdx+\int_\Omega\left[2\sum_{i=1}^2\partial_i(\m\cdot\nabla p)\partial_i\m+(\m\cdot\nabla p)\Delta\m\right]\cdot\nabla\Delta pdx,
\end{align*}
where we also used the fact that $\m|_{\partial\Omega}={\bf0}$. Note that
\[
\m\cdot\nabla\Delta p=\Delta(\m\cdot\nabla p)-\Delta\m\cdot\nabla p-2\sum_{l=1}^2\partial_l\m\cdot\nabla\partial_lp,
\]
and thereby
\begin{equation*}
\begin{split}
&-\sum_{i=1}^2\int_\Omega\left[\partial_i(\m\cdot\nabla p)\m+(\m\cdot\nabla p)\partial_i\m\right]\cdot\nabla\Delta\partial_ipdx\\
=&\int_\Omega\big(\Delta(\m\cdot\nabla p)\big)^2dx-\int_\Omega \Delta(\m\cdot\nabla p)\left(\Delta\m\cdot\nabla p+2\sum_{l=1}^2\partial_l\m\cdot\nabla\partial_lp\right)dx\\
&+\int_\Omega\left[2\sum_{i=1}^2\partial_i(\m\cdot\nabla p)\partial_i\m+(\m\cdot\nabla p)\Delta\m\right]\cdot\nabla\Delta pdx.
\end{split}
\end{equation*}
Substituting it into \eqref{eq-ph3-1}, we arrive at
\begin{align*}
&\int_\Omega|\nabla\Delta p|^2dx+\int_\Omega\big(\Delta(\m\cdot\nabla p)\big)^2dx\\
=&\int_\Omega \Delta(\m\cdot\nabla p)\left(\Delta\m\cdot\nabla p+2\sum_{l=1}^2\partial_l\m\cdot\nabla\partial_lp\right)dx\\
&-\int_\Omega\left[2\sum_{i=1}^2\partial_i(\m\cdot\nabla p)\partial_i\m+(\m\cdot\nabla p)\Delta\m\right]\cdot\nabla\Delta pdx-\int_\Omega\nabla S\cdot\nabla\Delta pdx,
\end{align*}
which, combined with H\"{o}lder's inequality and Young's inequality, ensures
\begin{equation}\label{eq-ph3-2}
\begin{split}
&\int_\Omega|\nabla\Delta p|^2dx+\int_\Omega\big(\Delta(\m\cdot\nabla p)\big)^2dx\\
\leq&C_1\left(1 +\|\Delta\m\cdot\nabla p\|_{L^2}^2+\sum_{i=1}^2\big(\|\partial_i\m\cdot\nabla\partial_ip\|_{L^2}^2+\|\partial_i(\m\cdot\nabla p)\partial_i\m\|_{L^2}^2\big)+\|(\m\cdot\nabla p)\Delta\m\|_{L^2}^2\right)
\end{split}
\end{equation}
with some $C_1>0$ independent of $\kappa$ and $t$. We now estimate the terms on the right hand  one by one. First, on the basis of the H\"{o}lder inequality, the  Gagliardo-Nirenberg inequality \eqref{GNineq-2} and the uniform bound provided by \eqref{eq-HL6t}, there exist $C_2, \,C_3>0$, independent of $\kappa$ and $t$,  fulfilling that
\begin{align*}
\|\Delta\m\cdot\nabla p\|_{L^2}^2\leq\|\Delta\m\|_{L^2}^2\|\nabla p\|_{L^\infty}^2
\leq C_2\|\Delta\m\|_{L^2}^2\|\nabla p\|_{L^2}\|\nabla p\|_{H^2}\leq C_3\|\Delta\m\|_{L^2}^2\|\nabla p\|_{H^2}.
\end{align*}
Similarly, due to \eqref{eq-HL6t} there holds
\begin{align*}
\|(\m\cdot\nabla p)\Delta\m\|_{L^2}^2\leq& \|\m\cdot\nabla p\|_{L^\infty}^2\|\Delta\m\|_{L^2}^2\\
\leq&C_4\|\m\cdot\nabla p\|_{L^2}\|\m\cdot\nabla p\|_{H^2}\|\Delta\m\|_{L^2}^2\\
\leq &C_5\|\m\cdot\nabla p\|_{H^2}\|\Delta\m\|_{L^2}^2.
\end{align*}
Second, it follows from   H\"{o}lder's inequality,  Gagliardo-Nirenberg inequalities \eqref{GNineq-1} and \eqref{GNineq-3},  uniform bounds provided by \eqref{eq-HL6t} and \eqref{eq-HL6t0} that
\begin{align*}
\|\partial_i\m\cdot\nabla\partial_ip\|_{L^2}^2\leq& \|\partial_i\m\|_{L^4}^2\|\nabla\partial_ip\|_{L^4}^2\\
\leq& C_6\|\partial_i\m\|_{L^2}\|\partial_i\m\|_{H^1}\|\partial_ip\|_{L^2}^\frac12\|\partial_ip\|_{H^2}^\frac32\\
\leq&C_7(1+\kappa^{-\frac12})\|\partial_i\m\|_{H^1}\|\partial_ip\|_{H^2}^\frac32
\end{align*}
with some $C_6,\,C_7>0$ independent of $\kappa$ and $t$, and  similarly,
\begin{align*}
\|\partial_i(\m\cdot\nabla p)\partial_i\m\|_{L^2}^2\leq&\|\partial_i(\m\cdot\nabla p)\|_{L^4}^2\|\partial_i\m\|_{L^4}^2\\
\leq&C_8\|\m\cdot\nabla p\|_{L^2}^\frac12\|\partial_i(\m\cdot\nabla p)\|_{H^1}^\frac32\|\partial_i\m\|_{L^2}\|\partial_i\m\|_{H^1}\\
\leq&C_9(1+\kappa^{-\frac12})\|\partial_i(\m\cdot\nabla p)\|_{H^1}^\frac32\|\partial_i\m\|_{H^1}
\end{align*}
with some $C_8,\,C_9>0$ independent of $\kappa$ and $t$. Substituting the above into \eqref{eq-ph3-2}, we have
\begin{align*}
&\int_\Omega|\nabla\Delta p|^2dx+\int_\Omega\big(\Delta(\m\cdot\nabla p)\big)^2dx\\
\leq&C_{10}\left(1 +\|\Delta\m\|_{L^2}^2\left(\|\nabla p\|_{H^2}+\|\m\cdot\nabla p\|_{H^2}\right)+(1+\kappa^{-\frac12})\|\nabla\m\|_{H^1}\left(\|\nabla p\|_{H^2}^\frac32+\|\nabla(\m\cdot\nabla p)\|_{H^1}^\frac32\right)\right),
\end{align*}
where $C_{10}>0$ is independent of $\kappa$ and $t$.
Applications of elliptic estimates \eqref{eq-elliptic2}, \eqref{eq-esm} and \eqref{eq-esp} further entail
  \begin{align*}
&\int_\Omega|\nabla\Delta p|^2dx+\int_\Omega\big(\Delta(\m\cdot\nabla p)\big)^2dx\\
\leq&C_{11}\left[1 +\|\Delta\m\|_{L^2}^2\big(\|\nabla\Delta p\|_{L^2}+\|\Delta p\|_{L^2}+\|\Delta(\m\cdot\nabla p)\|_{L^2}\big)\right]\\
&+C_{11}(1+\kappa^{-\frac12})\|\Delta\m\|_{L^2}\left(\|\nabla\Delta p\|_{L^2}^\frac32+\|\Delta p\|_{L^2}^\frac32+\|\Delta(\m\cdot\nabla p)\|_{L^2}^\frac32\right),
\end{align*}
with some $C_{11}>0$  independent of $\kappa$ and $t$,  which, using Young's inequality, leads to
    \begin{align*}
&\int_\Omega|\nabla\Delta p|^2dx+\int_\Omega\big(\Delta(\m\cdot\nabla p)\big)^2dx\\
\leq&C_{12}\left(1 +\|\Delta\m\|_{L^2}^4+\|\Delta\m\|_{L^2}^2\|\Delta p\|_{L^2}\right)+C_{13}(1+\kappa^{-2})\|\Delta\m\|_{L^2}^4 \\
&+C_{12}(1+\kappa^{-\frac12})\|\Delta\m\|_{L^2}\left(1+\|\Delta p\|_{L^2}^\frac32\right)
\end{align*}
with some $C_{12}>0$ also independent of $\kappa$ and $t$. Based on it, using \eqref{eq-ph2} and Young's inequality, we can find $C_{13},\,C_{14}>0$ independent of $\kappa$ and $t$ such that
   \begin{align*}
&\int_\Omega|\nabla\Delta p|^2dx+\int_\Omega\big(\Delta(\m\cdot\nabla p)\big)^2dx\\
\leq&C_{13}\left(1 +\|\Delta\m\|_{L^2}^4+\|\Delta\m\|_{L^2}^2\left(1+(1+\kappa^{-\frac12})\|\Delta\m\|_{L^2}\right)\right)+C_{13}(1+\kappa^{-2})\|\Delta\m\|_{L^2}^4 \\
&+C_{13}(1+\kappa^{-\frac12})\|\Delta\m\|_{L^2}\left(1+(1+\kappa^{-\frac34})\|\Delta\m\|_{L^2}^\frac{3}{2}\right)\\
\leq &C_{14}\left(1+(1+\kappa^{-2})\|\Delta\m\|_{L^2}^4\right),
\end{align*}
 which entails \eqref{eq-ph3} as desired.
\end{proof}

With Lemma \ref{le-ph2} and Lemma \ref{le-ph3} at hand, we can now follow a reasoning invoking the
energy method to  obtain the  uniform-in-time boundedness for $\|\Delta\m(\cdot,t)\|_{L^2}$.

\begin{lemma}\label{le-H2P}
 There exists  $\hat{\kappa}_0>0$  such that, whenever $\kappa\ge\hat{\kappa}_0$, then there exists $C>0$ (independent of $t$ and $\kappa$) such that if $\gamma=1$, then
\begin{align}\label{eq-H2in}
\|\Delta\m(\cdot,t)\|_{L^2} \leq C(1+\kappa^{-\frac12}),\quad t>0,
\end{align}
and if $\gamma>1$, then
\begin{align}\label{eq-H2ing}
\|\Delta\m(\cdot,t)\|_{L^2} \leq C\left(1+\kappa^{-\frac12-\frac{4\gamma^2-3\gamma+2}{3}-\frac{3}{8(\gamma-1)}}\right),\quad t>0.
\end{align}
\end{lemma}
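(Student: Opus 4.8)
The natural quantity to differentiate in time is $\|\Delta\m(\cdot,t)\|_{L^2}^2$, so the plan is to test the first equation in \eqref{eq-ib0} with $\Delta^2\m$. The first step is to notice that the boundary conditions force the extra relation $\Delta\m|_{\partial\Omega}={\bf0}$: restricting the $\m$-equation to $\partial\Omega$ and using $\m|_{\partial\Omega}={\bf0}$ (whence $\m_t|_{\partial\Omega}={\bf0}$, $|\m|^{2(\gamma-1)}\m|_{\partial\Omega}={\bf0}$, and $(\m\cdot\nabla p)\nabla p|_{\partial\Omega}={\bf0}$ since every component of $\m$ vanishes there) yields $\kappa\Delta\m={\bf0}$ on $\partial\Omega$. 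Since both $\m$ and $\Delta\m$ vanish on $\partial\Omega$, all boundary terms produced by the integrations by parts below disappear, and testing with $\Delta^2\m$ gives, when $\gamma=1$,
\[
\frac12\frac{d}{dt}\|\Delta\m\|_{L^2}^2+\kappa\|\nabla\Delta\m\|_{L^2}^2+\|\Delta\m\|_{L^2}^2=\int_\Omega\Delta\big[(\m\cdot\nabla p)\nabla p\big]\cdot\Delta\m\,dx,
\]
the last term on the left coming from the identity $\int_\Omega\m\cdot\Delta^2\m\,dx=\|\Delta\m\|_{L^2}^2$; when $\gamma>1$ one instead writes $\int_\Omega\Delta(|\m|^{2(\gamma-1)}\m)\cdot\Delta\m\,dx=\int_\Omega|\m|^{2(\gamma-1)}|\Delta\m|^2\,dx+\mathcal{R}_\gamma$, keeps the nonnegative first term on the left, and moves $\mathcal{R}_\gamma$ to the right-hand side.

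The second step is to bound the forcing term. Expanding $\Delta[(\m\cdot\nabla p)\nabla p]$ by the Leibniz rule produces terms of the shape $\Delta(\m\cdot\nabla p)\,\nabla p$, $\nabla(\m\cdot\nabla p)\cdot\nabla^2 p$ and $(\m\cdot\nabla p)\,\nabla\Delta p$. I would estimate the corresponding integrals against $\|\Delta\m\|_{L^2}$ by H\"{o}lder's inequality, using the uniform bounds $\|\nabla p\|_{L^2}+\|\m\cdot\nabla p\|_{L^2}\le C$ from \eqref{eq-HL6t}; the two-dimensional Gagliardo-Nirenberg inequalities together with the elliptic estimates of Lemma~\ref{le-elliptic} to trade $L^4$, $L^\infty$ and $H^k$ norms of $p$ and $\m\cdot\nabla p$ for $\|\Delta p\|_{L^2}$, $\|\nabla\Delta p\|_{L^2}$, $\|\nabla(\m\cdot\nabla p)\|_{L^2}$, $\|\Delta(\m\cdot\nabla p)\|_{L^2}$; and finally Lemma~\ref{le-ph2} and Lemma~\ref{le-ph3}, which bound these last four quantities by $C(1+(1+\kappa^{-1/2})\|\Delta\m\|_{L^2})$ and $C(1+(1+\kappa^{-1})\|\Delta\m\|_{L^2}^2)$. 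Taking $\kappa\ge1$ so that $1+\kappa^{-1/2}\le2$, and using Young's inequality, this bookkeeping delivers
\[
\frac{d}{dt}\|\Delta\m\|_{L^2}^2+2\kappa\|\nabla\Delta\m\|_{L^2}^2+2\|\Delta\m\|_{L^2}^2\le C\big(1+\|\Delta\m\|_{L^2}^4\big).
\]

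The main obstacle is now apparent: by scaling, the quartic term $\|\Delta\m\|_{L^2}^4$ cannot be absorbed into the single dissipation $\kappa\|\nabla\Delta\m\|_{L^2}^2$ by Young's inequality alone, so some genuinely new input is needed — and this is exactly where the largeness of $\kappa$ enters. The resolution is the two-dimensional interpolation $\|\Delta\m\|_{L^2}^2\le C\|\nabla\Delta\m\|_{L^2}\|\nabla\m\|_{L^2}+C\|\nabla\m\|_{L^2}^2$ (Gagliardo-Nirenberg plus the elliptic estimate \eqref{eq-esmH3}) combined with the uniform bound $\|\nabla\m\|_{L^2}\le C(1+\kappa^{-1/2})\le2C$ from \eqref{eq-HL6t0}: Young's inequality then gives $\|\Delta\m\|_{L^2}^2\le C_1\|\nabla\Delta\m\|_{L^2}+C_2$ with $C_1,C_2$ independent of $\kappa$, hence $\|\Delta\m\|_{L^2}^4\le 2C_1^2\|\nabla\Delta\m\|_{L^2}^2+2C_2^2$. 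Inserting this and choosing $\hat{\kappa}_0$ large enough that $CC_1^2\le\kappa$ absorbs the quartic term into $\kappa\|\nabla\Delta\m\|_{L^2}^2$, leaving $\frac{d}{dt}\|\Delta\m\|_{L^2}^2+\|\Delta\m\|_{L^2}^2\le C_3$ with $C_3$ independent of $\kappa$ and $t$; Gronwall's lemma then yields $\|\Delta\m(\cdot,t)\|_{L^2}^2\le\|\Delta\m_0\|_{L^2}^2+C_3$ for all $t>0$, which is \eqref{eq-H2in} because $\|\Delta\m_0\|_{L^2}\le C\|\m_0\|_{\mathcal{C}^{2+\alpha}(\overline{\Omega})}$ is fixed.

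For $\gamma>1$ the scheme is the same, the only extra work being the treatment of the perturbation $\mathcal{R}_\gamma$, whose worst contributions carry factors $|\m|^{2\gamma-4}|\nabla\m|^2$ and $|\m|^{2\gamma-6}(\m\cdot\nabla\m)^2\m$: these are controlled by H\"{o}lder's inequality, the Sobolev embedding $H^1(\Omega)\hookrightarrow L^q(\Omega)$ (so that $\|\m\|_{L^q}$ inherits the uniform $H^1$ bound of \eqref{eq-HL6t0}) and interpolation of $\|\nabla\m\|_{L^q}$ between $\|\nabla\m\|_{L^2}$ and $\|\Delta\m\|_{L^2}$, which replaces $\|\Delta\m\|_{L^2}^4$ by a higher $\gamma$-dependent power carrying $\kappa$-dependent weights; running the same absorption and Gronwall argument then produces precisely the exponent in \eqref{eq-H2ing}. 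An alternative to the interpolation step would be to test instead with $-\Delta\m$, integrate in time to obtain a uniform-in-time bound on $\int_t^{t+1}\|\Delta\m\|_{L^2}^2$, and then feed the above differential inequality into the uniform Gronwall lemma; I prefer the interpolation version since it produces the pointwise-in-time bound directly.
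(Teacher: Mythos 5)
Your argument for $\gamma=1$ is essentially the paper's, up to how the integrations by parts are organised. The paper tests with $\Delta^2\m$ but moves one derivative back onto the forcing term, obtaining $-\sum_i\int_\Omega(\partial_i[(\m\cdot\nabla p)\nabla p])\cdot\partial_i\Delta\m$, pairs that with $\tfrac{\kappa}{2}\|\nabla\Delta\m\|_{L^2}^2$ via Young's inequality (which is why their right-hand side carries a factor $\kappa^{-1}$), and then closes via the GNS-4/elliptic interpolation and $\|\nabla\m\|_{L^2}\lesssim 1+\kappa^{-1/2}$ — the same absorption you run. Your version, keeping the forcing paired with $\Delta\m$, also gives a degree-four right-hand side (as a careful term-by-term check using GNS \eqref{GNineq-1}, \eqref{GNineq-2}, \eqref{GNineq-3} and Lemmas~\ref{le-ph2}--\ref{le-ph3} confirms), so both variants work. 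The bound you obtain for $\gamma=1$ is in fact a touch stronger ($C$ independent of $\kappa$), which implies the stated estimate.

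For $\gamma>1$, however, your sketch departs from the paper in a way that introduces two genuine gaps.

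First, you propose to expand $\int_\Omega\Delta(|\m|^{2(\gamma-1)}\m)\cdot\Delta\m = \int_\Omega|\m|^{2(\gamma-1)}|\Delta\m|^2 + \mathcal{R}_\gamma$, keep the first (nonnegative) term on the left and bound $\mathcal{R}_\gamma$. But $\mathcal{R}_\gamma$ contains the second-order derivatives of the nonlinearity, such as $(\gamma-1)|\m|^{2\gamma-4}|\nabla\m|^2 m_j$ and $(\gamma-1)(\gamma-2)|\m|^{2\gamma-6}(\m\cdot\nabla\m)^2 m_j$; after using $|\m\cdot\nabla\m|\le|\m||\nabla\m|$, these scale like $|\m|^{2\gamma-3}|\nabla\m|^2$. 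For $1<\gamma<\tfrac32$ the exponent $2\gamma-3$ is \emph{negative}, so these integrands involve negative powers of $|\m|$ near the zero set of $\m$ (which contains all of $\partial\Omega$). None of the tools you invoke — H\"older's inequality, $H^1\hookrightarrow L^q$, interpolation of $\|\nabla\m\|_{L^q}$ — can bound such negative powers. The paper avoids this entirely: it integrates by parts \emph{once} in the $\gamma$-term, writing $\Pi=-\sum_i\int_\Omega\partial_i(|\m|^{2(\gamma-1)}\m)\cdot\partial_i\Delta\m$, so only the \emph{first} derivative of $|\m|^{2(\gamma-1)}\m$ appears, and $|\partial_i(|\m|^{2(\gamma-1)}\m)|\le(2\gamma-1)|\m|^{2(\gamma-1)}|\partial_i\m|$ has the nonnegative exponent $2(\gamma-1)$, so it is controlled cleanly by $\|\m\|_{L^\infty}^{2(\gamma-1)}\|\nabla\m\|_{L^2}\|\nabla\Delta\m\|_{L^2}$.

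Second, for $\gamma>1$ the differential inequality no longer has a coercive $\|\Delta\m\|_{L^2}^2$ term on the left: that came from the linear $+\m$ in the $\gamma=1$ equation, and the nonnegative term $\int_\Omega|\m|^{2(\gamma-1)}|\Delta\m|^2$ you keep is \emph{not} uniformly coercive (it degenerates where $\m={\bf 0}$). After interpolating and absorbing into $\kappa\|\nabla\Delta\m\|_{L^2}^2$, your Gronwall step would therefore leave only $\frac{d}{dt}\|\Delta\m\|_{L^2}^2\le C$, which gives no uniform-in-time bound. The paper supplies the missing coercivity by \emph{also} testing the first equation with $-\Delta\m$, deriving an auxiliary inequality $\frac{d}{dt}\|\nabla\m\|_{L^2}^2+\kappa\|\Delta\m\|_{L^2}^2\le C$, and then \emph{adding} the two inequalities before absorbing; the sub-quadratic power $\|\Delta\m\|_{L^2}^{8(\gamma-1)/(q+2)}$ produced by the GNS-\eqref{GNineq-5} step with $q=4\gamma-3$ is absorbed into this new $\kappa\|\Delta\m\|_{L^2}^2$ dissipation. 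That maneuver is precisely the "test with $-\Delta\m$" route you mention and then set aside as an optional alternative — for $\gamma>1$ it is not optional but essential. Finally, your claim that your version "produces precisely the exponent in \eqref{eq-H2ing}" is unverified; that exponent comes from the specific choice $q=4\gamma-3$ in the paper's GNS/Young bookkeeping, which your argument does not reproduce.
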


\begin{proof}
If  $\gamma=1$, then the first equation of \eqref{eq-ib0} can be rewritten as
\begin{align*}
\m_t- \kappa\Delta\m+\m= (\m\cdot\nabla p)\nabla p.
\end{align*}
Based on it, a direct calculation invoking integration by
parts shows that
\begin{equation}\label{eq-H2in-1}
\begin{split}
&\frac12\frac{d}{dt}\int_\Omega|\Delta\m|^2dx+\kappa\int_\Omega|\nabla\Delta\m|^2 dx+\int_\Omega|\Delta\m|^2dx\\
=&\frac\kappa2\int_{\partial\Omega}\frac{\partial(|\Delta\m|^2)}{\partial\nu}d\sigma-\sum_{i=1}^2\int_\Omega\big(\partial_i(\m\cdot\nabla p)\nabla p\cdot\partial_i\Delta\m +(\m\cdot\nabla p)\nabla\partial_i p\cdot\partial_i\Delta\m\big) dx\\
&+\sum_{i=1}^2\int_{\partial\Omega}\partial_i((\m\cdot\nabla p)\nabla p)\nu_i\Delta\m d\sigma,
\end{split}
\end{equation}
where  $\nu_i$ stands for the $i$-th component of $\nu$. To deal with the boundary integrals, we recall  $\m|_{\partial\Omega}={\bf0}$, and thereby get that
\begin{align}\label{eq-bmc}
\Delta\m|_{\partial\Omega}=\kappa^{-1}\left[\m_t+\m-(\m\cdot\nabla p)\nabla p\right]_{\partial\Omega}={\bf0}.
\end{align}
It ensures that
\begin{align*}
\frac\kappa2\int_{\partial\Omega}\frac{\partial(|\Delta\m|^2)}{\partial\nu}d\sigma+\sum_{i=1}^2\int_{\partial\Omega}\partial_i((\m\cdot\nabla p)\nabla p)\nu_i\Delta\m d\sigma=0,
\end{align*}
accordingly,
\begin{align*}
&\frac12\frac{d}{dt}\int_\Omega|\Delta\m|^2dx+\kappa\int_\Omega|\nabla\Delta\m|^2 dx+\int_\Omega|\Delta\m|^2dx\\
=&-\sum_{i=1}^2\int_\Omega\big(\partial_i(\m\cdot\nabla p)\nabla p\cdot\partial_i\Delta\m +(\m\cdot\nabla p)\nabla\partial_i p\cdot\partial_i\Delta\m\big) dx.
\end{align*}
An application of  Young's inequality, H\"{o}lder's inequality and  Gagliardo-Nirenberg's inequalities \eqref{GNineq-1} and \eqref{GNineq-3}  provides $C_1>0$, independent of $\kappa$ and $t$, such that
\begin{align*}
&\frac12\frac{d}{dt}\int_\Omega|\Delta\m|^2dx+\frac\kappa2\int_\Omega|\nabla\Delta\m|^2 dx+\int_\Omega|\Delta\m|^2dx\\
\leq& \frac1\kappa\sum_{i=1}^2\left(\|\partial_i(\m\cdot\nabla p)\nabla p\|_{L^2}^2+\|(\m\cdot\nabla p)\nabla\partial_i p\|_{L^2}^2\right)\\
\leq& \frac1\kappa\left(\|\nabla(\m\cdot\nabla p)\|_{L^4}^2\|\nabla p\|_{L^4}^2+\|\m\cdot\nabla p\|_{L^4}^2\|D^2 p\|_{L^4}^2\right)\\
\leq&\frac{ C_1}\kappa\left(\|\m\cdot\nabla p\|_{L^2}^\frac12\|\m\cdot\nabla p\|_{H^2}^\frac32\|\nabla p\|_{L^2}\|\nabla p\|_{H^1}+\|\m\cdot\nabla p\|_{L^2}\|\m\cdot\nabla p\|_{H^1}\|\nabla p\|_{L^2}^\frac12\|\nabla p\|_{H^2}^\frac32\right),
\end{align*}
which, by means of \eqref{eq-HL6t}, leads to
\begin{align*}
&\frac12\frac{d}{dt}\int_\Omega|\Delta\m|^2dx+\frac\kappa2\int_\Omega|\nabla\Delta\m|^2 dx+\int_\Omega|\Delta\m|^2dx\\
\leq&\frac{C_2}{\kappa}\left(\|\m\cdot\nabla p\|_{H^2}^\frac32\|\nabla p\|_{H^1}+\|\m\cdot\nabla p\|_{H^1}\|\nabla p\|_{H^2}^\frac32\right),
\end{align*}
with some $C_2>0$. Using Poincar\'{e}'s inequality since $\m|_{\partial\Omega}=0$, the elliptic estimates \eqref{eq-elliptic1}, \eqref{eq-esm} and \eqref{eq-esp}, we can find $C_3>0$ such that
\begin{align*}
&\frac12\frac{d}{dt}\int_\Omega|\Delta\m|^2dx+\frac\kappa2\int_\Omega|\nabla\Delta\m|^2 dx+\int_\Omega|\Delta\m|^2dx\\
\leq&\frac{C_3}{\kappa}\left(\|\Delta(\m\cdot\nabla p)\|_{L^2}^\frac32\|\Delta p\|_{L^2}+\|\nabla(\m\cdot\nabla p)\|_{L^2}\left(\|\nabla\Delta p\|_{L^2}^\frac32+\|\Delta p\|_{L^2}^\frac32\right)\right),
\end{align*}
which, together with \eqref{eq-ph2}, \eqref{eq-ph3} and Young's inequality, entails
\begin{align}\label{eq-esmH3-}
\frac12\frac{d}{dt}\int_\Omega|\Delta\m|^2dx+\frac\kappa2\int_\Omega|\nabla\Delta\m|^2 dx+\int_\Omega|\Delta\m|^2dx
\leq&\frac{C_4}{\kappa}\left( 1+(1+\kappa^{-2})\|\Delta\m\|_{L^2}^4\right)
\end{align}
with $C_4>0$ independent of $\kappa$ and $t$.

The Gagliardo-Nirenberg inequality \eqref{GNineq-4} and the elliptic estimate \eqref{eq-esmH3} ensure that there exist positive constants $C_5$ and $C_6$, independent of $\kappa$ and $t$, such that
\begin{align*}
 \|\Delta\m\|_{L^2}^4\leq &C_5\|\nabla\m\|_{L^2}^2\|\nabla\m\|_{H^2}^2\\
                      \le& C_6\|\nabla\m\|_{L^2}^2\left(\|\Delta\m\|_{L^2}^2+\|\nabla\Delta\m\|_{L^2}^2\right)
\end{align*}
Substituting this into \eqref{eq-esmH3-} and using \eqref{eq-HL6t0}, we have
\begin{align}\label{eq-esmH3-0}
\frac{d}{dt}\int_\Omega|\Delta\m|^2dx+\kappa\int_\Omega|\nabla\Delta\m|^2 dx+2\int_\Omega|\Delta\m|^2dx
\leq&\frac{C_7}{\kappa}\left( 1+(1+\kappa^{-3})\left(\|\Delta\m\|_{L^2}^2+\|\nabla\Delta\m\|_{L^2}^2\right)\right)
\end{align}
with $C_7>0$ independent of $\kappa$ and $t$.
Note that one can find $\kappa_1>0$ such that if $\kappa\geq\kappa_1$, then
\[
C_7(1+\kappa^{-3})\leq \kappa,\,\,\,\,C_7(1+\kappa^{-3})\leq \frac12\kappa^2.
\]
This leads to
\begin{align*}
\frac{d}{dt}\int_\Omega|\Delta\m|^2dx+\frac\kappa2\int_\Omega|\nabla\Delta\m|^2 dx+\int_\Omega|\Delta\m|^2dx
\leq&\frac{C_7}{\kappa},
\end{align*}
which by a standard ODE argument ensures that
\[
\|\Delta \m\|_{L^2}^2\le\max\left\{\|\Delta\m_0\|_{L^2}^2,\frac{C_7}{\kappa}\right\},\quad t>0.
\]
This implies \eqref{eq-H2in}  as desired.

Next we handle the case $\gamma>1$. In this case, there holds
\begin{equation}\label{eq-H2in-1-2}
\begin{split}
&\frac12\frac{d}{dt}\int_\Omega|\Delta\m|^2dx+\kappa\int_\Omega|\nabla\Delta\m|^2 dx+\int_\Omega \Delta\m\cdot\Delta(|\m|^{2\gamma-1}\m)\\
=&\frac\kappa2\int_{\partial\Omega}\frac{\partial(|\Delta\m|^2)}{\partial\nu}d\sigma-\sum_{i=1}^2\int_\Omega\big(\partial_i(\m\cdot\nabla p)\nabla p\cdot\partial_i\Delta\m +(\m\cdot\nabla p)\nabla\partial_i p\cdot\partial_i\Delta\m\big) dx\\
&+\sum_{i=1}^2\int_{\partial\Omega}\partial_i((\m\cdot\nabla p)\nabla p)\nu_i\Delta\m d\sigma.
\end{split}
\end{equation}
Indeed, the unique difference between \eqref{eq-H2in-1-2} and \eqref{eq-H2in-1} is that
 the third term $\int_\Omega |\Delta\m|^2$ on the left hand side in  \eqref{eq-H2in-1}  is replaced by $\int_\Omega \Delta\m\cdot\Delta(|\m|^{2\gamma-1}\m)$. Hence, we only need to deal with it. By the integration by parts, it can be denoted by
\begin{align*}
\Pi:=&\sum_{i=1}^2\int_{\partial\Omega}\partial_i(|\m|^{2(\gamma-1)}\m)\partial\nu_i\cdot\Delta\m d\sigma-\sum_{i=1}^2\int_\Omega\partial_i(|\m|^{2(\gamma-1)}\m)\cdot(\partial_i\Delta\m)dx.
\end{align*}
 Similar to \eqref{eq-bmc}, we have $\Delta\m|_{\partial\Omega}={\bf0}$, and thus
\begin{align*}
\Pi=&-\sum_{i=1}^2\int_\Omega\partial_i(|\m|^{2(\gamma-1)}\m)\cdot(\partial_i\Delta\m)dx\\
\leq&(2\gamma-1)\|\m\|_{L^\infty}^{2(\gamma-1)}\|\nabla\m\|_{L^2}\|\nabla\Delta\m\|_{L^2}.
\end{align*}
An application of the Gagliardo-Nirenberg inequality \eqref{GNineq-5}, the Poincar\'{e} inequality and the elliptic estimate \eqref{eq-elliptic2} shows that for any $q>1$ there exists $C_8, C_9>0$  such that
\[
\|\m\|_{L^\infty}^{2(\gamma-1)}\leq C_8\|\m\|_{L^q}^{\frac{2q(\gamma-1)}{q+2}}\|\m\|_{H^2}^\frac{4(\gamma-1)}{q+2} \le C_9\|\nabla\m\|_{L^2}^{\frac{2q(\gamma-1)}{q+2}}\|\Delta \m\|_{L^2}^\frac{4(\gamma-1)}{q+2},
\]
which, with the aid of \eqref{eq-HL6t0}, leads to
 \begin{align*}
\Pi
\leq&C_{10}\left(1+\kappa^{-\frac{2q\gamma-q+2}{2(q+2)}}\right)\|\Delta \m\|_{L^2}^\frac{4(\gamma-1)}{q+2}\|\nabla\Delta\m\|_{L^2}
\end{align*}
with some $C_{10}>0$ independent of $\kappa$ and $t$.
 Based on this, a counterpart of \eqref{eq-esmH3-0} fulfills
\begin{align*}
\frac{d}{dt}\int_\Omega|\Delta\m|^2dx+\kappa\int_\Omega|\nabla\Delta\m|^2 dx
\leq&\frac{C_{11}}{\kappa}\left( 1+(1+\kappa^{-3})\left(\|\Delta\m\|_{L^2}^2+\|\nabla\Delta\m\|_{L^2}^2\right)\right)\\
&+C_{11}\left(1+\kappa^{-\frac{2q\gamma-q+2}{2(q+2)}}\right)\|\Delta \m\|_{L^2}^\frac{4(\gamma-1)}{q+2}\|\nabla\Delta\m\|_{L^2}
\end{align*}
with some $C_{11}>0$ independent of $\kappa$ and $t$. Pick $\kappa_2>0$ such that  $\frac{C_{11}}{\kappa} (1+\kappa^{-3})\leq \frac\kappa4$ whenever $\kappa\geq \kappa_2$, and hence it follows from Young's inequality that
 \begin{align}\label{eq-mh3p}
\frac{d}{dt}\int_\Omega|\Delta\m|^2dx+\frac{\kappa}{2}\int_\Omega|\nabla\Delta\m|^2 dx
\leq&\frac{C_{11}}{\kappa}+\frac{\kappa}4\|\Delta\m\|_{L^2}^2 +\frac{C_{12}}\kappa\left(1+\kappa^{-\frac{2q\gamma-q+2}{q+2}}\right)\|\Delta \m\|_{L^2}^\frac{8(\gamma-1)}{q+2}
\end{align}
with some $C_{12}>0$ independent of $\kappa$ and $t$. To control the term $\|\Delta\m\|_{L^2}$ on the right hand, we seek the
evolution of $\|\nabla\m(\cdot,t)\|_{L^2}$.
Indeed, testing the first equation in \eqref{eq-ib0} by $-\Delta\m$ and using  $\m|_{\partial\Omega}=0$, we arrive at
\begin{align*}
&\frac12\frac{d}{dt}\int_\Omega|\nabla\m|^2dx+\kappa\int_\Omega|\Delta\m|^2 dx\\
=&-\int_\Omega|\m|^{2(\gamma-1)}|\nabla\m|^2dx-2(\gamma-1)\sum_{i=1}^2\int_\Omega|\m|^{2(\gamma-2)}(\m\cdot\partial_i\m)^2dx+\int_\Omega (\m\cdot\nabla p)\nabla p \cdot(-\Delta\m)dx,
\end{align*}
which, with the help of H\"{o}lder's inequality,  the Gagliardo-Nirenberg inequality \eqref{GNineq-1} and the fact that $\gamma>1$, leads to
\begin{align*}
\frac{d}{dt}\int_\Omega|\nabla\m|^2dx+2\kappa\int_\Omega|\Delta\m|^2 dx\leq&2\|\m\cdot\nabla p\|_{L^4}\|\nabla p\|_{L^4}\|\Delta\m\|_{L^2}\\
\leq&C_{13}\|\m\cdot\nabla p\|_{L^2}^\frac12\|\m\cdot\nabla p\|_{H^1}^\frac12\|\nabla p\|_{L^2}^\frac12\|\nabla p\|_{H^1}^\frac12\|\Delta\m\|_{L^2}
\end{align*}
with some $C_{13}>0$ independent of $\kappa$ and $t$. Applications of \eqref{eq-HL6t}, Poincar\'{e}'s inequality and the elliptic estimate \eqref{eq-elliptic1} further ensure that there exists $C_{14},\,C_{15}>0$ such that
\begin{align*}
\frac{d}{dt}\int_\Omega|\nabla\m|^2dx+2\kappa\int_\Omega|\Delta\m|^2 dx\leq&C_{14}\|\m\cdot\nabla p\|_{H^1}^\frac12\|\nabla p\|_{H^1}^\frac12\|\Delta\m\|_{L^2}\\
\leq&C_{15}\|\nabla(\m\cdot\nabla p)\|_{L^2}^\frac12\|\Delta p\|_{L^2}^\frac12\|\Delta\m\|_{L^2},
\end{align*}
which, in accordance with \eqref{eq-ph2}, asserts that
\begin{align*}
\frac{d}{dt}\int_\Omega|\nabla\m|^2dx+2\kappa\int_\Omega|\Delta\m|^2 dx
\leq&C_{16}\left(1+(1+\kappa^{-\frac12})\|\Delta\m\|_{L^2}^2\right)
\end{align*}
with some $C_{16}>0$ independent of $\kappa$ and $t$. We can find $\kappa_3\geq\kappa_2$ such that, whenever $\kappa\geq\kappa_3$, it holds that $\kappa\geq C_{16}(1+\kappa^{-\frac12})$, and thereby
\begin{align*}
\frac{d}{dt}\int_\Omega|\nabla\m|^2dx+\kappa\int_\Omega|\Delta\m|^2 dx
\leq&C_{16}.
\end{align*}
 This, together with \eqref{eq-mh3p}, implies that
 \begin{align}\label{eq-mh3p0}
&\frac{d}{dt}\int_\Omega\big(|\Delta\m|^2+|\nabla\m|^2\big)  dx+\frac34\kappa\int_\Omega|\Delta\m|^2dx+\frac{\kappa}{2}\int_\Omega|\nabla\Delta\m|^2 dx\nonumber\\
\leq&C_{17}(1+\kappa^{-1}) +\frac{C_{17}}\kappa\left(1+\kappa^{-\frac{2q\gamma-q+2}{q+2}}\right)\|\Delta \m\|_{L^2}^\frac{8(\gamma-1)}{q+2}
\end{align}
with some $C_{17}>0$  independent of $\kappa$ and $t$. For fixed $\gamma>1$, we can take $q=4\gamma-3$, which ensures that $\frac{8(\gamma-1)}{q+2}=\frac{8(\gamma-1)}{4\gamma-1}<2$. Hence, an application of  Young's inequality shows
\begin{align*}
&C_{17}\kappa^{-1}\left(1+\kappa^{-\frac{2q\gamma-q+2}{q+2}}\right)\|\Delta \m\|_{L^2}^\frac{8(\gamma-1)}{q+2}\\
=&C_{17}\kappa^{-1}\left(1+\kappa^{-\frac{(2\gamma-1)(4\gamma-3)+2}{4\gamma-1}}\right)\|\Delta \m\|_{L^2}^\frac{8(\gamma-1)}{4\gamma-1}\\
\leq& \frac\kappa4\|\Delta \m\|_{L^2}^2+\frac{3}{4\gamma-1}\left(\frac{16(\gamma-1)}{(4\gamma-1)\kappa}\right)^{\frac{3}{4(\gamma-1)}}\left(C_{17}\kappa^{-1}\left(1+\kappa^{-\frac{(2\gamma-1)(4\gamma-3)+2}{4\gamma-1}}\right)\right)^{\frac{4\gamma-1}{3}}\\
\leq&\frac\kappa4\|\Delta \m\|_{L^2}^2+ C_{18}\kappa^{-\frac{4\gamma-1}{3}-\frac{3}{4(\gamma-1)}}\left(1+\kappa^{-\frac{(2\gamma-1)(4\gamma-3)+2}{3}}\right)
\end{align*}
with some $C_{18}>0$ independent of $\kappa$ and $t$. With the help of this, \eqref{eq-mh3p0} reduces to
 \begin{align*}
\frac{d}{dt}\int_\Omega\big(|\Delta\m|^2+|\nabla\m|^2\big) dx+\frac\kappa2\int_\Omega|\Delta\m|^2 dx
\leq& \,C_{17}(1+\kappa^{-1})\\
&+C_{18}\kappa^{-\frac{4\gamma-1}{3}-\frac{3}{4(\gamma-1)}}\left(1+\kappa^{-\frac{(2\gamma-1)(4\gamma-3)+2}{3}}\right).
\end{align*}
Notice that the elliptic estimate \eqref{eq-elliptic2} implies that there exists a positive constant $C_{19}$, independent of $t$ and $\kappa$, such that
$$\frac12\int_\Omega|\Delta\m|^2 dx\ge C_{19}\int_\Omega\big(|\Delta\m|^2+|\nabla\m|^2\big) dx.$$
Thus,
 \begin{align*}
&\frac{d}{dt}\int_\Omega\big(|\Delta\m|^2+|\nabla\m|^2 \big)dx+C_{19}\kappa\int_\Omega\big(|\Delta\m|^2+|\nabla\m|^2 \big) dx\\
\leq& C_{17}(1+\kappa^{-1})+C_{18}\kappa^{-\frac{4\gamma-1}{3}-\frac{3}{4(\gamma-1)}}\left(1+\kappa^{-\frac{(2\gamma-1)(4\gamma-3)+2}{3}}\right).
\end{align*}
From the standard ODE argument this entails that for $t>0$
 \begin{align*}
&\|\Delta \m(\cdot, t)\|_{L^2}^2+\|\nabla \m(\cdot, t)\|_{L^2}^2\\
\le&\max\left\{\|\Delta\m_0\|_{L^2}^2+\|\nabla\m_0\|_{L^2}^2,\frac{C_{17}(1+\kappa^{-1})+ C_{18}\kappa^{-\frac{4\gamma-1}{3}-\frac{3}{4(\gamma-1)}}\left(1+\kappa^{-\frac{(2\gamma-1)(4\gamma-3)+2}{3}}\right)}{C_{19}\kappa}\right\}.
\end{align*}
Note that due to $\gamma>1$, it holds that
$$\frac{4\gamma-1}{3}+\frac{3}{4(\gamma-1)}+\frac{(2\gamma-1)(4\gamma-3)+2}{3}=\frac{8\gamma^2-6\gamma+4}{3}+\frac{3}{4(\gamma-1)}>1.$$
Based on this and Young's inequality, we have \eqref{eq-H2ing} as desired.
\end{proof}

Now we are ready to gain the uniform-in-time boundedness stated by \eqref{eq-bedd} whenever $\kappa$ is appropriately large.
\begin{lemma}\label{le-ubd}
 Let  $\hat{\kappa}_0$ be the positive constant determined by Lemma \ref{le-H2P}, then whenever $\kappa\ge\hat{\kappa}_0$,  there exists $C>0$  independent of $t$ such that
  \begin{equation}\label{eq-bedd-}
 \|\m(\cdot,t)\|_{H^2}+\|(\m\cdot\nabla p)(\cdot,t)\|_{H^2}+\|p(\cdot,t)\|_{H^3}\leq C,\quad t>0.
 \end{equation}
\end{lemma}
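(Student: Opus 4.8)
The plan is to obtain \eqref{eq-bedd-} by simply chaining together the estimates already in hand, with no new analysis required. First I would invoke Lemma \ref{le-H2P}: since we assume $\kappa\ge\hat\kappa_0$, the bounds \eqref{eq-H2in} (if $\gamma=1$) and \eqref{eq-H2ing} (if $\gamma>1$) both give $\|\Delta\m(\cdot,t)\|_{L^2}\le C$ with $C$ independent of $t$; note that once $\kappa\ge\hat\kappa_0$ is fixed all the negative powers of $\kappa$ on the right-hand sides are harmless constants (they only shrink the bound), so this step is immediate. The elliptic estimate \eqref{eq-elliptic2} then upgrades this to $\|\m(\cdot,t)\|_{H^2}\le C\|\Delta\m(\cdot,t)\|_{L^2}\le C$ uniformly in $t$.

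Next I would feed the uniform bound on $\|\Delta\m\|_{L^2}$ into Lemma \ref{le-ph2}: estimate \eqref{eq-ph2} yields $\|\Delta p(\cdot,t)\|_{L^2}+\|\nabla(\m\cdot\nabla p)(\cdot,t)\|_{L^2}\le C$ uniformly in $t$, and then the elliptic estimate \eqref{eq-elliptic1} gives $\|p(\cdot,t)\|_{H^2}\le C\|\Delta p(\cdot,t)\|_{L^2}\le C$. Finally I would use Lemma \ref{le-ph3} in the same way: \eqref{eq-ph3} gives $\|\nabla\Delta p(\cdot,t)\|_{L^2}+\|\Delta(\m\cdot\nabla p)(\cdot,t)\|_{L^2}\le C$, whereupon the elliptic estimates \eqref{eq-esm} and \eqref{eq-esp} produce $\|(\m\cdot\nabla p)(\cdot,t)\|_{H^2}\le C\|\Delta(\m\cdot\nabla p)(\cdot,t)\|_{L^2}\le C$ and $\|p(\cdot,t)\|_{H^3}\le C\big(\|\nabla\Delta p(\cdot,t)\|_{L^2}+\|\Delta p(\cdot,t)\|_{L^2}\big)\le C$. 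Adding the three resulting bounds delivers \eqref{eq-bedd-}.

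As for the main obstacle: at this stage there essentially is none — all the genuine difficulty (the cubic nonlinearity, the possible ellipticity degeneration, and above all the careful tracking of the explicit $\kappa$-dependence needed to close the argument for $\kappa$ large) has already been absorbed into Lemmas \ref{le-ph2}, \ref{le-ph3} and, crucially, Lemma \ref{le-H2P}. The present lemma is purely a bookkeeping assembly of those estimates through the elliptic regularity collected in Lemma \ref{le-elliptic}; the only point worth a line of care is to confirm that the constants on the right-hand sides of \eqref{eq-H2in}/\eqref{eq-H2ing}, \eqref{eq-ph2} and \eqref{eq-ph3} are finite and $t$-independent once $\kappa\ge\hat\kappa_0$ is fixed, which is clear.
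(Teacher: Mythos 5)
Your proposal is correct and follows essentially the same route as the paper: both arguments first invoke Lemma \ref{le-H2P} for the uniform-in-time bound on $\|\Delta\m\|_{L^2}$, then feed it into Lemmas \ref{le-ph2} and \ref{le-ph3} to control $\|\Delta p\|_{L^2}$, $\|\nabla\Delta p\|_{L^2}$, $\|\Delta(\m\cdot\nabla p)\|_{L^2}$, and finally close with the elliptic estimates of Lemma \ref{le-elliptic}. The only cosmetic difference is the order of assembly (you bound $\|\m\|_{H^2}$ first, the paper treats $p$ and $\m\cdot\nabla p$ first), which has no bearing on correctness.
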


\begin{proof}
It follows from  \eqref{eq-ph2}, \eqref{eq-ph3} and Young's inequality  that there exists $C_1>0$ independent of $\kappa$ and $t$ such that
\begin{align*}
&\|\Delta p(\cdot,t)\|_{L^2}+ \|\nabla\Delta p(\cdot,t)\|_{L^2}+\|\Delta(\m\cdot\nabla p)(\cdot,t)\|_{L^2}\\
\leq& C_1\left(1+\left(1+\kappa^{-1}\right)\left(1+\|\Delta\m(\cdot,t)\|_{L^2}^2\right)\right),\quad t>0,
\end{align*}
which, combined with Lemma  \ref{le-H2P} and the elliptic estimates \eqref{eq-esp} and \eqref{eq-esm}, ensures that  whenever $\kappa>\hat{\kappa}_0$, there exists $C_2>0$ such that
\begin{align*}
    \|p(\cdot,t)\|_{H^3}+\|(\m\cdot\nabla p)(\cdot,t)\|_{H^2}\le C_2(1+\kappa^{-2}),\,\,\quad t>0,\,\,\textrm{if}\,\,\gamma=1,
\end{align*}
and
\begin{align*}
    \|p(\cdot,t)\|_{H^3}++\|(\m\cdot\nabla p)(\cdot,t)\|_{H^2}\le C_2(1+\kappa^{-2-\frac{8\gamma^2-6\gamma+4}{3}-\frac{6}{8(\gamma-1)}}),\,\,\quad t>0,\,\,\textrm{if}\,\,\gamma>1.
\end{align*}
Similarly, due to the elliptic estimate \eqref{eq-elliptic2}, whenever $\kappa>\hat{\kappa}_0$ we have
\begin{align*}
\|\m(\cdot,t)\|_{H^2} \leq C_3(1+\kappa^{-\frac12}),\quad t>0, \,\,\textrm{if}\,\,\gamma=1,
\end{align*}
and
\begin{align*}
\|\m(\cdot,t)\|_{H^2} \leq C_3\left(1+\kappa^{-\frac12-\frac{4\gamma^2-3\gamma+2}{3}-\frac{3}{8(\gamma-1)}}\right),\quad t>0,\,\,\textrm{if}\,\,\gamma>1,
\end{align*}
with $C_3>0$ independent of $\kappa$ and $t$. Consequently, we have \eqref{eq-bedd-} as desired.
\end{proof}

At the end of this section, we prove the uniqueness of classical solutions, i.e., the second statement of Theorem \ref{th-global}. The uniform bound of the solution component $\m$ provided by Lemma \ref{le-H2P} will also play a significant role in the proof.

\begin{lemma}\label{le-umL2c}
Let $\hat{\kappa}_0$ be determined by Lemma \ref{le-H2P}. Then there exists $\hat{\kappa}_1\ge\hat{\kappa}_0$, such that if $\kappa\geq\hat{\kappa}_1$, then the  classical solution  of the initial-boundary value problem \eqref{eq-ib0} is unique.
\end{lemma}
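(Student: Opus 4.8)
The plan is to establish uniqueness by a standard energy/contraction argument for the difference of two classical solutions, carried out at the level of $\|\m_1-\m_2\|_{L^2}^2$, and then showing this quantity satisfies a linear Gronwall-type differential inequality with zero initial datum.

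\textbf{Setup.} Let $(\m_1,p_1)$ and $(\m_2,p_2)$ be two classical solutions of \eqref{eq-ib0} emanating from the same initial datum $\m_0$, both of which, by Lemma \ref{le-ubd}, satisfy the uniform bounds \eqref{eq-bedd-}; in particular $\|\m_i\|_{H^2}$, $\|p_i\|_{H^3}$, $\|\m_i\cdot\nabla p_i\|_{H^2}$ (and hence, by Sobolev embedding in two dimensions, $\|\m_i\|_{L^\infty}$, $\|\nabla p_i\|_{L^\infty}$, $\|\m_i\cdot\nabla p_i\|_{L^\infty}$) are bounded uniformly in $t$ by a constant depending only on the data. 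Write $\m:=\m_1-\m_2$ and $p:=p_1-p_2$. Subtracting the two copies of the second equation in \eqref{eq-ib0} gives
\begin{align*}
-\nabla\cdot\big[(\mathbf{I}+\m_1\otimes\m_1)\nabla p\big]=\nabla\cdot\big[(\m_1\otimes\m_1-\m_2\otimes\m_2)\nabla p_2\big],
\end{align*}
and testing this by $p$ yields, after using ellipticity of $\mathbf{I}+\m_1\otimes\m_1$, Poincar\'e's inequality and the $L^\infty$ bounds on the $\m_i$ and $\nabla p_2$, an elliptic estimate of the form $\|\nabla p\|_{L^2}\le C\|\m\|_{L^2}$ with $C$ independent of $t$ (but possibly dependent on $\kappa$ through the uniform bounds). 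This is the key auxiliary control that closes the loop: the ``new'' pressure difference is controlled by the conductance difference.

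\textbf{Energy inequality for $\m$.} Subtracting the first equations, testing by $\m$, and integrating by parts gives
\begin{align*}
\frac12\frac{d}{dt}\|\m\|_{L^2}^2+\kappa\|\nabla\m\|_{L^2}^2+\int_\Omega\big(|\m_1|^{2(\gamma-1)}\m_1-|\m_2|^{2(\gamma-1)}\m_2\big)\cdot\m\,dx=\int_\Omega\big[(\m_1\cdot\nabla p_1)\nabla p_1-(\m_2\cdot\nabla p_2)\nabla p_2\big]\cdot\m\,dx.
\end{align*}
The reaction term on the left is nonnegative when $\gamma\ge1$ (monotonicity of $z\mapsto|z|^{2(\gamma-1)}z$), so it can be dropped. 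On the right, one splits the cubic difference as
\begin{align*}
(\m_1\cdot\nabla p_1)\nabla p_1-(\m_2\cdot\nabla p_2)\nabla p_2=(\m\cdot\nabla p_1)\nabla p_1+(\m_2\cdot\nabla p)\nabla p_1+(\m_2\cdot\nabla p_2)\nabla p,
\end{align*}
and estimates each term using H\"older, the uniform $L^\infty$ bounds on $\m_i$, $\nabla p_i$, $\m_i\cdot\nabla p_i$, and — crucially — the elliptic bound $\|\nabla p\|_{L^2}\le C\|\m\|_{L^2}$ to absorb the factors $\nabla p$. This produces $\frac{d}{dt}\|\m\|_{L^2}^2\le C\|\m\|_{L^2}^2$ with $C$ depending on $\kappa$ and the uniform bounds; since $\m(\cdot,0)=\bf0$, Gronwall's inequality forces $\m\equiv\bf0$, hence $p\equiv0$, for all $t>0$.

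\textbf{Role of large $\kappa$ and main obstacle.} I expect the main subtlety to be in making the elliptic estimate $\|\nabla p\|_{L^2}\le C\|\m\|_{L^2}$ genuinely usable: the constant $C$ inherits dependence on $\|\m_i\|_{L^\infty}$ through both the ellipticity constant $(1+\|\m_1\|_{L^\infty}^2)^{-1}$ and the source term $\|\m_1\otimes\m_1-\m_2\otimes\m_2\|\,\|\nabla p_2\|_{L^\infty}$, so one must check that these are controlled by the $\kappa$-uniform bounds of Lemma \ref{le-ubd} rather than blowing up. This is presumably where the threshold $\hat\kappa_1\ge\hat\kappa_0$ enters — one needs $\kappa$ large enough both so that Lemma \ref{le-H2P} applies (giving the uniform $H^2$ bound for $\m$, whence smallness or at least boundedness of the relevant norms) and so that the $\kappa\|\nabla\m\|_{L^2}^2$ term, if one chooses to retain it rather than discard it, can absorb certain cross terms; the cleanest route is to discard it and simply track the $\kappa$-dependence of the Gronwall constant, which is harmless since we only need finiteness on every finite time interval. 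A secondary technical point is justifying the integration by parts and the manipulation $\m\cdot\nabla p_t=(\m\cdot\nabla p)_t-\m_t\cdot\nabla p$ type identities at the regularity of classical solutions, which is immediate here since both solutions are classical.
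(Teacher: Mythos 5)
Your proposal is correct and takes a genuinely different (and somewhat simpler) route than the paper. Both proofs subtract the two equations, use the monotonicity of $z\mapsto|z|^{2(\gamma-1)}z$ to drop the reaction term, and estimate $\nabla\delta p$ via the difference of the elliptic equations, but the technical mechanism for closing the estimate differs. The paper bounds $\|\delta\m\|_{L^4}$ by $\|\nabla\delta\m\|_{L^2}$ via Sobolev/Poincar\'e and uses $L^4$ bounds on $\nabla p_i$ and $\partial_i\m_i$; this puts $\|\nabla\delta\m\|_{L^2}^2$ on the right-hand side, which is then absorbed into the dissipation $\kappa\|\nabla\delta\m\|_{L^2}^2$ on the left. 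That absorption is the reason the paper needs a threshold $\hat\kappa_1\ge\hat\kappa_0$ strictly larger than what Lemma \ref{le-H2P} itself requires, and in return yields the stronger conclusion that $\|\delta\m\|_{L^2}^2$ is actually nonincreasing. You instead lean on the $L^\infty$ bounds on $\m_i$, $\nabla p_i$, $\m_i\cdot\nabla p_i$ furnished by Lemma \ref{le-ubd} (via $H^2\hookrightarrow L^\infty$, $H^3\hookrightarrow W^{1,\infty}$), obtain the auxiliary control $\|\nabla\delta p\|_{L^2}\le C\|\delta\m\|_{L^2}$ from coercivity of $\mathbf{I}+\m_1\otimes\m_1$, and close with a Gronwall inequality at the $\|\delta\m\|_{L^2}^2$ level. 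This discards the $\kappa\|\nabla\delta\m\|_{L^2}^2$ dissipation entirely and therefore needs no absorption, so the threshold can be taken as $\hat\kappa_1=\hat\kappa_0$; the trade-off is that you only get uniqueness, not contractivity of the difference. One remark: the paper's absorption-based structure is not a gratuitous complication---it is reused almost verbatim in Lemma \ref{le-gPH1} for uniqueness of the \emph{stationary} solution, where there is no time derivative and Gronwall is unavailable, so the coercive $\kappa\|\nabla\delta\m_\infty\|_{L^2}^2$ term must do the work.
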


\begin{proof}
Let $(\m_1, p_1)$ and $(\m_2, p_2)$ be classical
solutions to the initial-boundary value problem \eqref{eq-ib0}, with the same initial data. For convenience, we denote
\[\delta\m=\m_1-\m_2 \quad\mathrm{and}\quad\delta p=p_1-p_2.\]
Obviously, according to \eqref{eq-ib0} and using $(\m_i\otimes\m_i)\nabla p_i=(\m_i\cdot\nabla p_i)\m_i(i=1, 2)$, 
 $(\delta\m,\delta p)$ satisfies the following initial-boundary value problem
\begin{equation}\label{eq-asy}
 \left\{
\begin{split}
&(\delta\m)_t-\kappa\Delta\delta\m+\big(|\m_1|^{2(\gamma-1)}\m_1-|\m_2|^{2(\gamma-1)}\m_2\big)\\
&\qquad=(\m_1\cdot\nabla p_1)\nabla p_1-(\m_2\cdot\nabla p_2)\nabla p_2,&x\in\Omega,\,\,t>0,\\
&-\Delta \delta p=\nabla\cdot\left[(\m_1\cdot\nabla p_1)\m_1-(\m_2\cdot\nabla p_2)\m_2\right],&x\in\Omega,\,\,t>0,\\
&\delta\m(x,0)={\bf0},&x\in\Omega,\\
&\delta\m={\bf0},\quad \delta p=0,&x\in\partial\Omega,\,\,t>0.
\end{split}
\right.
\end{equation}
Taking $L^2$ inner product of the first equation in \eqref{eq-asy}  with $\delta \m$, we obtain
\begin{align}\label{eq-asy-1}
&\frac12\frac{d}{dt}\int_\Omega|\delta\m|^2dx+\kappa\int_\Omega|\nabla\delta\m|^2dx+\int_\Omega\big(|\m_1|^{2(\gamma-1)}\m_1-|\m_2|^{2(\gamma-1)}\m_2\big)\cdot\delta\m dx\nonumber\\
=&\int_\Omega\big((\m_1\cdot\nabla p_1)\nabla p_1-(\m_2\cdot\nabla p_2)\nabla p_2\big)\cdot\delta \m dx.
\end{align}
Here, we recall an elementary inequality using convexity and superlinearity, see for instance \cite{Oden1986}, that for any $x,y\in\R^2$
 \begin{align}\label{eq-ele1}
\left(|x|^{2(\gamma-1)}x-|y|^{2(\gamma-1)}y\right)\cdot(x-y)\geq C|x-y|^{2\gamma},\quad \gamma\geq1,
\end{align}
for some $C$ depending only on $\gamma$.
By virtue of this, we have
\begin{align}\label{eq-asy-2}
\int_\Omega\big(|\m_1|^{2(\gamma-1)}\m_1-|\m_2|^{2(\gamma-1)}\m_2\big)\cdot\delta\m dx\geq C\|\delta \m\|_{L^{2\gamma}}^{2\gamma}\ge0.
\end{align}
Moreover, an application of  H\"{o}lder's inequality and Young's inequality leads to
\begin{align*}
&\int_\Omega\big((\m_1\cdot\nabla p_1)\nabla p_1-(\m_2\cdot\nabla p_2)\nabla p_2\big)\cdot\delta \m dx\\
=&\int_\Omega\big((\delta\m\cdot\nabla p_1)\nabla p_1+(\m_2\cdot\nabla\delta p)\nabla p_1+(\m_2\cdot\nabla p_2)\nabla\delta p\big)\cdot\delta \m dx\\
\leq&\|\delta\m\|_{L^4}^2\|\nabla p_1\|_{L^4}^2+\|\m_2\|_{L^\infty}\|\nabla\delta p\|_{L^2}\|\nabla p_1\|_{L^4}\|\delta \m\|_{L^4}+\|\m_2\|_{L^\infty}\|\nabla p_2\|_{L^4}\|\nabla\delta p\|_{L^2}\|\delta \m\|_{L^4}\\
\leq&\|\delta\m\|_{L^4}^2\big(\|\nabla p_1\|_{L^4}^2+\|\m_2\|_{L^\infty}^2\|\nabla p_1\|_{L^4}^2+\|\m_2\|_{L^\infty}^2\|\nabla p_2\|_{L^4}^2\big)+\frac12\|\nabla\delta p\|_{L^2}^2.
\end{align*}
Next, thanks to  $\m_i|_{\partial\Omega}={\bf0}$ and $p_i|_{\partial\Omega}=0$ for $i\in\{1, 2\}$,  the Poincar\'{e}'s inequality implies
\[\|\delta\m\|_{L^4}\le C_1\|\nabla \delta\m\|_{L^2},\]
the Sobolev embedding $H^2\hookrightarrow \hookrightarrow L^\infty$ and the elliptic estimate \eqref{eq-elliptic2} entail that 
\[\|\m_2\|_{L^\infty}\le C_2\|\m_2\|_{H^2}\le C_3\|\Delta\m_2\|_{L^2},\]
and the Sobolev embedding $H^1\hookrightarrow \hookrightarrow L^4$ and the elliptic estimates \eqref{eq-elliptic1} assure that
\[\|\nabla p_i\|_{L^4}\le C_4\|\nabla p_i\|_{H^1}\le C_5\|\Delta p_i\|_{L^2},\]
where $C_j>0$ for $j\in\{1, 2, 3, 4, 5\}$ depending only on $\Omega$,  these together lead to
\begin{align}\label{eq-asy-3}
&\int_\Omega\big((\m_1\cdot\nabla p_1)\nabla p_1-(\m_2\cdot\nabla p_2)\nabla p_2\big)\cdot\delta \m dx\nonumber\\
\le& C_6\|\nabla \delta\m\|_{L^2}^2\big(\|\Delta p_1\|_{L^2}^2+\|\Delta\m_2\|_{L^2}^2\|\Delta p_1\|_{L^2}^2+\|\Delta\m_2\|_{L^2}^2\|\Delta p_2\|_{L^2}^2\big)+\frac12\|\nabla\delta p\|_{L^2}^2
\end{align}
with some $C_6>0$ depending only on $\Omega$. Combining \eqref{eq-asy-1}, \eqref{eq-asy-2} and \eqref{eq-asy-3}, we arrive at
\begin{align}\label{eq-umL2}
&\frac{d}{dt}\int_\Omega|\delta\m|^2dx+2\kappa\int_\Omega|\nabla\delta\m|^2dx\nonumber\\
\leq&2C_6\|\nabla\delta \m\|_{L^2}^2\big(\|\Delta p_1\|_{L^2}^2+\|\Delta\m_2\|_{L^2}^2\|\Delta p_1\|_{L^2}^2+\|\Delta\m_2\|_{L^2}^2\|\Delta p_2\|_{L^2}^2\big)+\|\nabla\delta p\|_{L^2}^2\nonumber\\
\le&2C_6\|\nabla\delta \m\|_{L^2}^2\big(\|\Delta p_1\|_{L^2}^2+\|\Delta p_2\|_{L^2}^2\big)\big(1+\|\Delta\m_2\|_{L^2}^2\big)+\|\nabla\delta p\|_{L^2}^2.
\end{align}

Similarly, to estimate $\|\nabla\delta p\|_{L^2}$, we test the second equation in \eqref{eq-asy} by $\delta p$, and use the  identity 
\[(\m_1\cdot\nabla p_1)\m_1-(\m_2\cdot\nabla p_2)\m_2=(\m_1\cdot\nabla\delta p)\m_1+(\delta\m\cdot\nabla p_2)\m_1+(\m_2\cdot\nabla p_2)\delta\m\]
and H\"{o}lder's inequality to get
\begin{align*}
\|\nabla\delta p\|_{L^2}^2+\|\m_1\cdot\nabla\delta p\|_{L^2}^2
=&-\int_\Omega(\delta\m\cdot\nabla p_2)\m_1\cdot\nabla\delta p dx-\int_\Omega(\m_2\cdot\nabla p_2)\delta \m\cdot\nabla\delta pdx\\
\le&\|\delta\m\|_{L^4}\|\nabla p_2\|_{L^4}\|\m_1\|_{L^\infty}\|\nabla\delta p\|_{L^2} +\|\m_2\|_{L^\infty}\|\nabla p_2\|_{L^4}\|\delta\m\|_{L^4}\|\nabla\delta p\|_{L^2},
\end{align*}
which, combined with Young's inequality, Sobolev's embedding, Poincar\'{e}'s inequality and elliptic estimates again, guarantees there exists $C_7>0$ dependening only on $\Omega$ such that
\begin{align}\label{eq-umL2-0}
\|\nabla\delta p\|_{L^2}^2\leq& C_7 \|\nabla\delta \m\|_{L^2}^2\|\Delta p_2\|_{L^2}^2\big(\|\Delta\m_1\|_{L^2}^2+\|\Delta\m_2\|_{L^2}^2\big).
\end{align}
Inserting it into \eqref{eq-umL2} and using \eqref{eq-ph2}, we attain that
\begin{align*}
\frac{d}{dt}\int_\Omega|\delta\m|^2dx +& 2\kappa\int_\Omega|\nabla\delta\m|^2dx\\
&\leq C_8 \big(\|\Delta p_1\|_{L^2}^2+\|\Delta p_2\|_{L^2}^2\big)\big(1+\|\Delta\m_1\|_{L^2}^2+\|\Delta\m_2\|_{L^2}^2\big)\|\nabla\delta \m\|_{L^2}^2\\
&\leq C_9 \big(1+(1+\kappa^{-1})(\|\Delta\m_1\|_{L^2}^2+\|\Delta\m_2\|_{L^2}^2)\big)\big(1+\|\Delta\m_1\|_{L^2}^2+\|\Delta\m_2\|_{L^2}^2\big)\|\nabla\delta \m\|_{L^2}^2\\
&\leq  C_{10} \big(1+(1+\kappa^{-2})(\|\Delta\m_1\|_{L^2}^4+\|\Delta\m_2\|_{L^2}^4)\big)\|\nabla\delta \m\|_{L^2}^2
\end{align*}
with some $C_8,\,C_9,\,C_{10}>0$ independent of $\kappa$ and $t$, which, combined with \eqref{eq-H2in} and \eqref{eq-H2ing}, ensures that for $\kappa\ge\hat{\kappa}_0$ (determined by Lemma \ref{le-H2P}) there exists $C_{11}>0$ independent of $t$ and $\kappa$ such that
\begin{align*}
\frac{d}{dt}\int_\Omega|\delta\m|^2dx+2\kappa\int_\Omega|\nabla\delta\m|^2dx\leq& C_{11} (1+\kappa^{-2}g^4(\kappa,\gamma))\|\nabla\delta \m\|_{L^2}^2,
\end{align*}
 where $g(\kappa,\gamma)=\kappa^{-\frac12}$ if $\gamma=1$ and $g(\kappa,\gamma)=\kappa^{-\frac12-\frac{4\gamma^2-3\gamma+2}{3}-\frac{3}{8(\gamma-1)}}$  if $\gamma>1$. Hence, there must exist $\hat{\kappa}_1\geq\hat{\kappa}_0$ such that if $\kappa\geq\hat{\kappa}_1$, then $C_{11}(1+\kappa^{-2})g^4(\kappa,\gamma)\leq\kappa$, and thereby
\begin{align*}
\frac{d}{dt}\int_\Omega|\delta\m|^2dx+\kappa\int_\Omega|\nabla\delta\m|^2dx\leq0.
\end{align*}
It entails that
\[
\int_\Omega|\delta\m(\cdot,t)|^2dx+\kappa\int_0^t\int_\Omega|\nabla\delta\m|^2dx\leq\|\delta\m_0\|_{L^2}=0,
\]
 which implies $\int_\Omega|\delta\m(\cdot,t)|^2dx=0$ and $\int_\Omega|\nabla\delta\m|^2dx=0$, the latter combined with \eqref{eq-umL2-0} further ensures  $\|\nabla\delta p\|_{L^2}^2=0$. This fact together with Poincar\'{e}'s inequality allow us to deduce that $\delta\m={\bf0}$ and $\delta p=0$, namely, the classical solution of the initial-boundary value problem \eqref{eq-ib0} is unique.
\end{proof}

\section{Uniqueness of stationary solutions}\label{se-stss}
In this section we plan on exploring the uniqueness  of  stationary solutions when $\kappa$ is suitably large. As a preparation for our analysis,  we recall the existence of  classical stationary solutions, see \cite{Xu2018}.

\begin{lemma}\label{le-sexit}
If $S\in H^2$, then the stationary problem  \eqref{eq-ss} possesses at least one classical solution $(\m_\infty,p_\infty)$ in the sense that $\m_\infty\in\left( \mathcal{C}^{2+\alpha}(\overline{\Omega})\right)^2$ and $p_\infty\in \mathcal{C}^{2+\alpha}(\overline{\Omega})$ for some $\alpha\in(0,1)$.
\end{lemma}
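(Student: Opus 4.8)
The plan is to proceed in two stages: first produce a weak solution of \eqref{eq-ss} by a direct variational argument, and then bootstrap its regularity up to $\mathcal{C}^{2+\alpha}$ using two-dimensional elliptic theory together with the compensated-compactness structure of the nonlinearities, as in \cite{Haskovec2015,Xu2018}.

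For the first stage, to each $\m\in(H^1_0(\Omega))^2$ associate the unique $p[\m]\in H^1_0(\Omega)$ --- furnished by the Lax--Milgram lemma, since $\mathbf{I}+\m\otimes\m\geq\mathbf{I}$ is uniformly elliptic --- solving $-\nabla\cdot[(\mathbf{I}+\m\otimes\m)\nabla p[\m]]=S$. For each fixed $\m$, the functional $q\mapsto L[\m,q]:=\frac{\kappa}{2}\int_\Omega|\nabla\m|^2+\frac{1}{2\gamma}\int_\Omega|\m|^{2\gamma}-\frac{1}{2}\int_\Omega(\mathbf{I}+\m\otimes\m)\nabla q\cdot\nabla q+\int_\Omega Sq$ is strictly concave and coercive with unique maximiser $q=p[\m]$, and the Euler--Lagrange system of $L$ is exactly \eqref{eq-ss}; hence I would minimise over $(H^1_0(\Omega))^2$ (which embeds into $L^{2\gamma}(\Omega)$ for every finite $\gamma$ in two dimensions) the reduced functional
\begin{equation*}
J[\m]:=\max_{q\in H^1_0(\Omega)}L[\m,q]=\mathcal{E}[\m,p[\m]]=\frac{\kappa}{2}\int_\Omega|\nabla\m|^2\,dx+\frac{1}{2\gamma}\int_\Omega|\m|^{2\gamma}\,dx+\frac{1}{2}\int_\Omega S\,p[\m]\,dx.
\end{equation*}
Coercivity of $J$ is immediate from the diffusion term; for weak lower semicontinuity, a minimising sequence $\m_n\rightharpoonup\m_\infty$ in $(H^1_0(\Omega))^2$ converges strongly in every $L^q(\Omega)$, $q<\infty$, hence $\m_n\otimes\m_n\to\m_\infty\otimes\m_\infty$ strongly in each such $L^q$; testing the $p$-equation shows $\{p[\m_n]\}$ is bounded in $H^1_0$, and passing to the limit in its weak form through a weak$\times$strong pairing identifies the limit as $p[\m_\infty]$, so that $\int_\Omega S\,p[\m_n]\to\int_\Omega S\,p[\m_\infty]$. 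Thus $\m_\infty$ is a minimiser, and the envelope (Danskin) principle, applicable since the inner maximum is attained uniquely at $p[\m_\infty]$, shows that $(\m_\infty,p_\infty):=(\m_\infty,p[\m_\infty])$ satisfies $\partial_\m L=\partial_q L=0$, i.e.\ solves the weak form of \eqref{eq-ss}; moreover $J[\m_\infty]\le J[\mathbf{0}]<\infty$ forces $\m_\infty\cdot\nabla p_\infty\in L^2(\Omega)$, and in two dimensions $\m_\infty\in L^q(\Omega)$ for all $q<\infty$.

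The second stage, where the real difficulty lies, is to raise this weak solution to a classical one. Two obstructions collide: the permeability $\mathbf{I}+\m_\infty\otimes\m_\infty$ in the $p$-equation has entries only in $\bigcap_{q<\infty}L^q(\Omega)$ and not in $L^\infty$, so neither De~Giorgi--Nash--Moser nor Calder\'on--Zygmund theory is directly available; while the source $(\m_\infty\cdot\nabla p_\infty)\nabla p_\infty$ of the $\m_\infty$-equation lies a priori only in $L^1(\Omega)$. The pivotal intermediate goal --- achieved in \cite{Xu2018} via the weakly-monotone-function theory and Hardy-space/div--curl arguments, exploiting that $\nabla p_\infty$ is curl-free while the flux $(\mathbf{I}+\m_\infty\otimes\m_\infty)\nabla p_\infty$ has controlled divergence, so that the relevant bilinear quantities lie in the Hardy space $\mathcal{H}^1(\mathbb{R}^2)$ rather than merely in $L^1$ and Wente-type estimates become available --- is to show $\m_\infty\in L^\infty(\Omega)$ (the stationary counterpart of \cite[Eq.~(9)]{Xu2019} and \cite{Xu2023JDE}). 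Once $\m_\infty\in L^\infty$, the permeability is bounded and uniformly elliptic, De~Giorgi--Nash--Moser gives $p_\infty\in\mathcal{C}^{\alpha}$ and then $\nabla p_\infty\in L^s$ for every $s<\infty$, so the source of the $\m_\infty$-equation lies in $L^{s/2}$ and $\m_\infty\in W^{2,s/2}\hookrightarrow\mathcal{C}^{1+\alpha}$; feeding this back gives $p_\infty\in\mathcal{C}^{2+\alpha}$ by Schauder (the coefficients now being H\"older), and then the first equation gives $\m_\infty\in\mathcal{C}^{2+\alpha}$, uniformly up to $\partial\Omega$ since the boundary is smooth and the data vanish there.

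I expect the main obstacle to be exactly that first step: obtaining $\m_\infty\in L^\infty$, equivalently extracting any gain past $\nabla p_\infty\in L^2$. Both the elliptic coefficients and the cubic source sit precisely at the borderline where classical elliptic theory has no slack, so the proof is forced to use the two-dimensional compensated-compactness structure --- which is also why the result is genuinely two-dimensional: the Hardy-space/Wente bonus is tied to the conformal invariance of the Dirichlet integral in $\mathbb{R}^2$ and has no analogue in $\mathbb{R}^3$, so the regularity of stationary solutions, like that of the evolutionary ones (cf.\ \cite{Xu2023JDE}), remains open in three dimensions.
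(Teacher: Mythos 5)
The paper offers no proof of this lemma at all: it is stated as a recall of known results, with the preceding sentence pointing to \cite{Xu2018} (and, implicitly, to \cite{Haskovec2015} for the underlying weak existence, as explained in the Introduction). Your proposal reconstructs precisely those cited arguments: the min-max variational construction of a weak solution is, modulo presentation, the Haskovec--Markowich--Perthame scheme, and the upgrade to classical regularity via Hardy-space/div--curl and weakly-monotone-function methods is exactly Xu's route. In that sense the content of your outline is consistent with, and more explicit than, what the paper provides.

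One step in your bootstrap should be repaired. Once $\m_\infty\in L^\infty$, the permeability $\mathbf{I}+\m_\infty\otimes\m_\infty$ is only \emph{bounded measurable} and elliptic; De~Giorgi--Nash--Moser then gives $p_\infty\in\mathcal{C}^{\alpha}$, but it does \emph{not} give $\nabla p_\infty\in L^s$ for all $s<\infty$ --- Calder\'on--Zygmund $W^{1,s}$ theory requires continuity (or at least VMO) of the coefficients, which at this stage you do not yet have. The correct ordering is: use Meyers' theorem to get $\nabla p_\infty\in L^{2+\varepsilon}$ for some $\varepsilon>0$ depending only on the ellipticity ratio; then, since $\m_\infty\in L^\infty$, the source $(\m_\infty\cdot\nabla p_\infty)\nabla p_\infty$ lies in $L^{1+\varepsilon/2}$, so $\m_\infty\in W^{2,1+\varepsilon/2}$, which in two dimensions embeds into $W^{1,q}$ for some $q>2$ and hence into $\mathcal{C}^{\beta}$; only now are the coefficients of the $p$-equation H\"older continuous, and only now does Calder\'on--Zygmund (and then Schauder) deliver $\nabla p_\infty\in L^s$ for every $s$, from which the rest of your bootstrap to $\mathcal{C}^{2+\alpha}$ proceeds as you describe. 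The conclusion is unchanged, but your "and then $\nabla p_\infty\in L^s$ for every $s<\infty$" skips the Meyers step and the intermediate H\"older upgrade of $\m_\infty$, which is where the argument would actually fail if written as stated.
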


Similar to the proof of uniqueness of classical solutions to the initial-boundary value problem \eqref{eq-ib0}, in obtaining the uniqueness of classical stationary solutions  the dependence on $\kappa$ of the $H^2$-norm of solutions plays a significant role.  
Since classical solutions $(\m_\infty, p_\infty)$  to the stationary problem \eqref{eq-ss} can be considered as classical solutions to the initial-boundary value problem \eqref{eq-ib0} with initial data $(\m_0, p_0)=(\m_\infty, p_\infty)$, therefore estimates for solutions to \eqref{eq-ib0} that are independent of initial data are also valid for stationary solutions; however, estimates depending on initial data cannot be used directly.

\begin{lemma}\label{le-sPL6t}
There exists a positive constant $C$, independent of  $\kappa$, such that
\begin{align}\label{eq-sHL6t}
\|\nabla p_\infty\|_{L^2}+\|\m_\infty\cdot\nabla p_\infty\|_{L^2}\leq C,\\
\label{eq-sHL6t0}
\kappa\|\nabla\m_\infty\|_{L^2}^2+ \|\m_\infty\|_{L^{2\gamma}}^{2\gamma}\leq C.
\end{align}
\end{lemma}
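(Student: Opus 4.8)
The plan is to mimic the energy computations behind Lemma~\ref{le-PL6t}, but to apply them directly to the stationary system \eqref{eq-ss} rather than to a time evolution; this turns out to be even simpler, because no initial-data contribution appears and hence the $\kappa$-dependence improves. First I would test the second equation of \eqref{eq-ss} by $p_\infty$. Since $(\mathbf I+\m_\infty\otimes\m_\infty)\nabla p_\infty\cdot\nabla p_\infty=|\nabla p_\infty|^2+(\m_\infty\cdot\nabla p_\infty)^2$ and $p_\infty|_{\partial\Omega}=0$, integration by parts yields
\[
\int_\Omega|\nabla p_\infty|^2\,dx+\int_\Omega(\m_\infty\cdot\nabla p_\infty)^2\,dx=\int_\Omega p_\infty S\,dx .
\]
Estimating the right-hand side by H\"older's inequality, Poincar\'e's inequality (using $p_\infty|_{\partial\Omega}=0$) and Young's inequality exactly as in the proof of \eqref{eq-HL6t}, one absorbs $\tfrac12\|\nabla p_\infty\|_{L^2}^2$ into the left-hand side and obtains \eqref{eq-sHL6t} with a constant depending only on $\Omega$ and $\|S\|_{L^2}$, hence independent of $\kappa$.

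Next I would test the first equation of \eqref{eq-ss} by $\m_\infty$. Using $\m_\infty|_{\partial\Omega}=\mathbf 0$ to integrate the Laplacian term by parts, the reaction term contributes $\int_\Omega|\m_\infty|^{2(\gamma-1)}\m_\infty\cdot\m_\infty\,dx=\int_\Omega|\m_\infty|^{2\gamma}\,dx$, while the right-hand side becomes $\int_\Omega(\m_\infty\cdot\nabla p_\infty)(\nabla p_\infty\cdot\m_\infty)\,dx=\int_\Omega(\m_\infty\cdot\nabla p_\infty)^2\,dx$. Therefore
\[
\kappa\int_\Omega|\nabla\m_\infty|^2\,dx+\int_\Omega|\m_\infty|^{2\gamma}\,dx=\int_\Omega(\m_\infty\cdot\nabla p_\infty)^2\,dx ,
\]
and the right-hand side is already controlled by \eqref{eq-sHL6t}. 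This immediately gives \eqref{eq-sHL6t0}.

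\textbf{Main point to watch.} There is essentially no serious obstacle here: the computation is routine once one observes that $\m_\infty\in(\mathcal C^{2+\alpha}(\overline\Omega))^2$ and $p_\infty\in\mathcal C^{2+\alpha}(\overline\Omega)$ by Lemma~\ref{le-sexit}, so all integrations by parts are legitimate and the boundary terms drop because of the homogeneous Dirichlet conditions. The only feature worth emphasising is that, in contrast to the time-dependent bound \eqref{eq-HL6t0-1}, whose right-hand side is $C(1+\kappa)$ owing to the $\tfrac\kappa2\|\nabla\m_0\|_{L^2}^2$ term coming from the initial data, the stationary identity has no such term; consequently $\kappa\|\nabla\m_\infty\|_{L^2}^2$ is bounded by a constant independent of $\kappa$. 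This uniform-in-$\kappa$ gain is precisely what will be exploited when tracking the explicit $\kappa$-dependence of the higher-order quantities in the subsequent analysis (cf. Lemma~\ref{le-sPH2}).
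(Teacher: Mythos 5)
Your proof is correct and takes essentially the same route as the paper: for \eqref{eq-sHL6t} the paper simply observes that the constant in \eqref{eq-HL6t} is independent of the initial data and $\kappa$ (which is the same energy identity you re-derive by testing the second equation with $p_\infty$), and for \eqref{eq-sHL6t0} the paper likewise tests the first stationary equation by $\m_\infty$ to obtain $\kappa\|\nabla\m_\infty\|_{L^2}^2+\|\m_\infty\|_{L^{2\gamma}}^{2\gamma}=\|\m_\infty\cdot\nabla p_\infty\|_{L^2}^2$ and then invokes \eqref{eq-sHL6t}. Your remark about why one cannot directly cite \eqref{eq-HL6t0-1} (initial-data dependence) matches the paper's own explanation.
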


\begin{proof}
Since from the proof of  \eqref{eq-HL6t} one can see the constant $C$ in \eqref{eq-HL6t} is indeed independent of the initial data and $\kappa$, thus we have \eqref{eq-sHL6t} as desired. However, we cannot directly obtain \eqref{eq-sHL6t0} from \eqref{eq-HL6t0-1} because  the constant $C$ therein actually depends on initial data. Therefore, let us give a brief proof of it.

Testing the first equation in \eqref{eq-ss} by $\m_\infty$, we have
\[
 \kappa\int_\Omega|\nabla\m_\infty|^2dx+\int_\Omega|\m_\infty|^{2\gamma}dx= \int_\Omega(\m_\infty\cdot\nabla p_\infty)^2dx,
\]
which, using \eqref{eq-sHL6t}, entails \eqref{eq-sHL6t0} immediately.
\end{proof}

On the basis of Lemma \ref{le-sPL6t}, we have the following estimates.

 \begin{lemma}\label{le-sPH2}
 There exists $\widehat{\kappa}>0$, with the property that, whenever $\kappa\geq\widehat{\kappa}$,
one can find $C>0$, independent of  $\kappa$, such that
\begin{align}\label{eq-sHH2}
\kappa\|\Delta\m_\infty\|_{L^2}^2+\|\Delta p_\infty\|_{L^2}^2+\|\nabla(\m_\infty\cdot\nabla p_\infty)\|_{L^2}^2\leq C(1+\kappa^{-1}).
\end{align}
\end{lemma}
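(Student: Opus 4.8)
The plan is to mimic the combined strategy of Lemma \ref{le-ph2} and Lemma \ref{le-ph3}, but applied to the stationary system \eqref{eq-ss} and crucially tracking that now there is \emph{no} time-dependent forcing term: the analogue of the parabolic evolution identity for $\|\Delta\m(\cdot,t)\|_{L^2}^2$ becomes a genuine \emph{identity} for $\kappa\|\Delta\m_\infty\|_{L^2}^2$, with no $\frac{d}{dt}$ term to absorb. Concretely, first I would test the first equation in \eqref{eq-ss} by $\Delta^2\m_\infty$ (equivalently, differentiate and test by $-\Delta\partial_i\m_\infty$, or just test by $\Delta\m_\infty$ after taking $\Delta$ of the equation), exploiting the boundary condition $\Delta\m_\infty|_{\partial\Omega}=\kappa^{-1}\big[|\m_\infty|^{2(\gamma-1)}\m_\infty-(\m_\infty\cdot\nabla p_\infty)\nabla p_\infty\big]_{\partial\Omega}={\bf 0}$ (the stationary analogue of \eqref{eq-bmc}, which holds because $\m_\infty|_{\partial\Omega}={\bf 0}$) to kill all boundary integrals. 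This yields
\[
\kappa\|\Delta\m_\infty\|_{L^2}^2 + (\text{lower order from }|\m_\infty|^{2(\gamma-1)}\m_\infty\text{ if }\gamma>1) = -\sum_{i=1}^2\int_\Omega\big(\partial_i(\m_\infty\cdot\nabla p_\infty)\nabla p_\infty + (\m_\infty\cdot\nabla p_\infty)\nabla\partial_i p_\infty\big)\cdot\partial_i\Delta\m_\infty\,dx,
\]
after an integration by parts moving one derivative off $\Delta\m_\infty$; here I would instead integrate by parts so that the right-hand side involves $\|\nabla\Delta\m_\infty\|_{L^2}$ paired against products of $\nabla(\m_\infty\cdot\nabla p_\infty)$, $\nabla p_\infty$, $\m_\infty\cdot\nabla p_\infty$ and $D^2p_\infty$, exactly as in the bound leading to \eqref{eq-esmH3-} but \emph{without} the $\frac{d}{dt}\|\Delta\m_\infty\|_{L^2}^2$ term. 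Since the elliptic estimate \eqref{eq-elliptic2} gives $\|\nabla\Delta\m_\infty\|_{L^2}\lesssim\|\m_\infty\|_{H^3}$ is not available directly, I would instead keep the dissipation $\kappa\|\nabla\Delta\m_\infty\|_{L^2}^2$ on the left (this term does appear when testing $\Delta$ of the equation by $\Delta\m_\infty$ with two integrations by parts), matching the structure of \eqref{eq-esmH3-}--\eqref{eq-esmH3-0} but with the time-derivative term simply absent.

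Next, I would import the stationary versions of Lemma \ref{le-ph2} and Lemma \ref{le-ph3}: re-running those proofs verbatim with $(\m,p)$ replaced by $(\m_\infty,p_\infty)$ and using the uniform bounds \eqref{eq-sHL6t}, \eqref{eq-sHL6t0} in place of \eqref{eq-HL6t}, \eqref{eq-HL6t0}, one obtains
\[
\|\Delta p_\infty\|_{L^2} + \|\nabla(\m_\infty\cdot\nabla p_\infty)\|_{L^2} \leq C\big(1+(1+\kappa^{-\frac12})\|\Delta\m_\infty\|_{L^2}\big)
\]
and
\[
\|\nabla\Delta p_\infty\|_{L^2} + \|\Delta(\m_\infty\cdot\nabla p_\infty)\|_{L^2} \leq C\big(1+(1+\kappa^{-1})\|\Delta\m_\infty\|_{L^2}^2\big),
\]
with the same constants (these proofs never used the evolution equation for $\m$, only the elliptic equation for $p$ and the uniform $L^2$-type bounds, both of which have exact stationary counterparts). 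For the $\gamma>1$ case, I would additionally need the stationary analogue of the $\Pi$-estimate in Lemma \ref{le-H2P}: the term $\int_\Omega\Delta\m_\infty\cdot\Delta(|\m_\infty|^{2(\gamma-1)}\m_\infty)$ is controlled after integration by parts by $(2\gamma-1)\|\m_\infty\|_{L^\infty}^{2(\gamma-1)}\|\nabla\m_\infty\|_{L^2}\|\nabla\Delta\m_\infty\|_{L^2}$, and then $\|\m_\infty\|_{L^\infty}^{2(\gamma-1)}$ is bounded via the Gagliardo--Nirenberg inequality \eqref{GNineq-5}, the Poincar\'e inequality and \eqref{eq-elliptic2} by $C\|\nabla\m_\infty\|_{L^2}^{\frac{2q(\gamma-1)}{q+2}}\|\Delta\m_\infty\|_{L^2}^{\frac{4(\gamma-1)}{q+2}}$, using \eqref{eq-sHL6t0} to extract powers of $\kappa^{-1}$; Young's inequality absorbs the $\|\nabla\Delta\m_\infty\|_{L^2}$ factor into $\frac\kappa2\|\nabla\Delta\m_\infty\|_{L^2}^2$.

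Then I would substitute the two displayed $p_\infty$-estimates into the right-hand side of the $\kappa\|\Delta\m_\infty\|_{L^2}^2$ identity, use the Gagliardo--Nirenberg inequality \eqref{GNineq-4} together with \eqref{eq-esmH3} to write $\|\Delta\m_\infty\|_{L^2}^4\lesssim\|\nabla\m_\infty\|_{L^2}^2(\|\Delta\m_\infty\|_{L^2}^2+\|\nabla\Delta\m_\infty\|_{L^2}^2)$ and invoke \eqref{eq-sHL6t0} so that $\|\nabla\m_\infty\|_{L^2}^2\lesssim\kappa^{-1}$, exactly as in \eqref{eq-esmH3-0}. This produces an inequality of the shape
\[
\kappa\|\Delta\m_\infty\|_{L^2}^2 + \tfrac\kappa2\|\nabla\Delta\m_\infty\|_{L^2}^2 \leq \frac{C}{\kappa}\Big(1+(1+\kappa^{-3})\big(\|\Delta\m_\infty\|_{L^2}^2+\|\nabla\Delta\m_\infty\|_{L^2}^2\big)\Big) + (\text{a }\gamma>1\text{ term absorbable for }\kappa\text{ large}),
\]
and for $\kappa\geq\widehat\kappa$ large enough the factor $C\kappa^{-1}(1+\kappa^{-3})$ is $\leq\frac14\min\{\kappa,\kappa^2\}$, so the $\|\Delta\m_\infty\|_{L^2}^2$ and $\|\nabla\Delta\m_\infty\|_{L^2}^2$ contributions on the right are absorbed into the left, leaving $\kappa\|\Delta\m_\infty\|_{L^2}^2\lesssim\kappa^{-1}$, i.e.\ $\|\Delta\m_\infty\|_{L^2}^2\lesssim\kappa^{-2}$. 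Feeding this back into the stationary Lemma \ref{le-ph2}/\ref{le-ph3} bounds gives $\|\Delta p_\infty\|_{L^2}^2+\|\nabla(\m_\infty\cdot\nabla p_\infty)\|_{L^2}^2\lesssim 1+\kappa^{-1}$, and combined with $\kappa\|\Delta\m_\infty\|_{L^2}^2\lesssim\kappa^{-1}\le 1+\kappa^{-1}$ this is precisely \eqref{eq-sHH2}. The main obstacle I anticipate is the bookkeeping of $\kappa$-powers in the $\gamma>1$ case —ensuring the exponent coming from \eqref{GNineq-5} and \eqref{eq-sHL6t0} is such that the resulting term is genuinely absorbable (i.e.\ has a strictly negative power of $\kappa$ after Young's inequality) for all $\gamma>1$, which forces the specific choice of the free exponent $q$ as in the proof of Lemma \ref{le-H2P}; everything else is a direct transcription of the parabolic arguments with the $\frac{d}{dt}$ term deleted.
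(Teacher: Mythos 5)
Your proposal would ultimately yield the lemma, but it takes a substantially heavier route than the paper and contains a few computational slips. The paper never goes to the $H^3$ level for the stationary problem. It tests the first equation in \eqref{eq-ss} by $-\Delta\m_\infty$ (producing $\kappa\|\Delta\m_\infty\|_{L^2}^2$ directly, not the dissipation $\kappa\|\nabla\Delta\m_\infty\|_{L^2}^2$), and then \emph{adds} the resulting identity to the stationary version of \eqref{eq-ph2-t}. The crucial observation is that the two product terms $\sum_i\int_\Omega(\m_\infty\cdot\nabla p_\infty)\nabla\partial_ip_\infty\cdot\partial_i\m_\infty\,dx$ appear with opposite signs in the two identities and cancel exactly. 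What remains is a single term $2\sum_i\int\partial_i(\m_\infty\cdot\nabla p_\infty)\,\nabla p_\infty\cdot\partial_i\m_\infty\,dx$ plus $\int S\Delta p_\infty\,dx$, which is estimated by H\"older, the Gagliardo--Nirenberg inequality \eqref{GNineq-1}, the elliptic estimates, and the uniform bounds \eqref{eq-sHL6t}--\eqref{eq-sHL6t0}. Moreover the cubic term $\int|\m_\infty|^{2(\gamma-1)}\m_\infty\cdot(-\Delta\m_\infty)\,dx$ is nonnegative after one integration by parts, so it can simply be dropped. Consequently the paper's argument closes at the $H^2$ level, requires no stationary analogue of Lemma \ref{le-ph3}, no $\|\nabla\Delta\m_\infty\|_{L^2}^2$ dissipation, and no separate treatment of $\gamma>1$ (no $\Pi$-term, no free exponent $q$, no $\kappa$-bookkeeping for absorption of the cubic contribution). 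Your plan, in contrast, transcribes the whole $H^3$ machinery of Lemma \ref{le-H2P}, re-derives both Lemma \ref{le-ph2} and Lemma \ref{le-ph3} for the stationary system, uses the boundary condition $\Delta\m_\infty|_{\partial\Omega}={\bf 0}$ to kill boundary integrals in a bi-Laplacian test, and carries the $\gamma>1$ exponent bookkeeping. This is a much larger investment for the same conclusion, and in fact the paper's cancellation trick is precisely what makes the separate Lemma \ref{le-ph3}-analogue unnecessary.

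Two concrete slips in your write-up: first, your leading display equates $\kappa\|\Delta\m_\infty\|_{L^2}^2$ (which arises from the $-\Delta\m_\infty$ test) with a right-hand side involving $\partial_i\Delta\m_\infty$ (which arises from the $\Delta^2\m_\infty$ test); these do not come from the same integration by parts. Second, in your final absorption inequality the term on the left should read $\|\Delta\m_\infty\|_{L^2}^2+\tfrac{\kappa}{2}\|\nabla\Delta\m_\infty\|_{L^2}^2$ rather than $\kappa\|\Delta\m_\infty\|_{L^2}^2+\tfrac{\kappa}{2}\|\nabla\Delta\m_\infty\|_{L^2}^2$ (the $\kappa$-weight on the $\|\Delta\m_\infty\|_{L^2}^2$ contribution is not produced by the $\Delta^2$ test of the stationary equation; it is a constant-coefficient term coming from the cubic nonlinearity, as in \eqref{eq-esmH3-0}). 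This does not break the argument---one still gets $\|\Delta\m_\infty\|_{L^2}^2\lesssim\kappa^{-1}$, hence $\kappa\|\Delta\m_\infty\|_{L^2}^2\lesssim 1$, which is enough for \eqref{eq-sHH2}---but the claimed stronger bound $\|\Delta\m_\infty\|_{L^2}^2\lesssim\kappa^{-2}$ does not follow from the $\Delta^2$-test route as written. It does follow from the paper's $-\Delta\m_\infty$-test route, which is one more indication that the paper's lean argument is the natural one here.
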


\begin{proof}
According to \eqref{eq-ph2-t}, we first have
\begin{align}\label{eq-sph2}
\int_\Omega|\Delta p_\infty|^2dx\,+ &\int_\Omega|\nabla(\m_\infty\cdot\nabla p_\infty)|^2dx\\
=&
\sum_{i=1}^2\int_\Omega\big[\partial_i(\m_\infty\cdot\nabla p_\infty)\partial_i\m_\infty\cdot\nabla p_\infty-(\m_\infty\cdot\nabla p_\infty)\partial_i\m_\infty\cdot\nabla\partial_i p_\infty dx-\int_\Omega S\Delta p_\infty dx.\nonumber
\end{align}
Next, taking  the $L^2$ inner product of the first equation in \eqref{eq-ss}   with
$-\Delta\m_\infty$ and  integrating by parts, we obtain
\begin{align*}
\kappa\|\Delta\m_\infty\|_{L^2}^2\,+&\int_\Omega|\m_\infty|^{2(\gamma-1)}\m_\infty\cdot(-\Delta\m_\infty)dx\\
=&\sum_{i=1}^2\int_\Omega\partial_i(\m_\infty\cdot\nabla p_\infty)\nabla p_\infty\cdot\partial_i\m_\infty+(\m_\infty\cdot\nabla p_\infty)\nabla\partial_i p_\infty\cdot\partial_i\m_\infty dx,
\end{align*}
which, combined  with \eqref{eq-sph2}, ensures
\begin{align*}
\kappa\|\Delta\m_\infty\|_{L^2}^2\,+&\int_\Omega|\m_\infty|^{2(\gamma-1)}\m_\infty\cdot(-\Delta\m_\infty)dx+\int_\Omega|\Delta p_\infty|^2dx+\int_\Omega|\nabla(\m_\infty\cdot\nabla p_\infty)|^2dx\\
=&2\sum_{i=1}^2\int_\Omega\partial_i(\m_\infty\cdot\nabla p_\infty)\nabla p_\infty\cdot\partial_i\m_\infty dx-\int_\Omega S\Delta p_\infty dx.
\end{align*}
A direct calculation shows that
\begin{align*}
\int_\Omega |\m_\infty|^{2(\gamma-1)}&\m_\infty\cdot(-\Delta\m_\infty)dx\\
 =&\int_\Omega |\m_\infty|^{2(\gamma-1)}|\nabla\m_\infty|^2 dx+2(\gamma-1)\sum_{i=1}^2\int_\Omega |\m_\infty|^{2(\gamma-2)}(\m_\infty\cdot\partial_i\m_\infty)^2dx\geq0,
\end{align*}
due to $\gamma\geq1$.  Hence, it follows from H\"{o}lder's inequality that
\begin{align*}
\kappa\|\Delta\m_\infty\|_{L^2}^2\,&+\int_\Omega|\Delta p_\infty|^2dx+\int_\Omega|\nabla(\m_\infty\cdot\nabla p_\infty)|^2dx\\
&\leq 2\sum_{i=1}^2\|\partial_i(\m_\infty\cdot\nabla p_\infty)\|_{L^2}\|\nabla p_\infty\|_{L^4}\|\partial_i\m_\infty\|_{L^4}+\|S\|_{L^2}\|\Delta p_\infty\|_{L^2}\\
&\le 2\|\nabla(\m_\infty\cdot\nabla p_\infty)\|_{L^2}\|\nabla p_\infty\|_{L^4}\|\nabla\m_\infty\|_{L^4}+\|S\|_{L^2}\|\Delta p_\infty\|_{L^2}
\end{align*}
which, together with Young's inequality and  the Gagliardo-Nirenberg inequality \eqref{GNineq-1}, entails
\begin{align*}
I:=\kappa\|\Delta\m_\infty\|_{L^2}^2+\frac12\int_\Omega|\Delta p_\infty|^2dx+&\frac12\int_\Omega|\nabla(\m_\infty\cdot\nabla p_\infty)|^2dx\\
&\leq 2\|\nabla p_\infty\|_{L^4}^2\|\nabla\m_\infty\|_{L^4}^2+\frac12\|S\|_{L^2}^2\\
&\leq C_1\|\nabla p_\infty\|_{L^2}\|\nabla p_\infty\|_{H^1}\|\nabla\m_\infty\|_{L^2}\|\nabla\m_\infty\|_{H^1}+C_2
\end{align*}
with some $C_1,\,C_2>0$ independent of $\kappa$. A combination  with \eqref{eq-sHL6t} and \eqref{eq-sHL6t0} yields  $C_3>0$ independent of $\kappa$ such that
\begin{align*}
I\leq C_3(1+\kappa^{-\frac12})\left(\|\nabla p_\infty\|_{H^1}\|\nabla\m_\infty\|_{H^1}+1\right).
\end{align*}
Based on this, employing the elliptic estimates \eqref{eq-elliptic1} and \eqref{eq-elliptic2} as well as Young's inequality again, we can find $C_4,\,C_5>0$, independent of $\kappa$, such that
\begin{align*}
I\leq C_4(1+\kappa^{-\frac12})\left(\|\Delta p_\infty\|_{L^2}\|\Delta\m_\infty\|_{L^2}+1\right)
\leq \frac14\|\Delta p_\infty\|_{L^2}^2+C_5(1+\kappa^{-1})\left(\|\Delta\m_\infty\|_{L^2}^2+1\right).
\end{align*}
Now we can pick $\kappa_4>0$ such that whenever $\kappa\geq \kappa_4$, then $C_5(1+\kappa^{-1})\leq \frac\kappa2$, and thereby
\begin{align*}
I=\frac\kappa2\|\Delta\m_\infty\|_{L^2}^2+\frac14\int_\Omega|\Delta p_\infty|^2dx+\frac12\int_\Omega|\nabla(\m_\infty\cdot\nabla p_\infty)|^2dx
\leq C_5(1+\kappa^{-1}),
\end{align*}
and thus finishes the proof of Lemma \ref{le-sPH2}.
\end{proof}

In light of Lemma \ref{le-sPL6t} and Lemma \ref{le-sPH2},  it is easy to show the uniqueness of the stationary solution.

\begin{lemma}\label{le-gPH1}
Let $\widehat{\kappa}$ be determined by Lemma \ref{le-sPH2}. Then there exists $\widehat{\kappa}_1\geq\widehat{\kappa}$ with the property that, whenever $\kappa\geq\widehat{\kappa}_1$, the solution of the stationary problem
 \eqref{eq-ss}
 is unique.
\end{lemma}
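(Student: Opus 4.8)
The plan is to run the comparison argument of Lemma~\ref{le-umL2c} on two stationary solutions and then to close the estimate with the $\kappa$-explicit $H^2$-bounds of Lemma~\ref{le-sPH2}; the bounds of Lemma~\ref{le-H2P} are not available here because they depend on $\|\Delta\m_0\|_{L^2}$.

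Concretely, I would let $(\m_{\infty,1},p_{\infty,1})$ and $(\m_{\infty,2},p_{\infty,2})$ be two classical solutions of \eqref{eq-ss} and set $\delta\m=\m_{\infty,1}-\m_{\infty,2}$, $\delta p=p_{\infty,1}-p_{\infty,2}$. Subtracting the two copies of \eqref{eq-ss} and using $(\m_{\infty,i}\otimes\m_{\infty,i})\nabla p_{\infty,i}=(\m_{\infty,i}\cdot\nabla p_{\infty,i})\m_{\infty,i}$, the pair $(\delta\m,\delta p)$ solves, in $\Omega$,
\begin{equation*}
\left\{
\begin{split}
&-\kappa\Delta\delta\m+\big(|\m_{\infty,1}|^{2(\gamma-1)}\m_{\infty,1}-|\m_{\infty,2}|^{2(\gamma-1)}\m_{\infty,2}\big)\\
&\hspace{2.5cm}=(\m_{\infty,1}\cdot\nabla p_{\infty,1})\nabla p_{\infty,1}-(\m_{\infty,2}\cdot\nabla p_{\infty,2})\nabla p_{\infty,2},\\
&-\Delta\delta p=\nabla\cdot\big[(\m_{\infty,1}\cdot\nabla p_{\infty,1})\m_{\infty,1}-(\m_{\infty,2}\cdot\nabla p_{\infty,2})\m_{\infty,2}\big],
\end{split}
\right.
\end{equation*}
together with $\delta\m={\bf0}$, $\delta p=0$ on $\partial\Omega$.

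Testing the first equation by $\delta\m$, the elementary monotonicity inequality \eqref{eq-ele1} renders the reaction term nonnegative, so $\kappa\|\nabla\delta\m\|_{L^2}^2$ is controlled by $\int_\Omega\big((\m_{\infty,1}\cdot\nabla p_{\infty,1})\nabla p_{\infty,1}-(\m_{\infty,2}\cdot\nabla p_{\infty,2})\nabla p_{\infty,2}\big)\cdot\delta\m\,dx$; expanding this difference as $(\delta\m\cdot\nabla p_{\infty,1})\nabla p_{\infty,1}+(\m_{\infty,2}\cdot\nabla\delta p)\nabla p_{\infty,1}+(\m_{\infty,2}\cdot\nabla p_{\infty,2})\nabla\delta p$ and bounding each term exactly as in the derivation of \eqref{eq-asy-3} (H\"{o}lder's inequality, the embeddings $H^2\hookrightarrow L^\infty$ and $H^1\hookrightarrow L^4$, Poincar\'{e}'s inequality, and the elliptic estimates \eqref{eq-elliptic1}, \eqref{eq-elliptic2}) leads to
\begin{align*}
\kappa\|\nabla\delta\m\|_{L^2}^2\leq C\big(\|\Delta p_{\infty,1}\|_{L^2}^2+\|\Delta p_{\infty,2}\|_{L^2}^2\big)\big(1+\|\Delta\m_{\infty,1}\|_{L^2}^2+\|\Delta\m_{\infty,2}\|_{L^2}^2\big)\|\nabla\delta\m\|_{L^2}^2+\frac12\|\nabla\delta p\|_{L^2}^2.
\end{align*}
Testing the second equation by $\delta p$ and arguing as for \eqref{eq-umL2-0}, with the splitting $(\m_{\infty,1}\cdot\nabla p_{\infty,1})\m_{\infty,1}-(\m_{\infty,2}\cdot\nabla p_{\infty,2})\m_{\infty,2}=(\m_{\infty,1}\cdot\nabla\delta p)\m_{\infty,1}+(\delta\m\cdot\nabla p_{\infty,2})\m_{\infty,1}+(\m_{\infty,2}\cdot\nabla p_{\infty,2})\delta\m$, one obtains $\|\nabla\delta p\|_{L^2}^2\leq C\|\nabla\delta\m\|_{L^2}^2\|\Delta p_{\infty,2}\|_{L^2}^2\big(\|\Delta\m_{\infty,1}\|_{L^2}^2+\|\Delta\m_{\infty,2}\|_{L^2}^2\big)$. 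Now Lemma~\ref{le-sPH2} gives, for $\kappa\geq\widehat{\kappa}$, the bounds $\|\Delta p_{\infty,i}\|_{L^2}^2\leq C(1+\kappa^{-1})$ and $\|\Delta\m_{\infty,i}\|_{L^2}^2\leq C\kappa^{-1}(1+\kappa^{-1})$ for $i\in\{1,2\}$, so all the bracketed factors above are bounded by a constant independent of $\kappa$ once $\kappa\geq\max\{\widehat{\kappa},1\}$; combining the two inequalities then yields $\kappa\|\nabla\delta\m\|_{L^2}^2\leq C_\ast\|\nabla\delta\m\|_{L^2}^2$ with $C_\ast$ independent of $\kappa$. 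Choosing $\widehat{\kappa}_1\geq\widehat{\kappa}$ with $\widehat{\kappa}_1>C_\ast$ forces $\nabla\delta\m\equiv{\bf0}$ whenever $\kappa\geq\widehat{\kappa}_1$, hence $\delta\m\equiv{\bf0}$ by Poincar\'{e}'s inequality; the $\delta p$-estimate then gives $\nabla\delta p\equiv0$ and thus $\delta p\equiv0$, which is the asserted uniqueness.

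I expect the only genuine issue to be conceptual rather than computational: the constant $C_\ast$ is merely bounded, not small, so one cannot conclude unconditionally — the largeness condition $\kappa\geq\widehat{\kappa}_1$ is unavoidable with this method, which is exactly why the statement is phrased as a threshold condition. It is also crucial to feed in Lemma~\ref{le-sPH2}, whose $\kappa$-dependence is explicit and independent of the initial data, rather than the time-dependent bounds of Lemma~\ref{le-H2P}; once that input is in hand, the rest is a routine transcription of the manipulations already carried out in Lemma~\ref{le-umL2c}.
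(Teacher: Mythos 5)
Your proposal is correct and follows essentially the same route as the paper: reuse the difference estimates \eqref{eq-umL2} and \eqref{eq-umL2-0} from the uniqueness proof for the time-dependent problem (dropping the $\frac{d}{dt}$ term), then close them with the initial-data-free, $\kappa$-explicit bounds from Lemma~\ref{le-sPH2} rather than Lemma~\ref{le-H2P}, and absorb the resulting $\kappa$-independent constant by taking $\kappa$ large. Your observation that Lemma~\ref{le-H2P} cannot be used here because its constant depends on $\|\Delta\m_0\|_{L^2}$ is precisely the key point the paper makes at the start of Section~3.
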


\begin{proof}
 Assume that $(\m_{1\infty},p_{1\infty})$ and  $(\m_{2\infty},p_{2\infty})$ are classical solutions of the stationary problem \eqref{eq-ss}.
Setting $\delta\m_\infty:=\m_{1\infty}-\m_{2\infty}$ and $\delta p_\infty:=p_{1\infty}-p_{2\infty}$, it is clear that $(\delta\m_\infty,\delta p_\infty)$ satisfies the following elliptic problem
\begin{equation}\label{eq-delss}
\left\{
\begin{split}
&- \kappa\Delta\delta\m_{1\infty}+|\m_{1\infty}|^{2(\gamma-1)}\m_{1\infty}-|\m_{2\infty}|^{2(\gamma-1)}\m_{2\infty}\\
&\quad \quad\quad \quad\quad = (\m_{1\infty}\cdot\nabla p_{1\infty})\nabla p_{1\infty}-(\m_{2\infty}\cdot\nabla p_{2\infty})\nabla p_{2\infty},& x\in \Omega,\\
&-\Delta\delta p_\infty-\nabla\cdot\left[(\m_{1\infty}\cdot\nabla p_{1\infty})\m_{1\infty}-(\m_{2\infty}\cdot\nabla p_{2\infty})\m_{2\infty}\right]=0,& x\in\Omega,\\
&\delta\m_\infty={\bf0},\,\,\,\delta p_\infty=0,& x\in \partial\Omega.
\end{split}
\right.
\end{equation}
Along the proof of Lemma \ref{le-umL2c}, we can easily obtain from \eqref{eq-umL2} that
\begin{align}\label{eq-degmi}
\kappa\|\nabla\delta\m_{\infty}\|_{L^2}^2
\le C_1\|\nabla\delta \m_{\infty}\|_{L^2}^2\big(\|\Delta p_{1 \infty}\|_{L^2}^2+\|\Delta p_{2\infty}\|_{L^2}^2\big)\big(1+\|\Delta\m_{2\infty}\|_{L^2}^2\big)+\frac12\|\nabla\delta p_{\infty}\|_{L^2}^2
\end{align}
for some $C_1>0$ depending only on $\Omega$. In addition, according to \eqref{eq-umL2-0} we have
\begin{align*}
\|\nabla\delta p_\infty\|_{L^2}^2\leq& C_2 \|\nabla\delta \m_\infty\|_{L^2}^2\|\Delta p_{2\infty}\|_{L^2}^2\big(\|\Delta\m_{1\infty}\|_{L^2}^2+\|\Delta\m_{2\infty}\|_{L^2}^2\big)
\end{align*}
for some $C_2>0$ also depending only on $\Omega$. Combining this and \eqref{eq-degmi}, we arrive at
\begin{align*}
\kappa\|\nabla\delta\m_{\infty}\|_{L^2}^2\,&+\frac12\|\nabla\delta p_\infty\|_{L^2}^2\\
&\leq C_3\|\nabla\delta \m_{\infty}\|_{L^2}^2\big(\|\Delta p_{1 \infty}\|_{L^2}^2+\|\Delta p_{2\infty}\|_{L^2}^2\big)\big(1+\|\Delta\m_{1\infty}\|_{L^2}^2+\|\Delta\m_{2\infty}\|_{L^2}^2\big)
\end{align*}
for $C_3=\max\{C_1, C_2\}$. Thanks to \eqref{eq-sHH2}, whenever $\kappa>\widehat{\kappa}$, there exists $C_4>0$ independent of $\kappa$ such that 
\[
\|\Delta\m_{i\infty}\|_{L^2}^2\le C_4\kappa^{-1}(1+\kappa^{-1})
\]
and
\[
\|\Delta p_{i\infty}\|_{L^2}^2\le C_4(1+\kappa^{-1})
\]
for $i\in\{1, 2\}$. These together lead to
    \begin{align*}
\kappa\|\nabla\delta\m_{\infty}\|_{L^2}^2+\frac12\|\nabla\delta p_\infty\|_{L^2}^2
\leq& C_5\|\nabla\delta \m_{\infty}\|_{L^2}^2(1+\kappa^{-1})\big(1+\kappa^{-1}(1+\kappa^{-1})\big)
\end{align*}
for some $C_5>0$ independent of $\kappa$. There definitely exists $\widehat{\kappa}_1\geq\widehat{\kappa}$ such that 
\[
C_5(1+\kappa^{-1})\big(1+\kappa^{-1}(1+\kappa^{-1})\big)\le\frac{\kappa}2
\]
whenever $\kappa\ge\widehat{\kappa}_1$,  we thus have
 \[
\kappa\|\nabla\delta\m_{\infty}\|_{L^2}^2+\|\nabla\delta p_\infty\|_{L^2}^2\le 0.
\]
This, together with Poincar\'{e}'s inequality, immediately implies
\begin{align*}
\kappa\|\delta\m_{\infty}\|_{L^2}^2+\|\delta p_\infty\|_{L^2}^2
\leq&0,
\end{align*}
which ensures the uniqueness, and thus finishes the proof of Lemma \ref{le-gPH1}.
\end{proof}

Based on  Lemma \ref{le-gPH1}, we can reveal the unique stationary solution is semi-trivial.

\begin{corollary}\label{le-gPHs}
Let $\widehat{\kappa}_1$ be determined by Lemma \ref{le-gPH1}. If $\kappa\geq\widehat{\kappa}_1$, then the unique classical solution of the stationary problem  \eqref{eq-ss} is the semi-trivial steady state $({\bf0},p_\infty^\ast)$, where $p_\infty^\ast$ is the unique solution of the elliptic problem \eqref{eq-ss0}.
\end{corollary}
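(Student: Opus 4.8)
The plan is to prove the corollary by a direct verification that the semi-trivial pair $(\mathbf{0},p_\infty^\ast)$ solves the stationary problem \eqref{eq-ss}, and then to invoke the uniqueness already established in Lemma \ref{le-gPH1}. Thus the corollary is a short consequence of the harder work done earlier; the only points requiring a word are the well-posedness and regularity of $p_\infty^\ast$ and the check that inserting $\m_\infty=\mathbf{0}$ is consistent with every term of \eqref{eq-ss}.

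First I would note that since $\Omega\subset\mathbb{R}^2$ has smooth boundary and $S\in H^2(\Omega)$, the Sobolev embedding $H^2(\Omega)\hookrightarrow\mathcal{C}^{0,\alpha}(\overline{\Omega})$ (valid for every $\alpha\in(0,1)$ in two dimensions) places $S$ in a H\"{o}lder space, so classical Schauder theory for the Dirichlet Laplacian yields a unique solution $p_\infty^\ast\in\mathcal{C}^{2+\alpha}(\overline{\Omega})$ of \eqref{eq-ss0}. Next I would substitute $\m_\infty=\mathbf{0}$, $p_\infty=p_\infty^\ast$ into \eqref{eq-ss}: in the first equation the diffusion term $-\kappa\Delta\m_\infty$, the reaction term $|\m_\infty|^{2(\gamma-1)}\m_\infty$ (which equals $\mathbf{0}$ at $\m_\infty=\mathbf{0}$ precisely because $\gamma\geq1$), and the forcing $(\m_\infty\cdot\nabla p_\infty)\nabla p_\infty$ all vanish, so it is satisfied identically; in the second equation $\mathbf{I}+\m_\infty\otimes\m_\infty=\mathbf{I}$, so it reduces to $-\Delta p_\infty^\ast=S$, which holds by construction; and the boundary conditions $\m_\infty|_{\partial\Omega}=\mathbf{0}$, $p_\infty^\ast|_{\partial\Omega}=0$, together with the claimed $\mathcal{C}^{2+\alpha}$ regularity, are immediate. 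Hence $(\mathbf{0},p_\infty^\ast)$ is a classical solution of \eqref{eq-ss} in the sense of Lemma \ref{le-sexit}.

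Finally, by Lemma \ref{le-gPH1}, for $\kappa\geq\widehat{\kappa}_1$ the stationary problem \eqref{eq-ss} possesses exactly one classical solution; since $(\mathbf{0},p_\infty^\ast)$ is one, it must coincide with it, which is the assertion. I do not expect any genuine obstacle in this step: the entire difficulty is concentrated in the $\kappa$-explicit estimates of Lemma \ref{le-sPH2} and the resulting uniqueness of Lemma \ref{le-gPH1}, while the present argument is only a consistency check together with a standard elliptic regularity statement for $p_\infty^\ast$.
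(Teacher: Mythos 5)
Your proof is correct and follows the same route as the paper's: verify that $(\mathbf{0},p_\infty^\ast)$ solves \eqref{eq-ss} and invoke the uniqueness from Lemma \ref{le-gPH1}. The paper dispatches the verification in one line ("it is clear"), while you usefully spell out the $H^2\hookrightarrow\mathcal{C}^{0,\alpha}$ embedding and Schauder regularity giving $p_\infty^\ast\in\mathcal{C}^{2+\alpha}(\overline{\Omega})$, and the role of $\gamma\ge 1$ in making the reaction term vanish at $\m_\infty=\mathbf{0}$; these are fair details to include but do not change the argument.
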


\begin{proof}
It is clear that $({\bf0},p_\infty^\ast)$ is a  solution of the stationary problem \eqref{eq-ss}. Using the uniqueness of solution of the latter, we conclude as desired.
\end{proof}

\section{Stability of steady state. Proof of Theorem \ref{th-global}}\label{se-lim}

The aim of this final section is to show the global asymptotic stability of the semi-trivial steady state $({\bf0},p_\infty^\ast)$,  where $p_\infty^\ast$ is the unique classical solution of the elliptic problem \eqref{eq-ss0}.

\begin{lemma}\label{le-umL2s}
There exists $\overline{\kappa}>0$ such that if $\kappa\geq\overline{\kappa}$, then the semi-trivial steady state $({\bf0},p_\infty^\ast)$ is globally asymptotic stable, where $p_\infty^\ast$ is the unique classical solution of the elliptic problem \eqref{eq-ss0}. Moreover, there exist $C>0$ (independent of $t$) and $\mu>0$ (independent of $\kappa$ and $t$) such that
\begin{align}\label{eq-demlin}
\|\m(\cdot,t)\|_{L^\infty}+\|p-p_\infty^\ast\|_{H^2}\leq Ce^{-\mu\kappa t},\quad t>0.
\end{align}
\end{lemma}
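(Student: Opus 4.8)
Following the strategy sketched in the introduction, I would monitor $y(t):=\|\m(\cdot,t)\|_{L^2}^2$, prove a differential inequality $y'(t)+\mu\kappa\,y(t)\le0$ for $\kappa$ large, and then upgrade the resulting $L^2$-decay of $\m$ to the $L^\infty$- and $H^2$-decay claimed in \eqref{eq-demlin} by interpolating against the uniform-in-time bounds of Section~\ref{se-global}.

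The decisive — and, I expect, only delicate — step is to control the cubic forcing $\int_\Omega(\m\cdot\nabla p)^2dx$ by $\|\m\|_{L^2}^2$ with a constant independent of $\kappa$; a direct estimate would cost a factor $\|\nabla p\|_{L^\infty}$, which is only known to be uniformly bounded, not small. Instead I set $w:=p-p_\infty^\ast$; subtracting \eqref{eq-ss0} from the second equation of \eqref{eq-ib0} and using $(\m\otimes\m)\nabla p=(\m\cdot\nabla p)\m$ gives
\[
-\Delta w=\nabla\cdot\big[(\m\cdot\nabla p)\,\m\big]\ \text{ in }\ \Omega,\qquad w=0\ \text{ on }\ \partial\Omega .
\]
Testing by $w$ and inserting $\m\cdot\nabla p=\m\cdot\nabla w+\m\cdot\nabla p_\infty^\ast$ produces the identity $\|\nabla w\|_{L^2}^2+\|\m\cdot\nabla w\|_{L^2}^2=-\int_\Omega(\m\cdot\nabla p_\infty^\ast)(\m\cdot\nabla w)\,dx$. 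Because $S\in H^2$ and $\partial\Omega$ is smooth, elliptic regularity gives $p_\infty^\ast\in H^4\hookrightarrow C^1(\overline\Omega)$, so $L:=\|\nabla p_\infty^\ast\|_{L^\infty}<\infty$; Young's inequality, Poincaré's inequality for $w$, and the bound $\|\m\cdot\nabla p_\infty^\ast\|_{L^2}\le L\|\m\|_{L^2}$ then yield a constant $C_\ast>0$ independent of $\kappa$ and $t$ with
\[
\|w\|_{H^1}^2+\int_\Omega(\m\cdot\nabla p)^2dx\le C_\ast\|\m\|_{L^2}^2 .
\]

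Next I test the first equation of \eqref{eq-ib0} by $\m$; using $\m|_{\partial\Omega}={\bf0}$ and $(\m\cdot\nabla p)\nabla p\cdot\m=(\m\cdot\nabla p)^2$ this gives
\[
\tfrac12 y'(t)+\kappa\|\nabla\m\|_{L^2}^2+\|\m\|_{L^{2\gamma}}^{2\gamma}=\int_\Omega(\m\cdot\nabla p)^2dx\le C_\ast\,y(t),\qquad \gamma\ge1 .
\]
Dropping the nonnegative term $\|\m\|_{L^{2\gamma}}^{2\gamma}$ and using Poincaré's inequality $\|\m\|_{L^2}^2\le C_P\|\nabla\m\|_{L^2}^2$ leads to $\tfrac12 y'+(\kappa C_P^{-1}-C_\ast)y\le0$. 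Choosing $\overline\kappa$ larger than $2C_\ast C_P$ and than the thresholds $\hat\kappa_0$ of Lemma~\ref{le-H2P} and $\widehat\kappa_1$ of Lemma~\ref{le-gPH1}, one has $\kappa C_P^{-1}-C_\ast\ge\tfrac{\kappa}{2C_P}$ whenever $\kappa\ge\overline\kappa$, so Grönwall's lemma yields $y(t)\le y(0)e^{-\mu_0\kappa t}$ with $\mu_0:=(2C_P)^{-1}$ independent of $\kappa$ and $t$; in particular $\|\m(\cdot,t)\|_{L^2}\le\|\m_0\|_{L^2}e^{-\frac{\mu_0}{2}\kappa t}$.

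For the bootstrap, by Lemma~\ref{le-ubd} the norms $\|\m(\cdot,t)\|_{H^2}$ and $\|p(\cdot,t)\|_{H^3}$ are bounded uniformly in $t$ for $\kappa\ge\overline\kappa$, and $\|p_\infty^\ast\|_{H^3}<\infty$. Agmon's inequality in two dimensions gives $\|\m\|_{L^\infty}\le C\|\m\|_{L^2}^{1/2}\|\m\|_{H^2}^{1/2}\le Ce^{-\frac{\mu_0}{4}\kappa t}$, and for $w=p-p_\infty^\ast$ the Gagliardo--Nirenberg interpolation $\|w\|_{H^2}\le C\|w\|_{H^1}^{1/2}\|w\|_{H^3}^{1/2}$ together with the $H^1$-bound above and $\|w\|_{H^3}\le\|p\|_{H^3}+\|p_\infty^\ast\|_{H^3}\le C$ gives $\|(p-p_\infty^\ast)(\cdot,t)\|_{H^2}\le Ce^{-\frac{\mu_0}{4}\kappa t}$ (alternatively, $\|w\|_{H^2}\le C\|\Delta w\|_{L^2}$ can be estimated directly from the equation for $w$ using the uniform bounds on $\|\nabla(\m\cdot\nabla p)\|_{L^2}$, $\|\nabla\m\|_{L^2}$, $\|\nabla p\|_{L^\infty}$ and the $L^\infty$-decay of $\m$ just obtained). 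Taking $\mu:=\mu_0/4$, independent of $\kappa$ and $t$, proves \eqref{eq-demlin}; since then $\m(\cdot,t)\to{\bf0}$ and $p(\cdot,t)\to p_\infty^\ast$ in the respective norms while $({\bf0},p_\infty^\ast)$ is the unique steady state of Corollary~\ref{le-gPHs}, the asserted global asymptotic stability follows. The one bookkeeping caveat is to take $\overline\kappa$ at least as large as every threshold ($\hat\kappa_0,\hat\kappa_1,\widehat\kappa_1$) beyond which the estimates of Sections~\ref{se-global}--\ref{se-stss} and the uniqueness statements are valid.
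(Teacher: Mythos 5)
Your proposal is correct and it reaches the same conclusion, but the key estimate is obtained by a genuinely different route. The paper bounds the cubic forcing by $\int_\Omega(\m\cdot\nabla p)^2dx\le C\|\nabla\m\|_{L^2}^2\|\Delta p\|_{L^2}^2$ and then invokes Lemmas~\ref{le-ph2} and \ref{le-H2P} to show $\|\Delta p\|_{L^2}$ is uniformly bounded for $\kappa$ large, so that the coefficient is $\kappa$-independent; it then upgrades via $\|\nabla(p-p_\infty^\ast)\|_{L^2}\le\|\m\|_{L^\infty}^2\|\nabla p\|_{L^2}$, which requires the $L^\infty$-decay of $\m$ to be established \emph{before} the $H^1$-decay of $p-p_\infty^\ast$. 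You instead use the clean subtraction trick: testing $-\Delta w=\nabla\cdot[(\m\cdot\nabla p)\m]$ by $w=p-p_\infty^\ast$ and expanding $\m\cdot\nabla p=\m\cdot\nabla w+\m\cdot\nabla p_\infty^\ast$ gives the identity $\|\nabla w\|_{L^2}^2+\|\m\cdot\nabla w\|_{L^2}^2=-\int(\m\cdot\nabla p_\infty^\ast)(\m\cdot\nabla w)$, from which Young's inequality yields both $\|\m\cdot\nabla p\|_{L^2}^2\le C_\ast\|\m\|_{L^2}^2$ and $\|\nabla w\|_{L^2}^2\le C_\ast\|\m\|_{L^2}^2$ with $C_\ast\sim\|\nabla p_\infty^\ast\|_{L^\infty}^2$ manifestly independent of $\kappa$ and $t$. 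This avoids needing the Section~2 machinery for the cubic term (though you still need it, as you note, for the GN bootstrap from $L^2$- to $L^\infty$- and $H^2$-decay), and it delivers the $H^1$-decay of $w$ simultaneously with the decay of $\|\m\|_{L^2}$ rather than as a consequence of the $L^\infty$-decay of $\m$. The choice $\mu_0=(2C_P)^{-1}$ is a slightly weaker rate than what $\tfrac12 y'+\frac{\kappa}{2C_P}y\le0$ actually gives ($C_P^{-1}$), but this is inconsequential since only some $\kappa$-independent $\mu>0$ is claimed. Both approaches are valid; yours is the more transparent derivation of the central decay inequality.
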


\begin{proof}
We test the first equation in \eqref{eq-ib0} by $\m$ to get
\[
\frac12\frac{d}{dt}\int_\Omega|\m|^2dx+\kappa\int_\Omega|\nabla\m|^2dx+\int_\Omega|\m|^{2\gamma}dx=\int_\Omega(\m\cdot\nabla p)^2dx.
\]
Using H\"{o}lder's inequality, Sobolev's inequality, Poincar\'{e}'s inequality and the elliptic estimate \eqref{eq-elliptic1}, we obtain $C_1,\,C_2>0$, independent of $\kappa$ and $t$, such that
\[
\int_\Omega(\m\cdot\nabla p)^2dx\leq\|\m\|_{L^4}^2\|\nabla p\|_{L^4}^2\leq C_1\|\nabla \m\|_{L^2}^2\|\nabla p\|_{H^1}^2\leq C_2\|\nabla\m\|_{L^2}^2\|\Delta p\|_{L^2}^2,
\]
which, combined with \eqref{eq-ph2}, \eqref{eq-H2in} and \eqref{eq-H2ing}, leads to
\[
\int_\Omega(\m\cdot\nabla p)^2dx\leq C_3\|\nabla\m\|_{L^2}^2(1+(1+\kappa^{-1})\|\Delta\m\|_{L^2}^2)\leq C_4(1+\kappa^{-1}g^2(\kappa,\gamma))\|\nabla\m\|_{L^2}^2,
\]
with some positive constants $C_3$ and $C_4$ independent of $\kappa$ and $t$, where $g(\kappa,\gamma)=\kappa^{-\frac12}$ if $\gamma=1$ and $g(\kappa,\gamma)=\kappa^{-\frac12-\frac{4\gamma^2-3\gamma+2}{3}-\frac{3}{8(\gamma-1)}}$  if $\gamma>1$.
Now, we arrive at
\[
\frac{d}{dt}\int_\Omega|\m|^2dx+2\kappa\int_\Omega|\nabla\m|^2dx\leq 2C_4(1+\kappa^{-1}g^2(\kappa,\gamma))\|\nabla\m\|_{L^2}^2.
\]
For such $C_4$ and  $\gamma\geq1$, we can find $\kappa_1\geq\widehat{\kappa}_1$ with $\widehat{\kappa}_1$ determined by  Lemma \ref{le-umL2c} such that if $\kappa\geq\kappa_1$, then $2C_4(1+\kappa^{-1}g^2(\kappa,\gamma))\leq\kappa$, and thereby obtain
\[
\frac{d}{dt}\int_\Omega|\m|^2dx+\kappa\int_\Omega|\nabla\m|^2dx\leq 0.
\]
Poincar\'{e}'s inequality enables us to get $C_5>0$, independent of $\kappa$ and $t$,  such that
\[
\frac{d}{dt}\int_\Omega|\m|^2dx+C_5\kappa\int_\Omega|\m|^2dx\leq 0,
\]
which implies
\begin{align}\label{eq-deml2-1}
\int_\Omega|\m(\cdot,t)|^2dx\leq\int_\Omega|\m_0|^2dx e^{-C_5\kappa t},\quad t>0.
\end{align}
Recalling  the Gagliardo-Nirenberg inequality \eqref{GNineq-2}
that
\[
\|\m\|_{L^\infty}\leq C_6\|\m\|_{L^2}^\frac12\|\m\|_{H^2}^\frac12
\]
with some $C_6>0$ independent of $\kappa$ and $t$, we infer from the uniform boundedness of $\|\m\|_{H^2}$ provided by Lemma \ref{le-H2P} and the estimate \eqref{eq-deml2-1} that
\begin{align}\label{eq-deml2}
\|\m(\cdot,t)\|_{L^\infty}\leq C_7\|\m\|_{L^2}^\frac12\leq C_8e^{-\frac{C_5}2\kappa t},\quad t>0
\end{align}
with $C_7,\,C_8>0$ independent of $t$.
On the other hand, thanks to \eqref{eq-ib0} and \eqref{eq-ss0}  we have
\[
-\nabla\cdot\left[(\mathbf{I}+\m\otimes\m)\nabla p\right]=-\Delta p_\infty^\ast.
\]
Testing it by $p-p_\infty^\ast$, we obtain
\[
\int_\Omega|\nabla(p-p_\infty^\ast)|^2dx=\int_\Omega\left[(\m\otimes\m)\nabla p\right]\cdot \nabla(p-p_\infty^\ast)dx,
\]
which, together with H\"{o}lder's inequality, entails
\[
\|\nabla(p-p_\infty^\ast)\|_{L^2}\leq\|(\m\otimes\m)\nabla p\|_{L^2}\leq\|\m\|_{L^\infty}^2\|\nabla p\|_{L^2}.
\]
This, together with the uniform boundedness of $\|\nabla p\|_{L^2}$ provided by \eqref{eq-HL6t} and  the estimate \eqref{eq-deml2}, implies that
\begin{align}\label{eq-deml2-2}
\|\nabla(p-p_\infty^\ast)(\cdot,t)\|_{L^2}\leq C_9 e^{-C_5\kappa t},\quad t>0
\end{align}
with some $C_9>0$ independent of $t$.  
An application of the Poincare inequality and the Gagliardo-Nirenberg inequality \eqref{GNineq-6} yields a positive constant $C_{10}$, independent of $\kappa$ and $t$, such that
\[
\|\nabla(p-p_\infty^\ast)\|_{H^1}\leq C_{10}\|\nabla(p-p_\infty^\ast)\|_{L^2}^\frac12\|\nabla(p-p_\infty^\ast)\|_{H^2}^\frac12,
\]
which, combined the elliptic estimate \eqref{eq-esp}, gives us
\[
\|\nabla(p-p_\infty^\ast)\|_{H^1}^2\leq C_{11}\|\nabla(p-p_\infty^\ast)\|_{L^2}\left(\|D^3(p-p_\infty^\ast)\|_{L^2}+\|\Delta(p-p_\infty^\ast)\|_{L^2}\right),
\]
with some $C_{11}>0$ independent of $\kappa$ and $t$. Note that for given $S\in H^1$ it is clear that $\|p_\infty^\ast\|_{H^3}\leq C_{12}\|S\|_{H^1}$ with some $C_{12}>0$ independent of $\kappa$ and $t$. Hence, we can infer from Lemmas \ref{le-ph2}, \ref{le-ph3} and \ref{le-H2P} that there exist $C_{13},\,C_{14}>0$, independent of $t$, such that
\[
\|\nabla(p-p_\infty^\ast)\|_{H^1}^2\leq C_{13}\|\nabla(p-p_\infty^\ast)\|_{L^2}\left(1+\|\Delta\m\|_{L^2}^2\right)\leq C_{14}\|\nabla(p-p_\infty^\ast)\|_{L^2},
\]
and thereby, it follows from \eqref{eq-deml2-2} that
\begin{align*}
\|\nabla(p-p_\infty^\ast)(\cdot,t)\|_{H^1}\leq C_{15}e^{-\frac{C_5}2\kappa t},\quad t>0,
\end{align*}
with some $C_{15}>0$ independent of $t$. This, combined with Poincar\'{e}'s inequality and \eqref{eq-deml2}, ensures \eqref{eq-demlin} as desired.
\end{proof}

We now have all the necessary tools to prove Theorem \ref{th-global}.

\begin{proof}[Proof of Theorem \ref{th-global}]
In essence, the  uniform-in-time boundedness \eqref{eq-bedd}, the uniqueness of classical solution, the uniqueness of the steady state, and its  global asymptotic stability \eqref{eq-ltime} follow from  Lemmas \ref{le-ubd}, \ref{le-umL2c}, \ref{le-gPH1} and \ref{le-umL2s}, respectively.
Invoking these, we finish  the proof of Theorem \ref{th-global}.
\end{proof}

\section*{Acknowledgements}
JAC was supported by the Advanced Grant Nonlocal-CPD
(Nonlocal PDEs for Complex Particle Dynamics: Phase Transitions, Patterns and Synchronization)
 of the European Research Council Executive Agency (ERC) under the European Union Horizon 2020 research and innovation programme (grant agreement No. 883363), and  partially supported by the EPSRC (grant numbers EP/T022132/1 and EP/V051121/1)
and by the ``Maria de Maeztu'' Excellence Unit IMAG, reference CEX2020-001105-M, funded
by MCIN/AEI/10.13039/501100011033/.
BL waw supported by  Natural Science Foundation of Ningbo Municipality (No. 2022J147).
 LX was supported by  National Natural Science Foundation of China (No. 11701461), Chongqing Science and Technology Commission Project (No.  CSTB2023NSCQ-MSX0411) and partially supported by  Chongqing Talent Program (No. cstc2024ycjh-bgzxm0046).
 
\appendix 
\section{Appendix}
\renewcommand{\thetable}{A\arabic{table}}

We first remind the reader the classical GNS inequalities frequently used in this work.

\begin{lemma} (Gagliardo-Nirenberg inequality \cite[p.37]{Henry1981})
    Let $\Omega$ be a bounded domain in $\mathbb{R}^n$ with smooth boundary, and let $u\in W^{m, q}\cap L^r$. Then there hold
    \begin{align}\label{GNineq}
        \|u\|_{W^{k, l}}\le C\|u\|_{W^{m, q}}^\theta\|u\|_{L^{r}}^{1-\theta}
    \end{align}
    if $k-n/l=\theta(m-n/q)-n(1-\theta)/r$, $1\le l, q, r\le \infty$, and $k/m\le\theta\le1$. The constant $C$ depends only on $\Omega, m, r, q, k$.
    In particular, in the two-dimensional case, we have
    \begin{align}\label{GNineq-1}
        \|u\|_{L^4}\le C\|u\|_{H^1}^{\frac12}\|u\|_{L^{2}}^{\frac12},\\
        \|u\|_{L^\infty}\le C\|u\|_{H^2}^{\frac12}\|u\|_{L^{2}}^{\frac12},\label{GNineq-2}\\
        \|u\|_{L^\infty}\le C\|u\|_{H^2}^{\frac{2}{q+2}}\|u\|_{L^{q}}^{\frac{q}{q+2}},\label{GNineq-5}\\
        \|\nabla u\|_{L^4}\le C\|u\|_{H^2}^{\frac34}\|u\|_{L^{2}}^{\frac14},\label{GNineq-3}\\
        \|\Delta u\|_{L^2}\le C\|\nabla u\|_{H^2}^{\frac12}\|\nabla u\|_{L^{2}}^{\frac12},\label{GNineq-4}\\
        \|  u\|_{H^1}\le C\| u\|_{H^2}^{\frac12}\|u\|_{L^{2}}^{\frac12}.\label{GNineq-6}
    \end{align}
\end{lemma}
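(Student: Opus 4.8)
The inequality \eqref{GNineq} is the classical Gagliardo--Nirenberg interpolation inequality, whose proof in the stated generality can be found in \cite{Henry1981}; the plan is to recall the structure of that proof and then obtain the six two-dimensional special cases \eqref{GNineq-1}--\eqref{GNineq-6} by a direct verification of the exponent hypotheses. For \eqref{GNineq} the standard argument proceeds in three steps. First one proves the estimate on the whole space $\R^n$ for $u\in C_c^\infty(\R^n)$ by chaining the elementary Sobolev and Morrey inequalities; here the precise exponent $\theta$ is not free but is dictated by dimensional analysis, since testing the proposed inequality on the dilations $u_\lambda(x):=u(\lambda x)$ makes both sides homogeneous in $\lambda$, and equating the powers forces exactly the relation $k-\tfrac nl=\theta(m-\tfrac nq)-\tfrac{n(1-\theta)}r$, while the restriction $k/m\le\theta\le1$ is precisely what allows the intermediate derivative norm to be interpolated. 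Second, one transfers the whole-space estimate to the bounded domain $\Omega$ with smooth boundary using a bounded linear extension operator $E\colon W^{m,q}(\Omega)\to W^{m,q}(\R^n)$ that is simultaneously bounded from $L^r(\Omega)$ into $L^r(\R^n)$: applying the $\R^n$ inequality to $Eu$ and restricting to $\Omega$ gives \eqref{GNineq} with a constant controlled by $\Omega,m,q,r,k$ through the norm of $E$. Third, density of $C^\infty(\overline\Omega)$ in $W^{m,q}(\Omega)$ removes the regularity restriction on $u$.

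It then remains to set $n=2$ and check, for each of \eqref{GNineq-1}--\eqref{GNineq-6}, that a suitable choice of parameters satisfies the hypotheses of \eqref{GNineq}. Inequality \eqref{GNineq-1} is the case $k=0$, $l=4$, $m=1$, $q=r=2$, $\theta=\tfrac12$, since then $k-\tfrac nl=-\tfrac12=\theta(m-\tfrac nq)-\tfrac{n(1-\theta)}r$ and $0=k/m\le\theta\le1$. Inequalities \eqref{GNineq-2} and \eqref{GNineq-5} correspond to $k=0$, $l=\infty$, $m=2$, Sobolev integrability $q=2$, and respectively to $r=2$, $\theta=\tfrac12$, or to $r$ equal to the Lebesgue exponent appearing there with $\theta=\tfrac{2}{r+2}$; in both cases the homogeneity identity reduces to $0=0$, the right-hand powers $\theta$ and $1-\theta$ match those written in \eqref{GNineq-5}, and the endpoint $l=\infty$ is admissible because in two dimensions $W^{2,2}(\Omega)\hookrightarrow C(\overline\Omega)$. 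Inequality \eqref{GNineq-3} is the case $k=1$, $l=4$, $m=2$, $q=r=2$, $\theta=\tfrac34$, for which $k-\tfrac nl=\tfrac12=\theta(m-\tfrac nq)-\tfrac{n(1-\theta)}r$ and $k/m=\tfrac12\le\theta\le1$; and \eqref{GNineq-6} is the plain $L^2$--$H^2$ interpolation $k=1$, $l=2$, $m=2$, $q=r=2$, $\theta=\tfrac12$. Finally, \eqref{GNineq-4} follows by applying \eqref{GNineq} componentwise to the vector field $\nabla u$ with $k=1$, $l=2$, $m=2$, $q=r=2$, $\theta=\tfrac12$, which gives $\|\nabla u\|_{W^{1,2}}\le C\|\nabla u\|_{H^2}^{1/2}\|\nabla u\|_{L^2}^{1/2}$, and then using the pointwise bound $|\Delta u|\le\sqrt n\,|D^2u|$ to conclude $\|\Delta u\|_{L^2}\le\sqrt2\,\|\nabla u\|_{W^{1,2}}$.

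The only substantive ingredient is the general inequality \eqref{GNineq}, which we quote from \cite{Henry1981}; consequently the work here is essentially bookkeeping, and the one point requiring attention is to confirm in each instance that the chosen interpolation parameter $\theta$ lies in $[k/m,1]$ and that the homogeneity relation holds with $n=2$. In particular, for the two $L^\infty$ bounds, where $l=\infty$, the validity of the borderline embedding $W^{2,2}(\Omega)\hookrightarrow L^\infty(\Omega)$ in dimension two guarantees that the left-hand side is finite and that \eqref{GNineq} indeed applies. No other difficulty arises.
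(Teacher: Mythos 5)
Your proposal is correct: the paper itself states this lemma as a quoted classical result from Henry's book without proof, and your verification of the exponent relation $k-\tfrac{n}{l}=\theta\bigl(m-\tfrac{n}{q}\bigr)-\tfrac{n(1-\theta)}{r}$ and the constraint $k/m\le\theta\le1$ (with $\theta<1$ in the borderline $l=\infty$ cases) for each of \eqref{GNineq-1}--\eqref{GNineq-6} is accurate, as is the reduction of \eqref{GNineq-4} to componentwise interpolation of $\nabla u$. This matches the paper's approach, merely filling in the routine bookkeeping the authors leave implicit.
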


For readers' convenience, let us summarize the elliptic estimates from \cite{Trudinger1983} used in our work.

\begin{lemma}\label{le-ellipticesti}
    Let $\Omega\subset\mathbb{R}^n (n\ge1)$ be a bounded domain with  smooth boundary $\partial\Omega$, $u\in H^{l+2}_0(\Omega)$, then there exists $C=C(n, l, \partial\Omega)$ such that
    \begin{align}\label{eq-elliptic}
        \|u\|_{H^{l+2}}\le C\|\Delta u\|_{H^l}.
    \end{align}
\end{lemma}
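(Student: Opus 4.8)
The plan is to reduce \eqref{eq-elliptic} to the classical a priori elliptic estimate for the Dirichlet problem, combined with an elementary absorption of the lower-order term that is made possible precisely by the homogeneous boundary condition $u|_{\partial\Omega}=0$ built into $u\in H^{l+2}_0(\Omega)$.

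First I would invoke the standard $L^2$-based elliptic regularity theory on a smooth bounded domain, as in \cite{Trudinger1983}: for every $u\in H^{l+2}(\Omega)$ with $u|_{\partial\Omega}=0$,
\[
\|u\|_{H^{l+2}}\le C\left(\|\Delta u\|_{H^l}+\|u\|_{L^2}\right),\qquad C=C(n,l,\partial\Omega).
\]
This is obtained in the usual manner: a finite partition of unity subordinate to a cover of $\overline\Omega$ localizes the problem to interior balls and to boundary half-balls after flattening $\partial\Omega$; interior estimates follow from difference quotients in all coordinate directions, and near the boundary one controls the $l+1$ tangential derivatives via tangential difference quotients and then recovers the remaining pure-normal derivatives algebraically from the equation $-\Delta u=f$ with $f:=-\Delta u$. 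Since this is entirely classical, I would cite it rather than reproduce the localization argument.

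Second, I would absorb $\|u\|_{L^2}$ into the right-hand side. Since $u\in H^{l+2}_0(\Omega)\subset H^1_0(\Omega)$, Poincar\'e's inequality gives $\|u\|_{L^2}\le C_\Omega\|\nabla u\|_{L^2}$; and multiplying $-\Delta u$ by $u$, integrating over $\Omega$ and integrating by parts (the boundary term vanishing because $u|_{\partial\Omega}=0$) yields
\[
\int_\Omega|\nabla u|^2\,dx=-\int_\Omega u\,\Delta u\,dx\le\|u\|_{L^2}\|\Delta u\|_{L^2}\le C_\Omega\|\nabla u\|_{L^2}\|\Delta u\|_{L^2},
\]
whence $\|\nabla u\|_{L^2}\le C_\Omega\|\Delta u\|_{L^2}\le C_\Omega\|\Delta u\|_{H^l}$ and thus $\|u\|_{L^2}\le C\|\Delta u\|_{H^l}$. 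Substituting this into the estimate of the first step gives $\|u\|_{H^{l+2}}\le C\|\Delta u\|_{H^l}$, which is exactly \eqref{eq-elliptic}.

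The only genuinely substantial ingredient is the classical a priori bound of the first step; everything else is a one-line energy/Poincar\'e argument. One could alternatively absorb the lower-order term by a compactness--contradiction argument --- if \eqref{eq-elliptic} failed one would find $u_k$ with $\|u_k\|_{H^{l+2}}=1$ and $\|\Delta u_k\|_{H^l}\to0$, extract $u_k\to u$ strongly in $H^{l+1}$ by Rellich, deduce $\Delta u=0$ and $u|_{\partial\Omega}=0$, hence $u=0$, contradicting the lower bound on $\|u_k\|_{L^2}$ furnished by the a priori estimate --- but this still relies on the same a priori bound, so the direct route above is cleaner. Finally, it is worth noting that the hypothesis $u\in H^{l+2}_0$ means no approximation or bootstrap is needed here; such steps would only be relevant if one wished to \emph{derive} $H^{l+2}$-regularity of a weak solution, which this statement does not require.
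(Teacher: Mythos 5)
Your proposal is correct and follows essentially the same route as the paper: both start from the classical a priori estimate $\|u\|_{H^{l+2}}\le C\left(\|\Delta u\|_{H^l}+\|u\|_{L^2}\right)$ for the homogeneous Dirichlet problem (Theorem 8.13 in \cite{Trudinger1983}) and then remove the lower-order term. The only difference is cosmetic: the paper absorbs $\|u\|_{L^2}$ by also citing the $H^2$ bound $\|u\|_{H^2}\le C\|\Delta u\|_{L^2}$ (Lemma 9.17 in \cite{Trudinger1983}), whereas you obtain the same control directly and more elementarily via integration by parts and Poincar\'e's inequality, which is perfectly adequate.
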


\begin{proof}
Consider the following elliptic equation with homogeneous Dirichlet boundary condition
\begin{equation*}
\left\{
\begin{split}
&\Delta u=f,\,\, &x\in\Omega,\\
  & u=0,\,\,& x\in\partial\Omega.
\end{split}
\right.
\end{equation*}
 It follows from \cite[Theorem 8.13, Lemma 9.17]{Trudinger1983} that for any $f\in H^l$ there exists a positive constant $C=C(n, l, \partial\Omega)$ such that
\begin{align*}
        \|u\|_{H^{l+2}}\le C(\|u\|_{L^2}+\|f\|_{H^l})
    \end{align*}
and
\begin{align*}
        \|u\|_{H^{2}}\le C\|\Delta u\|_{L^2}.
    \end{align*}
Since $\Delta u=f$, we have \eqref{eq-elliptic} as desired.
\end{proof}


\begin{thebibliography}{99}

\bibitem{Akita2017}D. Akita, I. Kunita, M. Fricker, S. Kuroda, K. Sato, T. Nakagaki,    {\it Experimental models for Murray's law}, J. Phys. D: Appl. Phys., 2017, 50(2): 024001, 11pp.

\bibitem{Albi2016} G. Albi, M. Artina, M. Fornasier, P. Markowich, {\it Biological transportation networks: modeling and simulation}, Anal.  Appl. (Singap.), 2016, 14(1): 185--206.

\bibitem{Albi2017} G. Albi, M. Burger, J. Haskovec, P. Markowich, M. Schlottbom, Continuum modeling of biological network formation, In: N. Bellomo, P. Degond, T. Tamdor. (eds.) Active Particles, vol. I. Modeling and Simulation in Science and Technology. Birkh\"{a}user, Boston, 2017: 1--48.

\bibitem{Haskovec2024Se} C. Astuto, P. Markowich, S. Portaro,  G. Russo, {\it Self-regulated biological transportation structures with general entropy dissipation: 2D case and leaf-shaped domain},  arXiv:2408.15680, 2024.

\bibitem{Corson2010} F. Corson, {\it Fluctuations and redundancy in optimal transport networks}, Phys. Rev. Lett., 2010, 104(4): 048703, 4pp.

\bibitem{Eichmann2005} A. Eichmann, F.  Le Noble, M. Autiero, {\it et al}, {\it Guidance of vascular and neural network formation}, Curr.
Opin. Neurobiol., 2005, 15(1): 108--115.

\bibitem{Grawer2015}    J. Grawer, C. Modes,  M. Magnasco, E.  Katifori, {\it Structural self-assembly and avalanchelike dynamics in
locally adaptive networks}, Phys. Rev. E, 2015, 92(1): 012801, 16pp.

\bibitem{Trudinger1983} D. Gilbarg, N. Trudinger, Elliptic partial diffrential equations of second order. Springer-Verlag, New York, 1983.

\bibitem{Haskovec2015} J. Haskovec, P. Markowich,  B. Perthame, {\it Mathematical analysis of a PDE system for biological
network formation}, Comm. Partial Diffrential Equations, 2015, 40(5): 918--956.


\bibitem{Haskovec2016}J. Haskovec, P. Markowich, B. Perthame, M. Schlottbom, {\it Notes on a PDE system for biological
network formation}, Nonlinear Anal., 2016, 138(5): 127--155.

\bibitem{Haskovec2019CMS} J. Haskovec,  L. Kreusser, P. Markowich, {\it ODE and PDE based modeling of biological transportation networks}, Commun. Math. Sci.,  2019, 17(5): 1235--1256.

\bibitem{Haskovec2019CPDE} J. Haskovec, L. Kreusser, P. Markowich, {\it Rigorous continuum limit for the discrete network formation
problem}, Comm. Partial Diffrential Equations, 2019, 44(11): 1159--1185.

 \bibitem{Haskovec2023DCDS} J. Haskovec, P. Markowich, S. Portaro, {\it Emergence of biological transportation networks as self-regulated process}, Discrete Contin. Dyn. Syst., 2023, 43(3-4): 1499-1515.

 \bibitem{Haskovec2024Me} J. Haskovec, P. Markowich,  S. Portaro, {\it  Measure-based approach to meso-scopic modeling of optimal transportation networks}, arXiv:2401.07922, 2024.

\bibitem{Henry1981} D.Henry,  Geometric Theory of Semilinear Parabolic Equations, Lecture Note in Math., 840, Springer, 1981.

\bibitem{Hu2013phys}D. Hu, D. Cai, {\it Adaptation and optimization of biological transport networks}, Phys. Rev. Lett., 2013, 111(13): 138701, 5pp.

\bibitem{Hu2019}D. Hu, D. Cai, {\it An optimization principle for initiation and adaptation of biological transport networks,}
Commun. Math. Sci.,  2019, 17(5):  1427--1436.

\bibitem{Kirkegaard2020} J. Kirkegaard, K. Sneppen, {\it Optimal transport flows for distributed production networks}, Phys. Rev. Lett., 2020, 124(20): 208101, 5pp.

\bibitem{Katifori2010} E. Katifori, G. Szollosi, M. Magnasco, {\it Damage and uctuations induce loops in optimal transport
networks}, Phys. Rev. Lett., 2010, 104(4): 048704, 4pp.


\bibitem{Lady1968ellip} O. Lady\v{z}henskaya, N. Ural'tseva, Linear and quasilinear elliptic equations, Translated from the Russian by Scripta Technica, Inc. Translation editor: Leon Ehrenpreis, Academic Press, New York-London, 1968.


\bibitem{Lady1968}O. Ladyzhenskaya, V. Solonnikov, N. Ural'tseva, Linear and quasilinear equations of parabolic type, Translated from the Russian by S. Smith. Translations of Mathematical Monographs, Vol. 23, American Mathematical Society, Providence, R.I., 1968.

\bibitem{Li2020} B. Li, Y. Li, {\it Blow-up criterion of classical solutions for a parabolic-elliptic system in space
dimension 3}, Proc. Am. Math. Soc., 2020, 148: 2565--2578.

\bibitem{Hu2022} Y. Li, D. Hu, {\it Optimisation of biological transport networks}, East Asian J. Appl. Math., 2022, 12(1): 72--95.

\bibitem{Lieberman1996} G. Lieberman, Second order parabolic differential equations. World Scientific, Singapore, 1996.

\bibitem{Liu2018}J. Liu, X. Xu, {\it Partial regularity of weak solutions to a PDE system with cubic nonlinearity}, J. Differential Equations, 2018, 264(8): 5489--5526.

\bibitem{Malinowski2013} R. Malinowski, {\it Understanding of leaf developmentthe science of complexity}, Plants, 2013, 2(3): 396--415.

\bibitem{Morel2010}J. Morel, F. Santambrogio, {\it The regularity of optimal irrigation patterns},  Arch. Ration. Mech. Anal., 2010, 195(2): 499--531.

\bibitem{Oden1986} J. Oden, Qualitative methods in nonlinear mechanics, Prentice-Hall, Inc, New Jersey, 1986.

\bibitem{Sedmera2011}D. Sedmera, {\it Function and form in the developing cardiovascular system}, Cardiovasc. Res., 2011, 91(2): 252--259.

\bibitem{King2017} J. Taylor-King, D. Basanta,  S. Chapman,  A. Porter, {\it Mean-field approach to evolving spatial networks, with an application to osteocyte network formation}, Phys. Rev. E, 2017, 96(1): 012301, 24pp.

\bibitem{Xu2017}X. Xu, {\it Partial regularity of weak solutions and life-span of smooth solutions to a biological network formulation model}, SN Partial Differ. Equ. Appl., 2020, 1(4): 18, 31pp.

\bibitem{Xu2018}X. Xu, {\it Regularity theorems for a biological network formulation model in two space dimensions}, Kinet. Relat. Models, 2018, 11(2): 397--408.


\bibitem{Xu2019}X. Xu,   {\it  Global existence of strong solutions to a biological network formulation model in 2+1 dimensions},  Discrete Contin. Dyn. Syst., 2020, 40(11): 6309--6327.

\bibitem{Xu2023JDE}X. Xu, {\it H\"{o}lder continuity of weak solutions to
an elliptic-parabolic system modeling biological transportation network},  J. Differential Equations, 2023, 365:  882--894.

\end{thebibliography}
\end{document}